\documentclass{amsart}

\usepackage[T1]{fontenc}
\usepackage{mathptmx}
\usepackage[utf8]{inputenc}
\usepackage{amsmath,amsthm,amsfonts,amssymb}
\usepackage{xcolor}
\usepackage{dsfont}
\usepackage{mathtools}
\usepackage[all,cmtip]{xy}
\usepackage{tikz}
\usepackage{tikz-cd}
\usepackage{thmtools}
\usepackage{tracefnt}

\usepackage{hyperref}
\hypersetup{
	colorlinks=true,
	linktocpage=true,
	linkcolor=[RGB]{161,1,49},
	citecolor=[RGB]{10,136,169},      
	urlcolor=cyan,
}

\allowdisplaybreaks

\newtheorem{theorem}{Theorem}
\numberwithin{theorem}{section}

\newtheorem{corollary}[theorem]{Corollary}
\newtheorem{lemma}[theorem]{Lemma}
\newtheorem{proposition}[theorem]{Proposition}

\theoremstyle{remark}
\newtheorem{remark}{Remark}
\theoremstyle{definition}
\newtheorem{example}{Example}

\usepackage{comment}
\usepackage[all,cmtip]{xy}

\newcommand{\R}{\mathbb{R}}
\newcommand{\Z}{\mathbb{Z}}
\newcommand{\N}{\mathbb{N}}
\newcommand{\acts}{\curvearrowright}
\newcommand{\Isom}{\mathrm{Isom}}
\newcommand{\FSet}{\mathcal{F}(G)}
\newcommand{\Part}{\mathcal{P}}
\newcommand{\Maps}{\mathcal{M}}

\newcommand{\NMaps}{\mathcal{N}}

\newcommand{\AdMaps}{\Maps_\oplus}
\newcommand{\AdNMaps}{\NMaps_\oplus}
\newcommand{\AsMaps}{\overline{\AdMaps}}
\newcommand{\Cob}{\mathcal{L}}
\newcommand{\wCob}{{\overline{\Cob}}}

\newcommand{\witness}{\mathcal{R}_\varphi}

\newcommand{\vertiii}[1]{{\left\vert\kern-0.25ex\left\vert\kern-0.25ex\left\vert #1 
    \right\vert\kern-0.25ex\right\vert\kern-0.25ex\right\vert}}

\newcommand{\vertG}[1]{{\left\vert\kern-0.25ex\left\vert\kern-0.25ex\left\vert #1 
    \right\vert\kern-0.25ex\right\vert\kern-0.25ex\right\vert}_G}

\newcommand{\vertsup}[1]{{\left\vert\kern-0.25ex\left\vert\kern-0.25ex\left\vert #1 
    \right\vert\kern-0.25ex\right\vert\kern-0.25ex\right\vert}_\infty}

\newcommand{\vertK}[1]{{\left\vert\kern-0.25ex\left\vert\kern-0.25ex\left\vert #1 
    \right\vert\kern-0.25ex\right\vert\kern-0.25ex\right\vert}_\NMaps}

\title[Additive realizations of asymptotically additive set maps]{Additive realizations of asymptotically additive set maps}
\author{Raimundo Briceño}
\author{Godofredo Iommi}
\address{Facultad de Matem\'aticas, Pontificia Universidad Cat\'olica de Chile. Santiago, Chile}
\email{raimundo.briceno@uc.cl}
\urladdr{http://www.mat.uc.cl/~raimundo.briceno}
\email{godofredo.iommi@gmail.com}
\urladdr{http://www.mat.uc.cl/~giommi}

\subjclass[2020]{Primary 37A15, 46B03, 37D35; Secondary 37A60, 46B04, 37B02.}
\date{}
\keywords{Amenable group; Banach space; non-additive set map; thermodynamic formalism}

\raggedbottom

\begin{document}

\begin{abstract}
Given a countable discrete amenable group, we study conditions under which a set map into a Banach space (or more generally, a complete semi-normed space) can be realized as the ergodic sum of a vector under a group representation, such that the realization is asymptotically indistinguishable from the original map. We show that for uniformly bounded group representations, this property is characterized by the class of bounded asymptotically additive set maps, extending previous work for sequences in Banach spaces and on the case of a single non-expansive linear map. Additionally, we develop a relative version of this characterization, identifying when the additive realization can be chosen within a prescribed target set. As an application, our results generalize central aspects of thermodynamic formalism, bridging the additive and asymptotically additive frameworks.
\end{abstract}

\maketitle
\setcounter{tocdepth}{2}
\tableofcontents

\section*{Introduction}

Non-additive ergodic theory was originally introduced to study the maximal Lyapunov exponent of matrix-valued functions \cite{furstenberg1960products} and subsequently motivated by questions in percolation theory, statistical mechanics, thermodynamic formalism, dimension theory, among other areas \cite{akcoglu1981,barreira3, derriennic1975, dooley2014sub,falconer1988, hammersley1965first,krengel2011ergodic,nguyen1979ergodic2,smythe1976multiparameter}. A prominent example is the sub-additive ergodic theorem proved by Kingman in 1968 \cite[Theorem 5]{kingman1968}. Given a measure-preserving transformation $T$ on a probability space $(X,\mu)$, a sequence of real-valued functions $(f_n)_n$ in $L^1_\mu(X)$ is said to be \emph{sub-additive} if for every $n,m \in \N$ and $\mu$-almost every $x \in X$, we have $f_{n+m}(x) \leq f_n(x) + f_m(T^n x)$. Kingman's result states that if $f_1^+ \in L^1_\mu(X)$, then there exists $\overline{f} \in L^1_\mu(X)$ such that for $\mu$-almost every $x \in X$, we have
\begin{equation*}
\lim_{n \to \infty} \frac{f_n(x)}{n}  = \overline{f}(x) \quad \text{and} \quad
\lim_{n \to \infty} \frac{1}{n} \int{f_n(x)}d\mu= \int{\overline{f}(x)}d\mu.
\end{equation*}
The original proof is based on the fact that there exist $f \in L^1_\mu(X)$ and a sub-additive sequence of positive functions $(r_n)_n$ such that for $\mu$-almost every $x \in X$, we have
\begin{equation} \label{king_dec}
f_n(x) =  \sum_{i=0}^{n-1} f(T^ix) + r_n(x)
\quad \text{ and } \quad \lim_{n \to \infty} \frac{r_n(x)}{n}=0.
\end{equation}
That is, the sub-additive sequence $(f_n)_n$ is the sum of an additive sequence and a sub-linear error $\mu$-almost everywhere (see \cite[\S 5]{kingman1968}). Once a decomposition like this is obtained, the sub-additive ergodic theorem can be deduced from Birkhoff's theorem by means of the function $f$.

Of course, the decomposition in Equation \eqref{king_dec} holds in the \emph{measure-theoretic} category, since it depends on $\mu$. It turns out that, under suitable additive assumptions on the sequence $(f_n)_n$, one can obtain a decomposition analogous to that in Equation \eqref{king_dec} in the \emph{topological} category. Specifically, given a continuous transformation $T$ on a compact metric space $(X,d)$, a sequence of real-valued functions $(f_n)_n$ in $\mathcal{C}(X)$ is said to be \emph{asymptotically additive} if
\begin{equation} \label{aa-fh}
\inf_{f \in \mathcal{C}(X)} \limsup_{n \to \infty} \frac{1}{n} \left\|  f_n - \sum_{i=0}^{n-1} f \circ T^i  \right\|_{\infty} =0,
\end{equation}
where $\mathcal{C}(X)$ is the space of continuous functions and $\|\cdot\|_{\infty}$ the uniform norm. This notion was introduced by Feng and Huang \cite{feng-huang} in their study of Lyapunov spectra of certain functions. Later, Cuneo \cite[Theorem 1.2]{cuneo2020additive} proved that if $(f_n)_n$ is an asymptotically additive sequence, then there exist $f \in \mathcal{C}(X)$ and a sequence $(r_n)_n$ in $\mathcal{C}(X)$ such that, for every $x \in X$, we have
\begin{equation} \label{dec_cun}
f_n(x)= \sum_{i=0}^{n-1} f(T^ix) + r_n(x) \quad \text{and} \quad 
\lim_{n \to \infty} \frac{\|r_n\|_{\infty}}{n}=0.
\end{equation}
In other words, an asymptotically additive sequence is additive up to sublinear corrections. More generally, Cuneo proved \cite[Theorem 2.1]{cuneo2020additive} that if $(V,\|\cdot\|)$ is a Banach space and $K: V \to V$ is a non-expansive linear operator---meaning that its operator norm satisfies $\|K\|_\mathrm{op} \leq 1$---, then for any asymptotically additive sequence $(v_n)_n$ in $V$, that is,
$$
\inf_{v \in V} \limsup_{n \to \infty} \frac{1}{n}\left\|v_n-\sum_{i=0}^{n-1} K^i v\right\|=0,
$$
there exists $v \in V$ such that
$$
\lim_{n \to \infty} \frac{1}{n}\left\|v_n-\sum_{i=0}^{n-1} K^i v\right\|=0.
$$
In particular, this applies to the Koopman operator associated to $T$. As in Kingman's theorem, such a conclusion permits the reduction of several questions involving non-additive sequences to the classical additive setting.

Decomposing a non-additive phenomenon into an additive one plus a negligible error, also plays an important role in the study of ergodic properties of spatial stochastic processes. This setting usually takes place in a space of the form $X = \Omega^G$, where $\Omega$ is a topological space---such as a finite set with the discrete topology---and $G$ is a group---such as $\Z^d$. Then, for a Borel probability measure $\mu$, it is natural to consider \emph{set maps} $\varphi: \FSet \to L^1_\mu(X)$, where $\FSet$ denotes the collection of non-empty finite subsets of $G$, rather than sequences of functions $(f_n)_n$ in $L^1_\mu(X)$. Here, a set map $\varphi$ may represent a quantity such as information, volume, energy, etc., so that $\varphi(F)(x)$ denotes the amount of such quantity in the region $F$ for $x \in X$. Generalizations of Kingman's theorem and analogues of the decomposition in Equation \eqref{king_dec} have been obtained in this context; for example, see \cite[Proposition 2.1]{smythe1976multiparameter} and \cite[\S  4.19]{nguyen1979ergodic2} for the $G = \Z^d$ case. Note however that the aforementioned decomposition does not hold in general \cite[\S 5.1]{akcoglu1981} and extra assumptions, like \emph{strong sub-additivity}, are required. Although the notion of asymptotically additivity has appeared in the context of multidimensional sequences of real-valued continuous functions and $\Z^d$-actions (see \cite[Definition 5.2]{yan2013sub}), no analogue of Cuneo's theorem has been obtained.

In this article we generalize Cuneo's result in several directions. Instead of considering a single non-expansive linear map $K: V \to V$ from a Banach space $V$ to itself---which can be thought of, in the invertible case, as a representation of the group of integers $\Z$---, we consider uniformly bounded group representations $\pi: G \to \Isom(V)$ from an arbitrary countable discrete amenable group $G$ into the space $\Isom(V)$ of invertible linear operators on a complete semi-normed space $V$, the Koopman representation being the most natural example. Also, instead of sequences of functions $(f_n)_n$, we consider set maps $\varphi: \FSet \to V$. Thus, our results consider more general groups, representations, set maps, and spaces.

We believe that the correct setting to study these questions in the generality we sought is the complete semi-normed space we introduce in \S \ref{sec:space-maps}. For a countable discrete amenable group $G$, a complete semi-normed (or in particular, Banach) space $(V,\|\cdot\|)$, and a uniformly bounded representation $\pi: G \to \mathrm{Isom}(V)$, we define the complete semi-normed space  $(\Maps,\vertsup{\cdot})$ of \emph{bounded} and \emph{$G$-equivariant} set maps $\varphi$ (see \S \ref{sec:space-maps} for the precise definitions). In this context, the class of additive maps $\AdMaps$ forms a closed subspace of $\Maps$. Of course, in  such a general setting appropriate additive notions are required.  We say that a set map $\varphi \in \Maps$ is \emph{asymptotically additive} if 
\begin{equation} \label{defi_aa}
\inf_{\psi \in \AdMaps} \limsup_{F \to G}\left\|\frac{\varphi(F)}{|F|} -\frac{\psi(F)}{|F|}\right\| =0,
\end{equation}
where the limit superior is along arbitrary F{\o}lner sequences (see \S \ref{sec:limits} for definitions). It turns out that this notion is more general and encompassing than the one introduced by Feng and Huang \cite{feng-huang} (see Equation \eqref{aa-fh}) and used by Cuneo even for $\Z$-actions. Considering this, our first result is the following.

\begin{restatable}{thm}{theoremone}
\label{thm:cuneo}
Let $G$ be a countable amenable group, $(V,\|\cdot\|)$ a complete semi-normed space $\pi: G \to \mathrm{Isom}(V)$ a uniformly bounded representation. If a set map $\varphi$ is asymptotically additive, then there exists an additive set map $\psi$ such that
$$\lim_{F \to G}\left\|\frac{\varphi(F)}{|F|} -\frac{\psi(F)}{|F|}\right\| = 0.$$
\end{restatable}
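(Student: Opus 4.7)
The plan is to reformulate asymptotic additivity in functional-analytic terms and reduce the theorem to a completeness statement about the quotient of $\AdMaps$ modulo the asymptotically-null maps. Let $\vertG{\cdot}$ denote the asymptotic semi-norm $\vertG{\varphi} := \limsup_{F \to G} \|\varphi(F)\|/|F|$ on $\Maps$, so that asymptotic additivity of $\varphi$ is the statement that $\varphi$ lies in the $\vertG{\cdot}$-closure of $\AdMaps$ inside $\Maps$. If this closure coincides with $\AdMaps + \ENMaps$, where $\ENMaps := \{\eta \in \Maps : \vertG{\eta}=0\}$, then the theorem follows immediately: any decomposition $\varphi = \psi + \eta$ with $\psi \in \AdMaps$ and $\eta \in \ENMaps$ yields $\lim_{F \to G}\|\varphi(F)/|F| - \psi(F)/|F|\| = \lim_{F \to G}\|\eta(F)\|/|F| = 0$, the $\limsup$ being zero and the quantity nonnegative.

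To set up the reduction, extract from the hypothesis a sequence $(\psi_k)_k \subset \AdMaps$ with $\vertG{\varphi - \psi_k} \to 0$; by the triangle inequality this sequence is $\vertG{\cdot}$-Cauchy. By $G$-equivariance each $\psi \in \AdMaps$ is determined by $v := \psi(\{e\}) \in V$ via $\psi_v(F) = \sum_{g \in F} \pi(g)v$, and under this identification $\vertG{\psi_v} = \limsup_{F \to G}\|A_F v\|$, where $A_F := |F|^{-1}\sum_{g \in F}\pi(g)$ is the Følner averaging operator. Thus the $\vertG{\cdot}$-Cauchy condition on $(\psi_k)$ translates into an analogous Cauchy condition on the vectors $v_k := \psi_k(\{e\})$ in the semi-norm $\|v\|_\pi := \limsup_{F \to G}\|A_F v\|$ on $V$.

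The heart of the proof is then a completeness lemma: the quotient of $V$ by $\ker(\|\cdot\|_\pi)$ is complete in the induced norm. Granting this, $(v_k)$ admits a limit $v \in V$ modulo $\ker(\|\cdot\|_\pi)$, and the corresponding additive map $\psi_v$ satisfies $\vertG{\psi_{v_k} - \psi_v} \to 0$. A further triangle inequality
$$\vertG{\varphi - \psi_v} \;\leq\; \vertG{\varphi - \psi_{v_k}} + \vertG{\psi_{v_k} - \psi_v} \;\xrightarrow[k \to \infty]{}\; 0$$
produces the required $\psi := \psi_v \in \AdMaps$.

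The main obstacle is precisely this completeness lemma. The subtlety is that control of $\|A_F(v_k - v_l)\|$ uniformly for large Følner $F$ is strictly weaker than control of $\|v_k - v_l\|$, so the approximants $v_k$ need not be Cauchy in the norm of $V$; one cannot simply invoke completeness of $V$. To circumvent this I would combine two ingredients: the uniform bound on $\{\pi(g)\}_{g \in G}$, which makes $\{A_F\}_F$ uniformly bounded on $V$, and the boundedness of $\varphi$, which confines $(\varphi(F)/|F|)_F$ to a bounded subset of $V$. A diagonal extraction along a nested Følner sequence, or equivalently a Banach-limit/invariant-mean construction applied to $\Maps$ itself, should produce a candidate limit built directly from the data of $\varphi$ rather than from the $v_k$'s alone. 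Verifying that this construction genuinely lands in $V$---and not merely in a larger completion or bidual---is where the specific complete semi-normed structure of $\Maps$ developed in \S\ref{sec:space-maps} is expected to be essential.
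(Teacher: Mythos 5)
Your reduction is sound and matches the paper's setup: identify $\AdMaps$ with $V$ via $\psi \mapsto \psi(\{1_G\})$, translate the $\vertG{\cdot}$-Cauchy condition on $(\psi_k)$ into a Cauchy condition on $v_k := \psi_k(\{1_G\})$ in the asymptotic semi-norm $\|v\|_\pi := \limsup_{F \to G}\|A_F v\|$, and note that the theorem follows once the quotient of $V$ by $\ker(\|\cdot\|_\pi)$ is shown to be complete. (One small slip: the additive map attached to $v$ must be $F \mapsto \sum_{g\in F}\pi(g^{-1})v$, not $\sum_{g\in F}\pi(g)v$; with the convention $g\cdot F = Fg^{-1}$ only the former is $G$-equivariant.)

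The gap is precisely the completeness lemma, and the route you propose to close it does not obviously work. A diagonal extraction along nested F{\o}lner sets $F_n$ controls $\|A_{F_n}(v_k - v_l)\|$, but there is no reason for $A_{F_n}v_n$ to be $\|\cdot\|$-Cauchy, and even if one produced a candidate $v$ this way, estimating $\limsup_{F\to G}\|A_F(v_k - v)\|$ requires relating arbitrary invariant $F$ to the chosen $F_n$, which the operators $A_F$ do not compose to allow. An invariant-mean construction would put the candidate in a bidual, and, as you yourself flag, it is genuinely unclear how to pull it back into $V$. The paper avoids all of this with an idea your proposal does not contain: it identifies $\ker(\|\cdot\|_\pi)$ with $\wCob$, the $\|\cdot\|$-closure of the coboundary subspace $\Cob = \langle\{u - \pi(g)u\}\rangle$, and proves the two-sided estimate $\|v + \wCob\|_{\wCob} \le \inf_{F}\|A_F v\| \le \limsup_{F\to G}\|A_F v\| \le C_\pi\,\|v + \wCob\|_{\wCob}$ (Propositions \ref{prop:cob1} and \ref{prop:cob2}). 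Thus $\|\cdot\|_\pi$ is equivalent to the quotient semi-norm $\|\cdot\|_{\wCob}$; since $\wCob$ is $\|\cdot\|$-closed and $V$ is $\|\cdot\|$-complete, $V/\wCob$ is automatically a Banach space, and completeness is inherited for free with no limiting construction inside $V$. That sandwich estimate, obtained from elementary averaging bounds on coboundaries (Lemma \ref{lem:cob}), is the missing ingredient.
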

That is, we establish that infimum in Equation \eqref{defi_aa} is actually attained, generalizing in this way Cuneo's original result for single non-expansive linear maps of Banach spaces. If we abbreviate
$$
A_F v= \frac{1}{|F|}\sum_{g \in F} \pi(g^{-1})v \quad   \text{for}~v \in V,
$$
we have the following consequence.

\begin{restatable}{cor}{corollaryone}
\label{cor:cuneo}
Let $G$ be a countable amenable group, $(V,\|\cdot\|)$ a complete semi-normed space, $\pi: G \to \mathrm{Isom}(V)$ a uniformly bounded representation, and a set map $\varphi \in \Maps$. If
$$
\inf_{v \in V} \limsup_{F \to G} \left\|\frac{\varphi(F)}{|F|}-A_F v\right\| = 0,
$$
then there exists $v \in V$ such that 
\begin{equation}
\label{eqn:additive_realization}
\lim_{F \to G} \left\|\frac{\varphi(F)}{|F|}-A_F v\right\| = 0.
\end{equation}
\end{restatable}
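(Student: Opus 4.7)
The plan is to derive the corollary as a direct specialization of Theorem~\ref{thm:cuneo} by identifying the space $\AdMaps$ of additive $G$-equivariant set maps with the space $V$ itself. The key observation is that for each $v \in V$, the set map $\psi_v: \FSet \to V$ defined by $\psi_v(F) := \sum_{g \in F}\pi(g^{-1})v$ satisfies $\psi_v(F)/|F| = A_F v$ for every $F \in \FSet$, so the hypothesis of the corollary will rewrite cleanly in terms of $\psi_v$.

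First I would check that $\psi_v \in \AdMaps$: additivity under disjoint unions is immediate from the definition; $G$-equivariance follows from the representation property of $\pi$ (a routine computation yields $\psi_v(hF) = \pi(h^{-1})\psi_v(F)$, the compatibility dictated by $A_F$); and the estimate $\|\psi_v(F)/|F|\| \leq M\|v\|$, where $M := \sup_{g \in G}\|\pi(g)\|_{\mathrm{op}} < \infty$, gives boundedness. Conversely, every $\psi \in \AdMaps$ is of the form $\psi_v$ for some $v \in V$: additivity forces $\psi(F) = \sum_{g \in F}\psi(\{g\})$, so $\psi$ is determined by its values on singletons, while $G$-equivariance forces $\psi(\{g\})$ to be the prescribed $\pi$-transform of $\psi(\{e\})$; taking $v := \psi(\{e\})$ then yields $\psi = \psi_v$.

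Under this identification $V \twoheadrightarrow \AdMaps$, the two infima
$$\inf_{v \in V}\limsup_{F \to G}\left\|\frac{\varphi(F)}{|F|} - A_F v\right\| \quad \text{and} \quad \inf_{\psi \in \AdMaps}\limsup_{F \to G}\left\|\frac{\varphi(F)}{|F|} - \frac{\psi(F)}{|F|}\right\|$$
coincide, so the hypothesis of the corollary is exactly the asymptotic additivity of $\varphi$ as defined in Equation~\eqref{defi_aa}. Applying Theorem~\ref{thm:cuneo} produces some $\psi \in \AdMaps$ with $\lim_{F \to G}\|\varphi(F)/|F| - \psi(F)/|F|\| = 0$; writing $\psi = \psi_v$ for the corresponding $v \in V$ yields Equation~\eqref{eqn:additive_realization}.

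The only real obstacle is the structural identification $V \twoheadrightarrow \AdMaps$. This depends on the precise conventions for $G$-equivariance fixed in Section~\ref{sec:space-maps}, but it amounts to the elementary fact that a $G$-equivariant additive set function on $\FSet$ is determined by its value at $\{e\}$, with the group-dependence being the one prescribed by $\pi$. Once this bookkeeping is in place, the deduction from Theorem~\ref{thm:cuneo} is formal.
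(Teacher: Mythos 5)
Your proposal is correct and is essentially the same route the paper takes: the identification $v \mapsto \psi_v$ you construct is exactly the map $S$ from Proposition~\ref{prop:iso1}, and the corollary is then an immediate unwinding of Theorem~\ref{thm:cuneo} via $\psi_v(F)/|F| = A_F v$. (One small notational slip: with the paper's convention $g\cdot F = Fg^{-1}$, the equivariance identity reads $\psi_v(Fg^{-1}) = \pi(g)\psi_v(F)$ rather than $\psi_v(hF) = \pi(h^{-1})\psi_v(F)$; these agree only for abelian $G$, but this is the ``bookkeeping'' you flagged and does not affect the argument.)
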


In order to have a better understanding of the relationship between the complete semi-normed space $V$ and both additive and asymptotically additive set maps, we establish in \S \ref{sec:isom} several isomorphism theorems between adequate linear spaces. In particular, we prove that there is an identification between $V$ and the space of additive maps $\AdMaps$ (see Proposition \ref{prop:iso1}), and also between the quotient spaces $V/\wCob$ and $[\AsMaps]$ (see Theorem \ref{thm:iso4}), where $\wCob$ is the space of \emph{weak coboundaries} and $[\AsMaps]$ is an appropriate space of classes of asymptotically additive set maps.

Through the aforementioned isomorphism results, it is possible, and very important for the applications, to prove a relative version of Theorem \ref{thm:cuneo}. Given $\varphi \in \Maps$ and a subset $W \subseteq V$, we say that $\varphi$ is \emph{asymptotically additive relative to $W$} if
\begin{equation*}
\inf_{w \in W} \limsup_{F \to G} \left\|\frac{\varphi(F)}{|F|}-A_F w\right\| = 0.
\end{equation*}
We call \emph{additive realization} of $\varphi$ any element $v \in V$ that satisfies Equation \eqref{eqn:additive_realization}, and denote by $\witness$ the set of additive realizations of $\varphi$. It is of relevance to determine whether $\witness \cap W$ is empty or not. Denote $\mathcal{L}_W$ the subspace of weak $W$-coboundaries (for precise definitions and properties see \S \ref{sec:wc}). In \S \ref{sec:relative}, we prove the following result.

\begin{restatable}{thm}{theoremtwo}
\label{thm:cuneo-rel}
Let $G$ be a countable amenable group, $(V,\|\cdot\|)$ a complete semi-normed space, $\pi: G \to \mathrm{Isom}(V)$ a uniformly bounded representation, and a set map $\varphi \in \Maps$. Given a non-empty subset $W \subseteq V$, the following are equivalent:
\begin{enumerate}
    \item[(A1)] The set map $\varphi$ is asymptotically additive relative to $W$.
    \item[(A2)] There exists an additive realization $v \in \overline{W+\Cob_W}$ of $\varphi$.
\end{enumerate}
In particular, if $\varphi$ is asymptotically additive relative to $W$, then $\witness \subseteq \overline{W+\Cob}$, and the following dichotomy holds:
\begin{enumerate}
    \item[(B1)] There exists an additive realization $w \in W$ of $\varphi$.
    \item[(B2)] There exists an additive realization $v \in V \setminus (W+\wCob)$ of $\varphi$.
\end{enumerate}
\end{restatable}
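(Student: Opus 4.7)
The plan is to deduce Theorem \ref{thm:cuneo-rel} from Theorem \ref{thm:cuneo} together with the identification of $\witness$ (when non-empty) as an affine translate of $\wCob$, which follows from the isomorphism $V/\wCob \cong [\AsMaps]$ of Theorem \ref{thm:iso4}. Concretely, any two additive realizations of $\varphi$ differ by a weak coboundary, because $A_F(v_0-v_1) = A_F v_0 - A_F v_1$ tends to zero in the F\o lner limit precisely when $v_0-v_1 \in \wCob$; hence $\witness = v_0+\wCob$ for any fixed $v_0 \in \witness$.

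For (A1)$\Rightarrow$(A2): asymptotic additivity relative to $W$ is a particular case of asymptotic additivity, since for each $w \in W$ the set map $F \mapsto |F| \cdot A_F w$ is additive and $G$-equivariant. Hence Theorem \ref{thm:cuneo} supplies an additive realization $v \in V$. Pick a sequence $(w_n)_n \subseteq W$ witnessing (A1); the triangle inequality applied to
$$\|A_F(v-w_n)\| \le \left\|\frac{\varphi(F)}{|F|}-A_Fv\right\| + \left\|\frac{\varphi(F)}{|F|}-A_Fw_n\right\|$$
yields $\limsup_{F\to G}\|A_F(v-w_n)\| \to 0$ as $n \to \infty$. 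By the definition of the relative weak coboundaries in \S \ref{sec:wc}, this places $v-w_n \in \Cob_W$, so that $v = w_n + (v-w_n) \in W+\Cob_W$ and a fortiori $v \in \overline{W+\Cob_W}$.

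For (A2)$\Rightarrow$(A1): write $v = \lim_n (w_n+c_n)$ with $w_n \in W$ and $c_n \in \Cob_W \subseteq \wCob$. Since each $c_n$ is a weak coboundary, $\lim_{F \to G}\|A_F c_n\|=0$, and since $\pi$ is uniformly bounded there is a constant $C>0$ with $\|A_F(v-w_n-c_n)\| \le C\|v-w_n-c_n\|$ uniformly in $F$. Combining these with the hypothesis $\lim_{F\to G}\|\varphi(F)/|F|-A_Fv\|=0$ and then letting $n \to \infty$ gives $\inf_n \limsup_{F\to G}\|\varphi(F)/|F|-A_Fw_n\|=0$, which is exactly (A1).

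Once the equivalence is in hand, the containment and the dichotomy are formal. The realization $v_0$ produced in the first implication lies in $\overline{W+\Cob_W} \subseteq \overline{W+\wCob}$, so $\witness=v_0+\wCob \subseteq \overline{W+\wCob}$ because $\wCob$ is a linear subspace. If (B1) fails, i.e.\ $\witness \cap W = \emptyset$, then (B2) must hold: were every realization to lie in $W+\wCob$, writing $v_0=w_\ast+c_\ast$ with $w_\ast \in W$ and $c_\ast \in \wCob$ would give $w_\ast=v_0-c_\ast \in v_0+\wCob=\witness$, contradicting $\witness \cap W=\emptyset$. The main subtlety to keep an eye on is the distinction between $\Cob_W$ and $\wCob$: showing $v-w_n \in \Cob_W$ (and not merely in $\wCob$) is the step that really needs the precise definition from \S \ref{sec:wc}, and the inclusion $\Cob_W \subseteq \wCob$ invoked in the converse should be an immediate consequence of the same definition.
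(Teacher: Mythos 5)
Your (A2)$\Rightarrow$(A1) direction and your dichotomy argument are both correct and essentially match the paper. The (A1)$\Rightarrow$(A2) direction, however, has a genuine gap at the step where you write that $\limsup_{F\to G}\|A_F(v-w_n)\|\to 0$ as $n\to\infty$ ``places $v-w_n\in\Cob_W$.'' This fails on two counts. First, the averaging criterion (Corollary \ref{cor:cob}) characterizes membership in $\wCob$, not in the typically much smaller subspace $\Cob_W$; even if you knew $\limsup_{F\to G}\|A_F(v-w_n)\|=0$ for a particular $n$, the conclusion would only be $v-w_n\in\wCob$, never $v-w_n\in\Cob_W$. Second, your triangle inequality only gives $\limsup_{F\to G}\|A_F(v-w_n)\|\le\epsilon_n$ with $\epsilon_n\to 0$, which says each $v-w_n$ is \emph{close} to $\wCob$ in the quotient seminorm, not that it lies in it. Pushing this corrected version through (via Proposition \ref{prop:cob2} with $U=\Cob$) does yield $c_n\in\Cob$ with $\|v-w_n-c_n\|\to 0$, hence $v\in\overline{W+\Cob}$; but (A2) demands the strictly sharper conclusion $v\in\overline{W+\Cob_W}$, which this argument does not reach and which is required elsewhere in the theorem.

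The ingredient you are missing is precisely the isomorphism machinery of \S \ref{sec:isom}. The paper converts the estimate $\vertG{\varphi-S(w_n)}\le 2^{-n}$ into the statement that $(w_n+\wCob)_n$ is Cauchy in $V/\wCob$ (via Theorem \ref{thm:iso4} and $[\AsMaps]=[\AdMaps]$), and then invokes Proposition \ref{cor:iso3}, which encapsulates Corollary \ref{cor:LW}: on $\overline{\langle W\rangle}$ the quotient seminorms $\|\cdot\|_U$ for $\Cob_W\subseteq U\subseteq\wCob$ are all mutually equivalent (up to the factor $C_\pi$). This equivalence upgrades the $\wCob$-Cauchy condition on $(w_n)_n$ to a $\Cob_W$-Cauchy condition, and completeness of $V/\Cob_W$ then delivers a limit $v+\Cob_W$ with $v\in\overline{W+\Cob_W}$. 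Without that seminorm equivalence on $\overline{\langle W\rangle}$, there is no route from averaged estimates to $\Cob_W$-membership, so your proposal would need to be restructured along those lines to close the gap.
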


Interestingly, in this result we identify the set at which the additive realization belongs. It was an open question that naturally arose from Cuneo's work whether asymptotically additive sequences with certain regularity have an additive realization with the same regularity. That is, when  $W$ is a particular class of regular functions. This result clarifies the situation exhibiting the two possible cases.

The main application of our results is in the realm of non-additive thermodynamic formalism. This is a generalization of classical results on thermodynamic formalism replacing the pressure of a continuous function with the pressure of a sequence of  continuous functions. Falconer \cite{falconer1988} introduced this set of ideas with the purpose of studying dimension theory of non-conformal dynamical systems. A great deal of work has followed and the theory has been greatly expanded (see \cite{barreira2, feng-pablo, iommi-yuki, pesin}). This formalism can be applied to study general products of matrices and other types of cocycles \cite{cao, feng1, feng2, feng-antti}. This was the original purpose of Feng and Huang \cite{feng-huang} to introduce the notion of asymptotically additive sequences. In \S \ref{sec:thermo}, by means of Theorem \ref{thm:cuneo} and Corollary \ref{cor:cuneo}, we develop and study thermodynamic formalism for asymptotically additive set maps in the setting of countable amenable groups. We define the pressure and prove a variational principle extending the theory developed for single functions. Even for the classical case of $\Z$-actions, Theorem \ref{thm:cuneo-rel} clarifies the situation in which it was not known whether asymptotically additive sequences of regular functions (hence, having good thermodynamic properties) have an additive realization with the same regularity.

\section{Preliminaries}
In this section we collect definitions and properties that will be used throughout the article.

\subsection{Amenable groups and complete semi-normed spaces}

Let $G$ be a countable discrete group with identity element $1_G$. We denote by $\FSet$ the set of non-empty finite subsets of $G$. Given $K \in \FSet$ and $\delta > 0$, we say that $F \in \FSet$ is {\bf $(K,\delta)$-invariant} if
$$
|KF \Delta F| \leq \delta|F|,
$$
and say that $G$ is {\bf amenable} if for every $(K,\delta) \in \FSet \times \R_{> 0}$ there exists a $(K,\delta)$-invariant set $F \in \FSet$. A sequence $(F_n)_n$ in $\FSet$ is {\bf (left) F{\o}lner} for $G$ if
$$
\lim_{n \to \infty} \frac{\left|F_n \Delta gF_n\right|}{\left|F_n\right|}=0 \quad \text{for each}~g \in G.
$$
A countable discrete group is amenable if and only if it has a F{\o}lner sequence.

Given a semi-normed vector space $(V,\|\cdot\|)$, we say that a subset $A \subseteq V$ is
\begin{itemize}
    \item {\bf $\|\cdot\|$-closed} if for any sequence $(v_n)_n$ in $A$ and for any $v \in V$ such that $\|v_n - v\| \to 0$, we have that $v \in A$, and
    \item {\bf $\|\cdot\|$-complete} if for any $\|\cdot\|$-Cauchy sequence $(v_n)_n$ in $A$, there exists $v \in A$ such that $\|v_n - v\| \to 0$.
\end{itemize}
A {\bf complete semi-normed vector space} is a semi-normed vector space $(V,\|\cdot\|)$ that is $\|\cdot\|$-complete. It is well-known that a semi-normed vector space is $\|\cdot\|$-complete if and only if it is complete in the sense of nets \cite[\S 6, Theorem 24]{kelley1975}. Given a subspace $U \subseteq V$, we define the {\bf quotient space}
$$
V/U = \{v + U: v \in V\}
$$
and the corresponding quotient map $p: V \to V/U$ given by $p(v) = v + U$. The quotient topology in $V/U$ is given by the semi-norm
$$
\|v + U\|_U = \inf_{u \in U} \|v+u\|,
$$
and the map $p$ is always open and continuous with respect to this topology. Moreover, if $V$ is $\|\cdot\|$-complete and $U$ is a subspace, then $V/U$ is $\|\cdot\|_U$-complete, and the semi-norm $\|\cdot\|_U$ is a norm if and only $U$ is $\|\cdot\|$-closed; as a consequence, if $V$ is $\|\cdot\|$-complete and $U$ is $\|\cdot\|$-closed, the quotient space $V/U$ is a Banach space. See \cite[Proposition 3.1]{swartz2009}.

Given a subset $W \subseteq V$ and a subspace $U$, we denote $[W]_U = \{w + U: w \in W\} \subseteq V/U$. Notice that $[W + U]_U = [W]_U$. If the context is clear, we will omit the subindex $U$ in the bracket.

Throughout this paper, $G$ will be a countable discrete amenable group and $(V,\|\cdot\||)$ a complete semi-normed vector space.

\subsection{Representations and coboundaries}
\label{sec:wc}

Let $\Isom(V)$ denote the group of bounded invertible linear operators on $V$. A {\bf representation} of $G$ on $V$ is a group homomorphism $\pi: G \to \Isom(V)$. We say that $\pi$ is {\bf uniformly bounded} if
$$
C_\pi := \sup_{g \in G}\left\|\pi(g)\right\|_{\mathrm{op}}<\infty,
$$
where $\left\|\cdot\right\|_{\mathrm{op}}$ denotes the linear operator norm. Notice that $C_\pi \geq 1$ since $\left\|\pi(1_G)\right\|_{\mathrm{op}} = \left\|\mathrm{id}_V\right\|_{\mathrm{op}} = 1$. We say $\pi$ is {\bf isometric} if $C_\pi = 1$ or, equivalently, if $\pi(g)$ is an isometry for each $g \in G$.

Given $F \in \FSet$, we define $S_F, A_F: V \to V$, respectively,  by
$$
S_F v = \sum_{g \in F} \pi(g^{-1}) v \quad \text{and} \quad A_F v = \frac{1}{|F|} S_F v   \quad   \text{for}~v \in V. 
$$
Observe that both $S_F$ and $A_F$ are linear, and that
$$
\|A_F v\| \leq  C_\pi\|v\|  \quad   \text{and} \quad  A_{Fg^{-1}} v = \pi(g) A_F v \quad \text{for each}~g \in G.
$$

For a given subset $W \subseteq V$, consider the subspace of {\bf $W$-coboundaries}
$$
\Cob_W := \langle \{w-\pi(g)w: w \in W, g \in G\} \rangle,
$$
where $\langle\cdot\rangle$ denotes the linear span. If $W = V$, we simply write $\Cob$ instead of $\Cob_V$. Notice that $\Cob_W \subseteq \Cob \subseteq \wCob \subseteq V$, where the closure is taken with respect to the semi-norm $\|\cdot\|$. The space $\wCob$ is referred as the closed subspace of {\bf weak coboundaries}. Clearly, 
$$
\|v\| \geq \|v+\Cob_W\|_{\Cob_W} \geq \|v+\Cob\|_{\Cob} = \|v+\wCob\|_{\wCob}  \quad   \text{for every}~v \in V,
$$
where the last equality can be obtained by an standard approximation argument.

\subsection{Set maps and their limits}
\label{sec:limits}

A function of the form $\varphi: \FSet \to V$ will be called a {\bf set map}. Given $L \in V$, we write
$$
\lim_{F \to G} \varphi(F) = L
$$
if for every $\epsilon > 0$, there exists $(K,\delta) \in \FSet \times \R_{>0}$ such that for every $(K, \delta)$-invariant set $F \in \FSet$ we have $\|\varphi(F)-L\|<\epsilon$. In this case, we say that {\bf $\varphi(F)$ converges to $L$ as $F$ becomes more and more invariant}. Notice that $\FSet \times \R_{>0}$ is a directed set for the partial order $\preccurlyeq$ given by $(K,\delta) \preccurlyeq (K',\delta')$ if and only if $K \subseteq K'$ and $\delta \geq \delta'$. 

Notice that $\lim_{F \to G} \varphi(F) = L$ if and only if $\lim_{F \to G} \|\varphi(F) - L\| = 0$. For a real-valued set map $\varphi: \FSet \to \R$, we define
$$
\limsup_{F \to G} \varphi(F) := \inf_{(K,\delta)} \sup\{\varphi(F): F \textrm{ is } (K,\delta)\textrm{-invariant}\}
$$
and
$$
\liminf_{F \to G} \varphi(F) := \sup_{(K,\delta)} \inf\{\varphi(F): F \textrm{ is } (K,\delta)\textrm{-invariant}\}.
$$
Observe that, since $\FSet \times \R_{>0}$ is directed, $\liminf_{F \to G} \varphi(F) \leq \limsup_{F \to G} \varphi(F)$.

\begin{lemma}
\label{lem:limsup}
Given a set map $\varphi: \FSet \to \R$ and $L \in \R$, the following are equivalent.
\begin{enumerate}
    \item[(i)] $\lim_{F \to G} \varphi(F) = L$.
    \item[(ii)] $\lim_{n \to \infty} \varphi(F_n) = L$ for every F{\o}lner sequence $(F_n)_n$.
    \item[(iii)] $\liminf_{F \to G} \varphi(F) = L = \limsup_{F \to G} \varphi(F)$.
\end{enumerate}
\end{lemma}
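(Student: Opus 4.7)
My plan is to prove the cycle $(\text{i}) \Rightarrow (\text{ii}) \Rightarrow (\text{iii}) \Rightarrow (\text{i})$. The equivalence $(\text{i}) \Leftrightarrow (\text{iii})$ is essentially the standard characterization of convergence of a net in $\R$ along a directed set, so I would deduce it by unwinding definitions. For $(\text{i}) \Rightarrow (\text{iii})$, the $\epsilon$-$(K,\delta)$ bound from (i) directly forces $\limsup \leq L + \epsilon$ and $\liminf \geq L - \epsilon$, and sending $\epsilon \to 0$ together with the general inequality $\liminf \leq \limsup$ (which holds because $\FSet \times \R_{>0}$ is directed) gives equality at $L$. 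For $(\text{iii}) \Rightarrow (\text{i})$, given $\epsilon > 0$, I would combine the pairs $(K_1,\delta_1)$ and $(K_2,\delta_2)$ witnessing the two half-limits into $(K_1 \cup K_2 \cup \{1_G\}, \min(\delta_1,\delta_2))$, noting that adjoining $1_G$ makes $(K,\delta)$-invariance monotone in $K$: since $F \subseteq KF$ whenever $1_G \in K$, one has $|KF \Delta F| = |KF \setminus F|$, which is monotone increasing in $K$.

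For $(\text{i}) \Rightarrow (\text{ii})$, the observation is that for any fixed $K \in \FSet$ the inequality $|KF \Delta F| \leq \sum_{g \in K} |gF \Delta F|$, together with the finiteness of $K$, guarantees that every F{\o}lner sequence is eventually $(K,\delta)$-invariant, so (i) forces pointwise convergence $\varphi(F_n) \to L$.

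The main obstacle is $(\text{ii}) \Rightarrow (\text{iii})$, because here I must extract from a hypothetical failure of (iii) a single F{\o}lner sequence along which $\varphi$ fails to converge to $L$; this is where the countability of $G$ plays a role. Suppose, for contradiction, $\limsup_{F \to G} \varphi(F) > L$ (the $\liminf$ case is symmetric): then there exists $\epsilon > 0$ such that, for every $(K,\delta) \in \FSet \times \R_{>0}$, some $(K,\delta)$-invariant $F$ satisfies $\varphi(F) > L + \epsilon$. Enumerating $G = \{g_1, g_2, \ldots\}$, I would choose for each $n$ a $(\{g_1, \ldots, g_n, 1_G\}, 1/n)$-invariant set $F_n$ with $\varphi(F_n) > L + \epsilon$. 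A short calculation exploiting $g_k F_n, F_n \subseteq \{g_1, \ldots, g_n, 1_G\} F_n$ for $n \geq k$ and $|g_k F_n| = |F_n|$ then yields $|g_k F_n \Delta F_n|/|F_n| \to 0$ for each fixed $k$, so $(F_n)_n$ is a F{\o}lner sequence; this contradicts (ii) and completes the proof.
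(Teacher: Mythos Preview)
Your proof is correct and follows exactly the paper's cycle $(\text{i}) \Rightarrow (\text{ii}) \Rightarrow (\text{iii}) \Rightarrow (\text{i})$; your contradiction argument for $(\text{ii}) \Rightarrow (\text{iii})$ is just a rephrasing of the paper's direct construction of F{\o}lner sequences $F_n^{\pm}$ that approximate, within $2^{-n}$, the running $\sup$ and $\inf$ of $\varphi$ over $(K_n,\delta_n)$-invariant sets for a cofinal sequence $(K_n,\delta_n)_n$.

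There is one small slip in your $(\text{iii}) \Rightarrow (\text{i})$: the monotonicity you invoke (``$|KF\setminus F|$ is increasing in $K$'') only yields the implication ``$(K,\delta)$-invariant $\Rightarrow (K_i,\delta_i)$-invariant'' when $1_G \in K_i$ as well, which you have not arranged for the witnesses $K_1,K_2$; if $1_G\notin K_1$ then $|K_1F\mathbin{\Delta} F|$ need not equal $|K_1F\setminus F|$. The fix is easy: either observe that the $\limsup$ and $\liminf$ are unchanged upon restricting to $K\ni 1_G$ (because for any $(K_1,\delta_1)$ every $(K_1\cup\{1_G\},\delta_1/2)$-invariant set is already $(K_1,\delta_1)$-invariant, via $|K_1F\mathbin{\Delta} F|\le |K_1F\setminus F|+|F\setminus gF|$ for any $g\in K_1$), so that one may assume $1_G\in K_1,K_2$ from the start; or simply halve $\delta$ in your combined pair. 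The paper's own proof glosses over this point entirely, asserting without comment that a single $(K,\delta)$ can be chosen to satisfy both bounds, so your treatment is in fact the more careful of the two.
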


\begin{proof}
Pick a F{\o}lner sequence $(F_n)_n$ and $\epsilon > 0$. If $\lim_{F \to G} \varphi(F) = L$, then there exists $(K,\delta)$ such that $|\varphi(F)-L| < \epsilon$ for every $(K,\delta)$-invariant set $F$. Pick $n_0$ such that $F_n$ is $(K,\delta)$-invariant for every $n \geq n_0$. Then, $|\varphi(F_n)-L| < \epsilon$ for every $n \geq n_0$, so $\lim_{n \to \infty} \varphi(F_n) = L$ and we have that $(i) \implies (ii)$.

Suppose now that $\lim_{n \to \infty} \varphi(F_n) = L$ for every F{\o}lner sequence $(F_n)_n$. Pick a cofinal sequence $(K_n,\delta_n)$ and, for each $n$, pick $(K_n,\delta_n)$-invariant sets $F^-_n$ and $F^+_n$ such that
\begin{align*}
\varphi(F^-_n) - 2^{-n} &   \leq \inf\{\varphi(F): F \textrm{ is } (K_n,\delta_n)\textrm{-invariant}\}  \\
                        &   \leq \sup\{\varphi(F): F \textrm{ is } (K_n,\delta_n)\textrm{-invariant}\} \leq \varphi(F^+_n) + 2^{-n}.
\end{align*}
Taking the limit in $n$, we obtain that
\begin{align*}
\lim_{n \to \infty} \varphi(F^-_n) & \leq \lim_{n \to \infty} \inf\{\varphi(F): F \textrm{ is } (K_n,\delta_n)\textrm{-invariant}\}  \\
                        &   \leq \lim_{n \to \infty} \sup\{\varphi(F): F \textrm{ is } (K_n,\delta_n)\textrm{-invariant}\} \leq \lim_{n \to \infty} \varphi(F^+_n),
\end{align*}
and since $(F^-_n)_n$ and $(F^+_n)_n$ are F{\o}lner sequences and $(K_n,\delta_n)$ is cofinal, we conclude that
$$
L \leq \liminf_{F \to G} \varphi(F) \leq \limsup_{F \to G} \varphi(F) \leq L,
$$
so $(ii) \implies (iii)$.

Finally, suppose that $\liminf_{F \to G} \varphi(F) = L = \limsup_{F \to G} \varphi(F)$. Let $\epsilon > 0$ and pick $(K,\delta) \in \FSet \times \R_{> 0}$ such that
$$
\sup\{\varphi(F): F \textrm{ is } (K,\delta)\textrm{-invariant}\} - \epsilon \leq L \leq \inf\{\varphi(F): F \textrm{ is } (K,\delta)\textrm{-invariant}\} + \epsilon.
$$
This implies that, for every $(K,\delta)$-invariant set $F$, $\varphi(F) - \epsilon \leq L \leq \varphi(F) + \epsilon$. In other words, $|\varphi(F)-L| < \epsilon$, i.e., $\lim_{F \to G} \varphi(F) = L$, and we have that $(iii) \implies (i)$.
\end{proof}

\section{A realization theorem for asymptotically additive set maps} \label{sec:space-maps}

Fix a countable amenable group $G$, a complete semi-normed space $(V,\|\cdot\|)$, and a uniformly bounded representation $\pi: G \to \mathrm{Isom}(V)$. Notice that there are natural left actions $G \acts V$ and $G \acts \FSet$ given by $g \cdot v = \pi(g)v$ and $g \cdot F = Fg^{-1}$, respectively. Given a set map $\varphi: \FSet \to V$, we define
$$
\vertsup{\varphi} := \sup_{F \in \FSet}\left\|\frac{\varphi(F)}{|F|}\right\| \quad \text{and} \quad 
\vertG{\varphi} := \limsup_{F \to G}\left\|\frac{\varphi(F)}{|F|}\right\|,
$$
and say that $\varphi$ is {\bf bounded} if $\vertsup{\varphi} < \infty$. A map $\varphi$ is {\bf $G$-equivariant} if
$$
g \cdot \varphi(F) = \varphi(g \cdot F) \quad \text{for every $g \in G$ and $F \in \FSet$}.
$$
Notice that $\vertsup{\cdot}$ and $\vertG{\cdot}$ are semi-norms. We define the {\bf space of bounded and $G$-equivariant set maps} $\Maps$ as
$$
\Maps := \{\varphi: \FSet \to V \mid \vertsup{\varphi} < \infty \textrm{ and } \varphi \textrm{ is $G$-equivariant}\}.
$$
Observe that $\|\varphi(g \cdot F)\| \leq \vertsup{\varphi}|F|$ for every $\varphi \in \Maps$, $g \in G$, and $F \in \FSet$.

\begin{proposition}
If $(V,\|\cdot\|)$ is a Banach space (resp. complete semi-normed space), then $(\Maps,\vertsup{\cdot})$ is a Banach space (resp. complete semi-normed space).
\end{proposition}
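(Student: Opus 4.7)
The plan is to verify the two standard ingredients: (i) that $(\Maps, \vertsup{\cdot})$ is a semi-normed vector space, and (ii) that every Cauchy sequence converges.

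For (i), I would first check that $\Maps$ is a linear subspace of the space of all functions $\FSet \to V$. Boundedness is preserved under sums and scalar multiples because $\vertsup{\cdot}$ is easily seen to satisfy the triangle inequality and absolute homogeneity, each reducing to the corresponding property of $\|\cdot\|$ applied pointwise and then to the supremum. Equivariance is also preserved: since $\pi(g)$ is linear, $\pi(g)(\alpha \varphi + \beta \psi)(F) = \alpha \pi(g)\varphi(F) + \beta \pi(g)\psi(F) = \alpha \varphi(g\cdot F) + \beta \psi(g\cdot F)$. Thus $\vertsup{\cdot}$ is a semi-norm, and it is a norm whenever $\|\cdot\|$ is a norm, since then $\vertsup{\varphi}=0$ forces $\varphi(F)=0$ for every $F \in \FSet$.

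For (ii), let $(\varphi_n)_n$ be a $\vertsup{\cdot}$-Cauchy sequence in $\Maps$. The key observation is that for each fixed $F \in \FSet$,
\[
\|\varphi_n(F) - \varphi_m(F)\| \leq |F|\cdot \vertsup{\varphi_n - \varphi_m},
\]
so $(\varphi_n(F))_n$ is $\|\cdot\|$-Cauchy in $V$. By completeness, it converges to some element which we declare to be $\varphi(F)$. This defines the candidate limit set map $\varphi:\FSet \to V$.

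The remaining work is to show that $\varphi \in \Maps$ and that $\vertsup{\varphi_n - \varphi} \to 0$. Boundedness of $\varphi$ follows from the fact that Cauchy sequences are bounded in semi-norm: if $M := \sup_n \vertsup{\varphi_n} < \infty$, then $\|\varphi(F)\|/|F| \leq \|\varphi_n(F)\|/|F| + \|\varphi(F)-\varphi_n(F)\|/|F|$; the first term is at most $M$, and the second tends to $0$ as $n \to \infty$, giving $\vertsup{\varphi} \leq M$. Equivariance passes to the limit because $\pi(g)$ is a bounded linear operator, hence continuous: from $\varphi_n(g \cdot F) = \pi(g)\varphi_n(F)$ and taking the norm limit in $V$ on both sides yields $\varphi(g \cdot F) = \pi(g)\varphi(F)$. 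Finally, for convergence in $\vertsup{\cdot}$, fix $\epsilon > 0$ and pick $N$ with $\vertsup{\varphi_n - \varphi_m} < \epsilon$ for all $m, n \geq N$; this gives $\|\varphi_n(F) - \varphi_m(F)\|/|F| < \epsilon$ for every $F$, and letting $m \to \infty$ yields $\|\varphi_n(F) - \varphi(F)\|/|F| \leq \epsilon$ uniformly in $F$, so $\vertsup{\varphi_n - \varphi} \leq \epsilon$ for $n \geq N$.

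There is no real obstacle here; every step reduces to a routine transfer from $V$ to the sup-normalized setting. The only point worth flagging is that one must work pointwise in $F$ to exploit completeness of $V$, and then recover uniformity (in $F$) from the Cauchy estimate, which is the standard pattern for proving completeness of $\ell^\infty$-type spaces.
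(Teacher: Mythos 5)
Your proof is correct and follows exactly the paper's route: take a $\vertsup{\cdot}$-Cauchy sequence, observe that each evaluation $(\varphi_n(F))_n$ is $\|\cdot\|$-Cauchy in $V$, define $\varphi$ pointwise via completeness of $V$, verify $\varphi \in \Maps$ (boundedness and $G$-equivariance via continuity of $\pi(g)$), and conclude. You are somewhat more explicit than the paper on two points: the bound $\vertsup{\varphi} \le M := \sup_n \vertsup{\varphi_n}$, and the final step establishing $\vertsup{\varphi_n - \varphi} \to 0$ by letting $m \to \infty$ in the uniform Cauchy estimate, which the paper compresses into the remark that ``the convergence is uniform.''
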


\begin{proof}
It is direct that $\vertsup{\cdot}$ is a norm (resp. semi-norm) if and only if $\|\cdot\|$ is a norm (resp. semi-norm). Let $(\varphi_n)_n$ be a $\vertsup{\cdot}$-Cauchy sequence in $\Maps$. This implies that $(\varphi_n(F))_n$ is a $\|\cdot\|$-Cauchy sequence in $V$ for every $F \in \FSet$. Since $V$ is complete, for every $F \in \FSet$, there exists a limit $\varphi(F) := \lim_n \varphi_n(F)$ and we can define a set map $\varphi: \FSet \to V$ pointwise. Then, $\varphi$ belongs to $\Maps$. Indeed, $\vertsup{\varphi} < \infty$ because the convergence is uniform, and $\varphi$ is $G$-equivariant because, for every $g \in G$ and $F \in \FSet$,
$$
g \cdot \varphi(F) = \pi(g) \lim_n \varphi_n(F) = \lim_n \pi(g) \varphi_n(F) = \lim_n \varphi_n(g \cdot F) = \varphi(g \cdot F),
$$
where we have used that $\pi(g)$ is continuous.
\end{proof}

\subsection{Averages of weak coboundaries}

 Fix a semi-normed vector space $(V,\|\cdot\|)$ and a uniformly bounded representation $\pi: G \to \Isom(V)$.

\begin{lemma}
\label{lem:cob}
If $u \in \wCob$, then $\limsup_{F \to G} \left\|A_F u\right\| = 0$.
\end{lemma}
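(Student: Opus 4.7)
The plan is to establish the result in three standard stages: basic coboundaries, then the linear span $\Cob$, then the closure $\wCob$.

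First, I would handle a single coboundary $u = v - \pi(h)v$ for fixed $v \in V$ and $h \in G$. Using linearity of $A_F$ and the identity $\pi(g^{-1})\pi(h) = \pi((h^{-1}g)^{-1})$, reindexing $g \mapsto hg$ transforms the second sum in $A_F u$ into an average over $h^{-1}F$, so
$$
A_F u = \frac{1}{|F|}\left(\sum_{g \in F \setminus h^{-1}F} \pi(g^{-1})v - \sum_{g \in h^{-1}F \setminus F} \pi(g^{-1})v\right).
$$
Applying the uniform bound $\|\pi(g^{-1})v\| \leq C_\pi \|v\|$ gives $\|A_F u\| \leq C_\pi \|v\| \cdot \frac{|F \Delta h^{-1}F|}{|F|}$. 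Now taking $K = \{h^{-1}\}$ and noting that $(K,\delta)$-invariance forces $|h^{-1}F \Delta F| \leq 2\delta |F|$ (since $h^{-1}F \subseteq KF$ and $|h^{-1}F| = |F|$), this upper bound becomes arbitrarily small as $F$ becomes more and more invariant. Hence $\lim_{F \to G} \|A_F u\| = 0$ in this case.

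Next, for arbitrary $u \in \Cob$, write $u = \sum_{i=1}^n c_i (v_i - \pi(h_i)v_i)$ as a finite linear combination of basic coboundaries. By linearity of $A_F$ and the triangle inequality,
$$
\|A_F u\| \leq \sum_{i=1}^n |c_i| \cdot \|A_F(v_i - \pi(h_i)v_i)\|,
$$
and each term on the right tends to zero by the previous step, so $\lim_{F \to G} \|A_F u\| = 0$ for all $u \in \Cob$.

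Finally, for $u \in \wCob$, approximate: pick a sequence $(u_n)_n$ in $\Cob$ with $\|u_n - u\| \to 0$. Given $\epsilon > 0$, choose $n$ such that $C_\pi \|u - u_n\| < \epsilon/2$. Using the bound $\|A_F w\| \leq C_\pi \|w\|$ recorded in Section 2.2, we obtain
$$
\|A_F u\| \leq \|A_F(u - u_n)\| + \|A_F u_n\| \leq C_\pi \|u - u_n\| + \|A_F u_n\| < \tfrac{\epsilon}{2} + \|A_F u_n\|,
$$
and for $F$ sufficiently invariant the second term is also below $\epsilon/2$. This yields $\limsup_{F \to G} \|A_F u\| \leq \epsilon$ for every $\epsilon > 0$, completing the proof.

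There is no serious obstacle here: the only mildly delicate point is the telescoping/reindexing step that converts the coboundary average into a difference supported on $F \Delta h^{-1}F$, and the subsequent verification that this symmetric difference is controlled by the F{\o}lner parameter $\delta$ (accounting for the factor of 2 coming from comparing one-sided and two-sided set differences).
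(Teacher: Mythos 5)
Your proof is correct and follows essentially the same route as the paper: the telescoping/reindexing of $A_F(v-\pi(h)v)$ onto the symmetric difference $F\Delta h^{-1}F$, extension to $\Cob$ by linearity, and extension to $\wCob$ by approximation using $\|A_F w\|\leq C_\pi\|w\|$. The paper merely compresses your three stages into a single $\epsilon$-argument, choosing a finite linear combination of basic coboundaries within $\epsilon$ of $u$ at the outset; also note that with your choice $K=\{h^{-1}\}$ one has $|h^{-1}F\Delta F|=|KF\Delta F|\leq\delta|F|$ directly, so the factor of $2$ you invoke is harmless but unnecessary.
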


\begin{proof}
For every $\epsilon>0$, there exist $v_1,\dots,v_\ell \in V$, $\lambda_1,\dots,\lambda_\ell \in \R$, and $g_1,\dots,g_\ell \in G$ such that
$$
\left\|u - \sum_{i=1}^\ell \lambda_i(v_i - \pi(g_i) v_i)\right\| < \epsilon.
$$
Since $A_F$ is linear and $\left\|A_F v\right\| \leq C_\pi\left\|v\right\|$ for every $v \in V$, it follows that
\begin{align*}
\left\|A_F u\right\|    &   \leq    \left\|A_F\left(u-\sum_{i=1}^\ell \lambda_i(v_i-\pi(g_i)v_i)\right)\right\| + \left\|A_F\left(\sum_{i=1}^\ell \lambda_i(v_i-\pi(g_i) v_i)\right)\right\| \\
                        &   =    \left\|A_F\left(u-\sum_{i=1}^\ell \lambda_i(v_i-\pi(g_i)v_i)\right)\right\| + \left\|\sum_{i=1}^\ell \lambda_i A_F(v_i-\pi(g_i) v_i)\right\| \\
                        &   \leq    C_\pi\left\|u-\sum_{i=1}^\ell \lambda_i(v_i-\pi(g_i) v_i)\right\| + \sum_{i=1}^\ell |\lambda_i| \left\|A_F(v_i-\pi(g_i) v_i)\right\| \\
                        &   \leq    C_\pi\epsilon + \sum_{i=1}^\ell |\lambda_i|\frac{1}{|F|}\left\|S_Fv_i -  S_F\pi(g_i)v_i\right\| \\
                        &   =    C_\pi\epsilon + \sum_{i=1}^\ell |\lambda_i|\frac{1}{|F|}\left\|\sum_{g \in F \setminus g_i^{-1}F} \pi(g^{-1})v_i -  \sum_{g \in g_i^{-1}F \setminus F} \pi(g^{-1})v_i\right\| \\
                        &   \leq    C_\pi\epsilon + \sum_{i=1}^\ell |\lambda_i|\frac{|g_i^{-1}F \Delta F|}{|F|}C_\pi\|v_i\|.
\end{align*}
Taking the limit as $F \to G$ and then using that $\epsilon$ is arbitrary yields the result.
\end{proof}

\begin{proposition}
\label{prop:cob1}
Let $U$ be a subspace of $V$. If $U \subseteq \wCob$, then,
$$\limsup_{F \to G} \left\|A_F v\right\| \leq C_\pi\|v+U\|_U \quad \text{for every}~v \in V.
$$
\end{proposition}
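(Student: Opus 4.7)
The plan is to reduce the estimate to a pointwise bound over $u \in U$ and then invoke Lemma \ref{lem:cob}. Fix $v \in V$ and any $u \in U$. By linearity of $A_F$ we have $A_F v = A_F(v+u) - A_F u$, so the triangle inequality combined with the uniform bound $\|A_F(v+u)\| \leq C_\pi\|v+u\|$ (noted right after the definitions of $S_F$ and $A_F$) gives
\begin{equation*}
\|A_F v\| \leq C_\pi\|v+u\| + \|A_F u\|.
\end{equation*}

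Since $u \in U \subseteq \wCob$, Lemma \ref{lem:cob} yields $\limsup_{F \to G}\|A_F u\| = 0$. Taking $\limsup_{F \to G}$ on both sides therefore produces
\begin{equation*}
\limsup_{F \to G}\|A_F v\| \leq C_\pi\|v+u\|.
\end{equation*}
This inequality holds for every $u \in U$, so taking the infimum over $u \in U$ and recalling that $\|v+U\|_U = \inf_{u \in U}\|v+u\|$ yields the claim. There is essentially no obstacle beyond the cosmetic step of inserting $u$ into $A_F v$; the work has already been done in Lemma \ref{lem:cob}, and the constant $C_\pi$ appears naturally from the operator-norm bound on $A_F$ (and would drop to $1$ in the isometric case).
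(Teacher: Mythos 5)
Your proof is correct and follows essentially the same route as the paper's: decompose $A_F v = A_F(v+u) - A_F u$, apply the operator bound and Lemma \ref{lem:cob}, and optimize over $u \in U$. The only cosmetic difference is that the paper first picks $u$ realizing $\|v+U\|_U$ up to $\epsilon$ and then lets $\epsilon \to 0$, whereas you take the $\limsup$ for arbitrary $u$ and pass to the infimum at the end, which is an equivalent and arguably slightly cleaner ordering of the same steps.
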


\begin{proof}
Let $v \in V$. For every $\epsilon>0$, there exists $u \in U$ such that $\left\|v + u\right\| \leq \|v+U\|_U + \epsilon$. It follows that
$$
\left\|A_F v\right\| \leq \left\|A_F(v+u)\right\|+\left\|A_Fu\right\| \leq  C_\pi(\|v+U\|_U + \epsilon) + \left\|A_Fu\right\|.
$$
By Lemma \ref{lem:cob}, taking the limit as $F \to G$ and using that $\epsilon$ is arbitrary yields the result.
\end{proof}

\begin{proposition}
\label{prop:cob2}
Let $U$ be a subspace of $V$. If $W \subseteq V$ is a subset such that $\Cob_W \subseteq U$, then
$$
\|w+U\|_U \leq \inf_{F \in \FSet}\left\|A_F w\right\| \quad \text{for every}~w \in \overline{\langle W \rangle}.
$$
In particular,
$$
\|v+\Cob\|_\Cob \leq \inf_{F \in \FSet}\left\|A_F v\right\| \quad \text{for every}~v \in V.
$$
\end{proposition}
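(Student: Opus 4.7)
The plan is to prove the inequality first for $w \in W$, extend by linearity to $\langle W \rangle$, then by continuity to $\overline{\langle W \rangle}$, and finally take the infimum over $F$.

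First I would fix $w \in W$ and any $F \in \FSet$, and observe that
\begin{equation*}
A_F w - w = \frac{1}{|F|} \sum_{g \in F} \bigl(\pi(g^{-1}) w - w\bigr).
\end{equation*}
Each summand equals $-(w - \pi(g^{-1})w)$, which lies in $\Cob_W$ by definition (since $g^{-1} \in G$ and $\Cob_W$ is a linear subspace). Hence $A_F w - w \in \Cob_W \subseteq U$, so $A_F w$ and $w$ represent the same class in $V/U$. Consequently, using that the canonical quotient map has operator norm at most $1$,
\begin{equation*}
\|w + U\|_U = \|A_F w + U\|_U \leq \|A_F w\|.
\end{equation*}

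Next, by linearity of $A_F$ and of the map $v \mapsto v - A_F v$, the same argument applied termwise shows that for any $w = \sum_{i=1}^{\ell} \lambda_i w_i$ with $w_i \in W$ we still have $A_F w - w \in \Cob_W \subseteq U$, hence $\|w + U\|_U \leq \|A_F w\|$ for every $w \in \langle W \rangle$. To extend to $w \in \overline{\langle W \rangle}$, given $\epsilon > 0$, choose $w' \in \langle W \rangle$ with $\|w - w'\| < \epsilon$. Since $\|A_F(w - w')\| \leq C_\pi \|w - w'\|$ and $\|(w - w') + U\|_U \leq \|w - w'\|$, we get
\begin{equation*}
\|w + U\|_U \leq \|w' + U\|_U + \epsilon \leq \|A_F w'\| + \epsilon \leq \|A_F w\| + (C_\pi + 1)\epsilon.
\end{equation*}
Letting $\epsilon \to 0$ and then taking the infimum over $F \in \FSet$ yields the desired bound $\|w + U\|_U \leq \inf_{F \in \FSet} \|A_F w\|$.

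For the particular case, I would apply the general statement with $W = V$ and $U = \Cob$: the hypothesis $\Cob_W \subseteq U$ becomes $\Cob \subseteq \Cob$, which is trivially true, and $\overline{\langle V \rangle} = V$. There is no real obstacle here; the only point requiring care is verifying that the averaging identity $A_F w - w \in \Cob_W$ really does follow from the definition of $\Cob_W$ (which uses $w - \pi(g)w$ rather than $\pi(g^{-1})w - w$), but this is immediate since $\Cob_W$ is stable under negation and under replacing $g$ by $g^{-1}$.
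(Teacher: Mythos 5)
Your proof is correct and follows essentially the same route as the paper's: establish $w - A_F w \in \Cob_W \subseteq U$ for $w \in W$ via the averaging identity, extend by linearity to $\langle W \rangle$, pass to $\overline{\langle W \rangle}$ by an $\epsilon$-approximation using $\|A_F(w-w')\| \leq C_\pi\|w-w'\|$, and then take the infimum over $F$. The only cosmetic difference is the sign convention $A_F w - w$ versus $w - A_F w$, which is immaterial since $\Cob_W$ is a subspace, as you already noted.
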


\begin{proof}
If $w \in W$, observe that $A_F w \in w + U$ for each finite $F \subseteq G$, since
$$
w-A_F w = \frac{1}{|F|} \sum_{g \in F}(w - \pi(g^{-1})w) \in \Cob_W.
$$
Similarly, if $w \in \langle W \rangle $, the linearity of $A_F$ shows that $w-A_F w  \in \Cob_W$, so $A_F w \in w + U$. Then, using the definition of $\|\cdot\|_U$, we obtain that $\left\|A_F w\right\| \geq \|w+U\|_U$. Taking the infimum over $F$, we obtain that
$$
\|w+U\|_U \leq \inf_{F \in \FSet}\left\|A_F w\right\| \quad \text{for every}~w \in \langle W \rangle.
$$
Finally, if $w \in \overline{\langle W \rangle}$, then for any $\epsilon > 0$, there exists $w_\epsilon \in \langle  W \rangle$ such that $\|w - w_\epsilon\| \leq \epsilon$ and
\begin{align*}
\|w + U\|_U &   \leq \|(w-w_\epsilon) + U\|_U + \|w_\epsilon + U\|_U  \\
            &   \leq \|w-w_\epsilon\| + \inf_{F \in \FSet}\left\|A_F w_\epsilon\right\|    \\
            &   \leq \epsilon + \inf_{F \in \FSet}\left(\left\|A_F w\right\| + \left\|A_F (w-w_\epsilon)\right\|\right)    \\
            &   \leq \epsilon + \inf_{F \in \FSet}\left(\left\|A_F w\right\| + C_\pi\left\|w_\epsilon-w\right\|\right)    \\
            &   \leq \epsilon + \inf_{F \in \FSet}\left(\left\|A_F w\right\| + C_\pi\epsilon\right)    \\
            &   =    (1+C_\pi)\epsilon + \inf_{F \in \FSet}\left\|A_F w\right\|,  
\end{align*}
and since $\epsilon$ was arbitrary we conclude. The last statement follows from considering the special case $W=V$ and $U = \Cob$.
\end{proof}

\begin{corollary}
\label{cor:cob}
Given $v \in V$,  we have that $v \in \wCob$ if and only if $\limsup_{F \to G} \left\|A_F v\right\| = 0$.
\end{corollary}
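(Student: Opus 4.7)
The plan is to establish the corollary as an immediate consequence of the two preceding results, one for each implication of the equivalence.

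For the forward implication, suppose $v \in \wCob$. Then Lemma \ref{lem:cob} applies directly and yields $\limsup_{F \to G} \|A_F v\| = 0$, which is exactly what is needed.

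For the reverse implication, suppose $\limsup_{F \to G} \|A_F v\| = 0$. I would invoke the last statement of Proposition \ref{prop:cob2}, namely
\[
\|v + \Cob\|_{\Cob} \leq \inf_{F \in \FSet} \|A_F v\|.
\]
Since $\limsup_{F \to G} \|A_F v\| = 0$ implies that for every $\epsilon > 0$ there exists some $(K,\delta)$-invariant $F \in \FSet$ with $\|A_F v\| < \epsilon$, the infimum on the right-hand side is $0$. Consequently $\|v + \Cob\|_\Cob = 0$. Using the identity $\|v+\Cob\|_{\Cob} = \|v+\wCob\|_{\wCob}$ recorded at the end of \S \ref{sec:wc}, this gives $\|v + \wCob\|_{\wCob} = 0$. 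Because $\wCob$ is $\|\cdot\|$-closed by definition, the induced quotient semi-norm $\|\cdot\|_{\wCob}$ is in fact a norm on $V/\wCob$, so $\|v + \wCob\|_{\wCob} = 0$ forces $v \in \wCob$.

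There is no real obstacle here: the corollary is essentially a packaging of Lemma \ref{lem:cob} and Proposition \ref{prop:cob2} together with the identification of the two quotient semi-norms $\|\cdot\|_\Cob$ and $\|\cdot\|_{\wCob}$. The only minor point worth stating carefully is the passage from $\|v+\Cob\|_\Cob = 0$ to $v \in \wCob$, which is where the closedness of $\wCob$ (rather than of $\Cob$ itself) is needed.
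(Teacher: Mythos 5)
Your proof is correct and follows the same route as the paper's: Lemma \ref{lem:cob} gives the forward implication, and Proposition \ref{prop:cob2} combined with the identification of the quotient semi-norms $\|\cdot\|_\Cob$ and $\|\cdot\|_{\wCob}$ gives the converse, with the closedness of $\wCob$ supplying the final step from $\|v+\wCob\|_{\wCob}=0$ to $v\in\wCob$. The paper compresses your argument into a single chain of inequalities $\|v+\wCob\|_{\wCob}\le\|v+\Cob\|_{\Cob}\le\inf_F\|A_Fv\|\le\limsup_{F\to G}\|A_Fv\|=0$, but the content is identical, and your explicit remark about why closedness of $\wCob$ (rather than $\Cob$) is what makes the quotient semi-norm a norm is a point the paper leaves implicit.
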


\begin{proof}
If $v \in \wCob$, then $\limsup_{F \to G} \left\|A_F v\right\| = 0$ by Lemma \ref{lem:cob}. It remains to prove the converse. Let $v \in V$ be such that $\limsup_{F \to G} \left\|A_F v\right\| = 0$. Due to Proposition \ref{prop:cob2}, it follows that
$$
\|v+\wCob\|_{\wCob} \leq \|v+\Cob\|_{\Cob} \leq \inf_{F \in \FSet} \left\|A_F v\right\| \leq \limsup_{F \to G} \left\|A_F v\right\| = 0,
$$
so $\|v+\wCob\|_{\wCob} = 0$. Therefore, $v+\wCob = \wCob$, i.e., $v \in \wCob$.
\end{proof}

\begin{corollary}
\label{cor:LW}
If $U$ is a subspace of $V$ and $W \subseteq V$ is a subset such that $\Cob_W \subseteq U \subseteq \wCob$, then
$$
C_\pi^{-1}\limsup_{F \to G}\left\|A_F w\right\| \leq  \|w+U\|_U \leq \inf_{F \in \FSet}\left\|A_F w\right\| \leq \limsup_{F \to G}\left\|A_F w\right\| \quad   \text{for every}~w \in \overline{\langle W \rangle}.
$$
If, in addition, $\pi$ is isometric, then
$$
\|w+U\|_U = \limsup_{F \to G} \left\|A_F w\right\| =  \inf_{F \in \FSet} \left\|A_F w\right\|  \quad   \text{for every}~w \in \overline{\langle W \rangle}.
$$
\end{corollary}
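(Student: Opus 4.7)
The plan is to observe that \textbf{Corollary \ref{cor:LW}} is essentially a combination of the two previous propositions, so the proof amounts to chaining their conclusions under the hypothesis $\Cob_W \subseteq U \subseteq \wCob$.

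First, I would apply \textbf{Proposition \ref{prop:cob1}} to the subspace $U$, which is permissible since $U \subseteq \wCob$. This directly yields
$$\limsup_{F \to G}\|A_F w\| \leq C_\pi \|w+U\|_U \quad \text{for every } w \in V,$$
and in particular for every $w \in \overline{\langle W \rangle}$. Dividing by $C_\pi \geq 1$ gives the leftmost inequality of the claimed chain.

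Next, I would apply \textbf{Proposition \ref{prop:cob2}} to the pair $(W,U)$, which is permissible since $\Cob_W \subseteq U$. This gives
$$\|w+U\|_U \leq \inf_{F \in \FSet}\|A_F w\| \quad \text{for every } w \in \overline{\langle W \rangle},$$
the middle inequality. The rightmost inequality $\inf_{F \in \FSet}\|A_F w\| \leq \limsup_{F \to G}\|A_F w\|$ is immediate from the definition of $\limsup_{F \to G}$, since every $(K,\delta)$-invariant set is in $\FSet$ and so the supremum in the definition of $\limsup$ dominates the infimum of $\|A_F w\|$ over all of $\FSet$.

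For the isometric case, I would simply specialize $C_\pi = 1$. Then the leftmost and rightmost quantities of the chain coincide, forcing all three inequalities to be equalities, and the conclusion
$$\|w+U\|_U = \limsup_{F \to G}\|A_F w\| = \inf_{F \in \FSet}\|A_F w\|$$
follows for every $w \in \overline{\langle W \rangle}$. There is no real obstacle here; the entire content of the corollary is packaged in the two previous propositions, and the role of this statement is mainly to record the combined sandwich inequality that will be used later in the isomorphism results of \S\ref{sec:isom}.
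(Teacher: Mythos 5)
Your proof is correct and follows exactly the same route as the paper: the paper's proof simply says the corollary ``is direct from Proposition~\ref{prop:cob1}, Proposition~\ref{prop:cob2}, and recalling that $\pi$ being isometric means that $C_\pi = 1$,'' which is precisely the chaining you describe. The only detail you slightly elide---that for each $(K,\delta)$ there exists at least one $(K,\delta)$-invariant $F$ (amenability), so the inner supremum dominates $\inf_{F\in\FSet}\|A_F w\|$---is immediate and does not affect correctness.
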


\begin{proof}
This is direct from Proposition \ref{prop:cob1}, Proposition \ref{prop:cob2}, and recalling that $\pi$ being isometric means that $C_\pi = 1$.
\end{proof}

\subsection{Asymptotically additive set maps}

A set map $\varphi \in \Maps$ will be called {\bf additive} if
$$
\varphi(E \sqcup F) = \varphi(E) + \varphi(F)
$$
for every pair of disjoint sets $E,F \in \FSet$ or, equivalently, if
$$
\varphi(F) = \sum_{E \in \Part} \varphi(E)
$$
for every $F \in \FSet$ and for every partition $\Part$ of $F$. We will denote by $\AdMaps$ the set of additive set maps in $\Maps$. 

\begin{proposition}
The set $\AdMaps$ is a $\vertsup{\cdot}$-closed subspace of $\Maps$.
\end{proposition}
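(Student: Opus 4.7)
The plan is to verify the two required properties separately: first that $\AdMaps$ is a linear subspace of $\Maps$, and second that it is closed with respect to the semi-norm $\vertsup{\cdot}$.

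For the subspace property, I would take $\varphi,\psi \in \AdMaps$ and $\lambda,\mu \in \R$, and check directly from the definition that $\lambda\varphi + \mu\psi$ is additive: for disjoint $E,F \in \FSet$, linearity gives
$$
(\lambda\varphi + \mu\psi)(E \sqcup F) = \lambda\varphi(E \sqcup F) + \mu\psi(E \sqcup F) = \lambda(\varphi(E)+\varphi(F)) + \mu(\psi(E)+\psi(F)),
$$
which equals $(\lambda\varphi+\mu\psi)(E) + (\lambda\varphi+\mu\psi)(F)$. Clearly $\lambda\varphi + \mu\psi$ is also bounded and $G$-equivariant since these properties pass through linear combinations, so $\AdMaps$ is indeed a linear subspace.

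For closedness, the key observation is that $\vertsup{\cdot}$-convergence forces pointwise convergence on each $F \in \FSet$. Concretely, if $(\varphi_n)_n \subseteq \AdMaps$ satisfies $\vertsup{\varphi_n - \varphi} \to 0$ for some $\varphi \in \Maps$, then for every fixed $F \in \FSet$,
$$
\|\varphi_n(F) - \varphi(F)\| \leq |F| \cdot \vertsup{\varphi_n - \varphi} \xrightarrow{n \to \infty} 0.
$$
Then, given disjoint $E,F \in \FSet$, I would take the limit in the additivity identity $\varphi_n(E \sqcup F) = \varphi_n(E) + \varphi_n(F)$ to obtain $\varphi(E \sqcup F) = \varphi(E) + \varphi(F)$, which shows $\varphi \in \AdMaps$.

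There is no real obstacle here: the argument is entirely routine, and the only subtlety worth noting is the passage from uniform control via $\vertsup{\cdot}$ to pointwise convergence, which relies on the elementary estimate above. The completeness of $V$ (or of $\Maps$) is not even needed, since we are assuming the limit $\varphi$ already exists in $\Maps$.
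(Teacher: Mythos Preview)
Your proposal is correct and follows essentially the same argument as the paper: verify that $\AdMaps$ is closed under linear combinations directly from the definition of additivity, and for closedness observe that $\vertsup{\cdot}$-convergence implies pointwise convergence on each $F\in\FSet$, then pass to the limit in the identity $\varphi_n(E\sqcup F)=\varphi_n(E)+\varphi_n(F)$. Your write-up is in fact slightly more explicit than the paper's about the estimate $\|\varphi_n(F)-\varphi(F)\|\leq |F|\,\vertsup{\varphi_n-\varphi}$ and about why completeness is not needed.
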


\begin{proof}
Clearly, if $\varphi,\psi \in \AdMaps$ and $\alpha \in \R$, then $\varphi + \alpha\psi \in \AdMaps$, so $\AdMaps$ is a subspace.

If $(\varphi_n)_n$ is a sequence in $\AdMaps$ $\vertsup{\cdot}$-converging to $\varphi \in \Maps$, then $\varphi(F) = \lim_{n \to \infty} \varphi_n(F)$ for every $F \in \FSet$. In particular, we have that
\begin{align*}
\varphi(E \sqcup F) = \lim_{n \to \infty} \varphi_n(E \sqcup F) = \lim_{n \to \infty} \varphi_n(E) + \lim_{n \to \infty} \varphi_n(F) = \varphi(E) + \varphi(F)
\end{align*}
for every pair of disjoint sets $E,F \in \FSet$, so $\varphi \in \AdMaps$.
\end{proof}

Even though the subspace $\AdMaps$ is $\vertsup{\cdot}$-closed, it is of interest to study its $\vertG{\cdot}$-closure, that it usually induces a strictly larger space. A set map $\varphi \in \Maps$ will be called {\bf asymptotically additive} if
$$
\inf_{\psi \in \AdMaps} \vertG{\varphi - \psi} = 0,
$$
i.e., if it belongs to the $\vertG{\cdot}$-closure of $\AdMaps$. We denote by $\AsMaps$ the set of asymptotically additive maps in $\Maps$. It turns out that every asymptotically additive set map is asymptotically indistinguishable from an additive map, as established by the following result.

\theoremone*

In other words, Theorem \ref{thm:cuneo} establishes that for every $\varphi \in \AsMaps$, there exists $\psi \in \AdMaps$ such that $\vertG{\varphi-\psi}=0$.

\begin{proof}
Suppose that $\varphi \in \Maps$ is asymptotically additive. Then, for each $n \in \N$, there exists $\psi_n \in \AdMaps$ such that
$$
\vertG{\varphi-\psi_n} \leq 2^{-n}
$$
or, equivalently,
$$
\limsup_{F \to G}\left\|\frac{\varphi(F)}{|F|}-A_Fv_n\right\| \leq 2^{-n}
$$
for $v_n = \psi_n(\{1_G\})$. Due to Proposition \ref{prop:cob2}, for all $m,n \in \N$,
\begin{align*}
\left\|(v_n+\wCob)-(v_m+\wCob)\right\|_{\wCob} &   \leq  \limsup_{F \to G} \left\|A_Fv_n -A_Fv_m\right\|   \\
     &   \leq \limsup_{F \to G} \left\|A_F v_n - \frac{\varphi(F)}{|F|}\right\| + \limsup_{F \to G} \left\|\frac{\varphi(F)}{|F|} - A_F v_m\right\|   \\
     &   \leq    2^{-n} + 2^{-m},
\end{align*}
so $(v_n+\wCob)_n$ is as Cauchy sequence in $V/\wCob$. Since $V/\wCob$ is complete \cite[Theorem 4.8.7]{narici2011}, there exists $v \in V$ such that $\lim_n \|(v+\wCob)-(v_n+\wCob)\|_{\wCob}=0$. By Proposition \ref{prop:cob1} (see also Corollary \ref{cor:LW}),
\begin{align*}
\limsup_{F \to G}\left\| \frac{\varphi(F)}{|F|}-A_F v\right\|   &   \leq    \limsup_{F \to G} \left\|\frac{\varphi(F)}{|F|}-A_F v_n\right\| + \limsup_{F \to G} \left\|A_F(v_n-v)\right\| \\
&   \leq    2^{-n} + C_\pi\left\|(v+\wCob)-(v_n+\wCob)\right\|_{\wCob} \to 0
\end{align*}
as $n \to \infty$. Thus, we conclude that $\psi = S(v)$ is such that $\vertG{\varphi-\psi} = 0$.
\end{proof}

\corollaryone*

Any element $v \in V$ that satisfies Equation (\ref{eqn:additive_realization}) will be called an {\bf additive realization} of $\varphi$ and the set of additive realizations of $\varphi$ will be denoted by $\witness$. The next lemma describes the latter set.

\begin{lemma}
\label{lem:witness}
If $v \in V$ is an additive realization of $\varphi$, then $\witness = v + \wCob$.
\end{lemma}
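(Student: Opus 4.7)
The plan is to establish the two inclusions separately, exploiting the linearity of $A_F$ together with the characterization of weak coboundaries given in Corollary \ref{cor:cob}.

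For the inclusion $v + \wCob \subseteq \witness$, I would take any $u \in \wCob$ and set $v' = v + u$. Using linearity of $A_F$ and the triangle inequality,
$$
\left\|\frac{\varphi(F)}{|F|} - A_F v'\right\| \leq \left\|\frac{\varphi(F)}{|F|} - A_F v\right\| + \left\|A_F u\right\|.
$$
The first term tends to zero as $F \to G$ because $v$ is an additive realization of $\varphi$, and the second term tends to zero by Lemma \ref{lem:cob}. Hence $v'$ is also an additive realization, i.e., $v' \in \witness$.

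For the reverse inclusion $\witness \subseteq v + \wCob$, I would take an arbitrary $v' \in \witness$ and show that $v' - v \in \wCob$. Again by linearity of $A_F$ and the triangle inequality,
$$
\left\|A_F(v' - v)\right\| \leq \left\|A_F v' - \frac{\varphi(F)}{|F|}\right\| + \left\|\frac{\varphi(F)}{|F|} - A_F v\right\|,
$$
and both terms on the right tend to zero as $F \to G$ since both $v$ and $v'$ are additive realizations of $\varphi$. Therefore $\limsup_{F \to G}\|A_F(v'-v)\| = 0$, and Corollary \ref{cor:cob} yields $v' - v \in \wCob$, so $v' \in v + \wCob$.

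There is no real obstacle here; the statement is essentially a direct consequence of the linearity of the averaging operators $A_F$ together with the two-way characterization of $\wCob$ encapsulated in Corollary \ref{cor:cob} (namely, that $\limsup_{F \to G} \|A_F u\| = 0$ iff $u \in \wCob$). The only point worth double-checking is that $v' - v \in \wCob$ implies $v' \in v + \wCob$, which holds because $\wCob$ is a subspace and hence closed under negation.
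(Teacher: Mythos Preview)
Your proof is correct and follows essentially the same approach as the paper. The only cosmetic difference is that for the inclusion $\witness \subseteq v + \wCob$ the paper argues by contrapositive via the quotient norm on $V/\wCob$ (using Proposition~\ref{prop:cob2}), whereas you argue directly via Corollary~\ref{cor:cob}; both routes amount to the same characterization of $\wCob$.
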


\begin{proof}
If $u \in \wCob$, then, by Lemma \ref{lem:cob},
$$
\limsup_{F \to G} \left\|\frac{\varphi(F)}{|F|} - A_F(v+u)\right\| \leq \limsup_{F \to G} \left\|\frac{\varphi(F)}{|F|} - A_Fv\right\| + \limsup_{F \to G} \left\|A_Fu\right\| = 0,
$$
so, $v + \wCob \subseteq \witness$. If $v' \in \witness$ is not in $v + \wCob$, then $v' + \wCob \neq v + \wCob$, so
$$
0 < \|(v' + \wCob) - (v + \wCob)\|_{\wCob} \leq \limsup_{F \to G} \|A_Fv' - A_Fv\|.
$$
Therefore,
\begin{align*}
0   &   <  \limsup_{F \to G} \left\|A_Fv' - \frac{\varphi(F)}{|F|}\right\| + \limsup_{F \to G} \left\|\frac{\varphi(F)}{|F|} - A_Fv\right\| = \limsup_{F \to G} \left\|A_Fv' - \frac{\varphi(F)}{|F|}\right\|,
\end{align*}
so $v'$ is not an additive realization of $\varphi$.
\end{proof}

The notion of asymptotical additivity generalizes the one given by Feng and Huang \cite{feng-huang} for $\Z$-actions, the Koopman representation, and the Banach space of continuous functions (see \S \ref{sec:folner} for details). In that setting, Cuneo \cite[Theorem 1.2]{cuneo2020additive} proved a version of Corollary \ref{cor:cuneo}. Therefore, our results extend this framework to a much more general one, both in terms of the groups acting and the representations involved.

\subsection{Asymptotical additivity in the sequential case}
\label{sec:folner}

In many situations, it is common to work with families of maps $\varphi$ that are partially defined. For instance, given a F{\o}lner sequence $(F_n)_n$, we may be interested in studying limits only along it, such as
$$
\limsup_{n \to \infty}\left\|\frac{\varphi(F_n)}{|F_n|}\right\|.
$$
Here we show how our formalism is a natural and more encompassing way of studying this kind of situations.

Let $\mathcal{S}$ be a $G$-invariant subset of $\FSet$ (i.e., $G \cdot \mathcal{S} \subseteq \mathcal{S}$) such that for every $(K,\delta) \in \FSet \times \R_{> 0}$ there exists a $(K,\delta)$-invariant set $F \in \mathcal{S}$. If $(F_n)_n$ is a F{\o}lner sequence, the natural $\mathcal{S}$ to consider is the $G$-orbit of $(F_n)_n$, namely 
$$
\mathcal{S} = \{g \cdot F_n \mid g \in G, n \in \N\}.
$$
Then, any $\varphi: \{F_n\}_n \to V$ is naturally extended to $\varphi: \mathcal{S} \to V$ by inducing $G$-equivariance, this is to say,
$$
\varphi(g \cdot F_n) := g \cdot \varphi(F_n) = \pi(g) \varphi(F_n).
$$
This suggests to define, for any $G$-invariant subset $\mathcal{S}$, the {\bf set of $\mathcal{S}$-maps} as
$$
\Maps(\mathcal{S}) := \{\varphi: \mathcal{S} \to V \mid \vertiii{\varphi}_{\mathcal{S}} < \infty \textrm{ and } \varphi \textrm{ is $G$-equivariant}\},
$$
where
$$
\vertiii{\varphi}_{\mathcal{S}} := \sup_{F \in \mathcal{S}}\left\|\frac{\varphi(F)}{|F|}\right\|.
$$
Similarly, we can extend the definitions of $\lim_{F \to G}$, $\limsup_{F \to G}$, etc., by restricting all of them to sets $F \in \mathcal{S}$. In particular, asymptotical additivity takes the form
$$
\inf_{v \in V} \limsup_{\substack{F \to G\\F \in \mathcal{S}}} \left\|\frac{\varphi(F)}{|F|} - A_Fv\right\| = 0.
$$

In the case $G=\Z$, a recurrent instance is to deal with sequences $(f_n)_n$ of continuous functions $f_n: X \to \R$, where $X$ is some compact metric space and $T: X \to X$ is a homeomorphism. If we consider $\mathcal{S} = \{[m,n) \cap \Z: m < n\}$, $V = \mathcal{C}(X)$ the space of real-valued continuous functions with the uniform norm, and $\pi$ to be the Koopman representation $\pi: \Z \to \mathcal{C}(X)$ given by $\pi(-n)(f)(x) =  f(T^n x)$, we can see that this situation fits in our context and that our formalism generalizes it. Indeed, in the literature (see \cite[Chapter 7]{barreira3}, \cite{feng-huang}, \cite{holanda}, and \cite{zhao2011asymptotically}), asymptotical additivity is usually defined in this context as
$$
\inf_{f \in \mathcal{C}(X)} \limsup_{n \to \infty} \frac{1}{n}\left\|f_n - S_nf\right\|_{\infty} = 0, 
$$
where $S_nf = \sum_{i=0}^{n-1} f \circ T^i$. Notice that this coincides with the definition of asymptotical additivity given here, because
$$
\frac{1}{n}\left\|f_n - S_nf\right\|_{\infty} = \left\|\frac{\varphi([0,n))}{n} - A_{[0,n)} f\right\|_{\infty},
$$
where $\varphi([m,n) \cap \Z) = f_n \circ T^m$ for every $m < n$, and
\begin{align*}
0	&	=	\inf_{v \in V} \limsup_{\substack{F \to \Z\\F \in \mathcal{S}}} \left\|\frac{\varphi(F)}{|F|} - A_Fv\right\|_{\infty}	\\
	&	=	\inf_{f \in \mathcal{C}(X)} \limsup_{n \to \infty} \sup_{k \in \Z} \left\|\frac{\varphi([0,n) - k)}{n} - A_{[0,n)-k} f\right\|_{\infty}	\\
	&	=	\inf_{f \in \mathcal{C}(X)} \limsup_{n \to \infty} \sup_{k \in \Z} \left\|\pi(k)\frac{\varphi([0,n))}{n} - \pi(k)A_{[0,n)} f\right\|_{\infty}	\\
	&	\leq	\inf_{f \in \mathcal{C}(X)} \limsup_{n \to \infty} \sup_{k \in \Z} \|\pi(k)\|_{\mathrm{op}}\left\|\frac{\varphi([0,n))}{n} - A_{[0,n)} f\right\|_{\infty}	\\
	&	\leq	C_\pi \inf_{f \in \mathcal{C}(X)} \limsup_{n \to \infty}  \left\|\frac{\varphi([0,n))}{n} - A_{[0,n)} f\right\|_{\infty}	\\
	&	\leq	C_\pi \inf_{f \in \mathcal{C}(X)} \limsup_{n \to \infty} \sup_{k \in \Z} \left\|\frac{\varphi([0,n) - k)}{n} - A_{[0,n)-k} f\right\|_{\infty}	= 0,
\end{align*}
where the interval $[0,n)$ is understood as an interval in $\Z$.

\section{Isomorphism theorems}
\label{sec:isom}

We already established that every given asymptotically additive set map admits an additive realization, which already provides information regarding it. However, in many applications, it is of relevance to have finer information regarding the possible additive realizations. In order to better understand this problem, we study natural linear spaces that provide a framework that clarifies the interaction between asymptotically additive set maps and their additive realizations via appropriate quotients and isomorphisms of these spaces.

Let $\NMaps$ be the {\bf kernel} of $\vertG{\cdot}$, that is,
$$
\NMaps = \{\varphi \in \Maps: \vertG{\varphi} = 0\}.
$$
Note that, as a kernel, $\NMaps $ is a $\vertG{\cdot}$-closed and $\vertG{\cdot}$-complete subspace of $\Maps$. We have that a subspace $\mathcal{U}$ is $\vertG{\cdot}$-closed only if $\NMaps \subseteq \mathcal{U}$.  In particular, $\Maps$ is $\vertG{\cdot}$-complete if and only if $\Maps/\NMaps$ is a Banach space. See \cite[Proposition 3.2]{swartz2009} for further details.

\subsection{A characterization of the space of additive set maps}

Let $\AdNMaps$ be the space of additive maps in the kernel of $\vertG{\cdot}$, i.e., $\NMaps \cap \AdMaps$. This space is $\vertsup{\cdot}$-closed since it is the intersection of two $\vertsup{\cdot}$-closed subspaces. Consider the map $S: V \to \Maps$ given by $S(v)(F) = S_Fv$. 

\begin{proposition}
\label{prop:iso1}
The map $S$ is a linear isomorphism between $(V,\|\cdot\|)$ and $(\AdMaps,\vertsup{\cdot})$. Moreover, if $\pi$ is isometric, then $S$ is an isometry.
\end{proposition}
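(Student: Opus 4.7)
The plan is a direct verification of linearity, bijectivity, bicontinuity, and norm-preservation, organized around an explicit inverse. First I would confirm that $S$ maps into $\AdMaps$: the definition $S(v)(F) = \sum_{g \in F} \pi(g^{-1})v$ is manifestly linear in $v$, the sum splits over disjoint unions so $S(v)$ is additive, a short reindexing $g = g' h^{-1}$ gives $S(v)(Fh^{-1}) = \pi(h) S(v)(F)$, and the uniform bound $\|\pi(g^{-1})\|_{\mathrm{op}} \leq C_\pi$ yields $\vertsup{S(v)} \leq C_\pi \|v\|$. In particular, $S$ is a bounded linear map $V \to \AdMaps$.

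The inverse will be the evaluation map $T: \AdMaps \to V$ defined by $T(\psi) = \psi(\{1_G\})$. That $T \circ S = \mathrm{id}_V$ is immediate from $S(v)(\{1_G\}) = \pi(1_G)v = v$. For $S \circ T = \mathrm{id}_{\AdMaps}$, given $\psi \in \AdMaps$ and $v = \psi(\{1_G\})$, the key step uses the convention $g \cdot F = Fg^{-1}$ to note $g^{-1} \cdot \{1_G\} = \{g\}$, so $G$-equivariance gives
\[
\psi(\{g\}) = \pi(g^{-1}) \psi(\{1_G\}) = \pi(g^{-1}) v;
\]
then additivity along the partition of any $F \in \FSet$ into singletons yields $\psi(F) = \sum_{g \in F} \pi(g^{-1}) v = S(v)(F)$. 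Continuity of $T$ is equally easy since $\|T(\psi)\| = \|\psi(\{1_G\})\| \leq \vertsup{\psi}$, so $S$ is a topological linear isomorphism.

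For the isometric statement, assume $C_\pi = 1$. The same triangle-inequality bound as in the first step gives $\vertsup{S(v)} \leq \|v\|$, and evaluating at $F = \{1_G\}$ produces $\|S(v)(\{1_G\})\| = \|\pi(1_G) v\| = \|v\|$, so the supremum is attained and $\vertsup{S(v)} = \|v\|$. The only nontrivial bookkeeping in the whole argument is keeping the left-action convention straight when passing from singletons $\{g\}$ back to $\{1_G\}$ in the surjectivity step; once that substitution is handled, everything else is formal.
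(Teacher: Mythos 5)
Your argument is correct and follows essentially the same route as the paper's: show $S$ lands in $\AdMaps$ with $\vertsup{S(v)} \le C_\pi\|v\|$, exhibit the evaluation map $\psi \mapsto \psi(\{1_G\})$ as the continuous two-sided inverse (surjectivity via $\psi(F) = \sum_{g\in F}\psi(\{g\}) = S_F\psi(\{1_G\})$), and in the isometric case squeeze $\vertsup{S(v)}$ between $\|v\|$ and $C_\pi\|v\|$. The only minor addition on your side is spelling out the $G$-equivariance of $S(v)$ via reindexing, which the paper leaves implicit through the preliminary identity $A_{Fg^{-1}}v = \pi(g)A_Fv$.
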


\begin{proof}
Notice that $S$ is injective, linear, and continuous since
$$
S(v_1) = S(v_2) \implies v_1 = S(v_1)(\{1_G\}) = S(v_2)(\{1_G\}) = v_2,
$$
$$
S(v_1 + \alpha v_2)(F) = S_F(v_1 + \alpha v_2) = S_Fv_1 + \alpha S_Fv_2 = S(v_1)(F) + \alpha S(v_2)(F),
$$
and
$$
\vertsup{S(v)} = \sup_{F \in \FSet}\|A_F v\| \leq \sup_{F \in \FSet} C_\pi \|v\| = C_\pi \|v\|.
$$
Moreover, $S(v) \in \AdMaps$ for every $v \in V$, since
$$
S(v)(E \sqcup F) = S_{E \sqcup F}v = S_Ev + S_Fv = S(v)(E) + S(v)(F).
$$

For $\psi \in \AdMaps$, we have that
$$
\psi(F) = \sum_{g \in F} \psi(\{g\}) = \sum_{g \in F} \pi(g^{-1}) \psi(\{1_G\}) = S_F \psi(\{1_G\}) = S(\psi(\{1_G\}))(F), 
$$
so $S(\psi(\{1_G\})) = \psi$, and then $S(V) = \AdMaps$. In particular, the inverse $S^{-1}: \AdMaps \to V$ given by $S^{-1}(\psi) = \psi(\{1_G\})$ is also continuous, since
$$
\|S^{-1}(\psi)\| = \|\psi(\{1_G\})\| \leq \vertsup{\psi}.
$$
In other words, $S$ is a linear isomorphism between $V$ and $\AdMaps$. Moreover, if $\pi$ is isometric (i.e., if $C_\pi = 1$), $S$ is an isometry, since
$$
\|v\| = \|S^{-1}(S(v))\| \leq \vertsup{S(v)} = \sup_{F \in \FSet} \frac{\|S(v)(F)\|}{|F|} = \sup_{F \in \FSet}\|A_F v\| \leq \|v\|.
$$
\end{proof}

From the identification between $(V,\|\cdot\|)$ and $(\AdMaps,\vertsup{\cdot})$, we can see that
$$
\psi \in \AdNMaps \iff \limsup_{F \to G} \|A_F \psi(\{1_G\})\| = 0 \iff S^{-1}(\psi) \in \wCob.
$$
In other words, $S^{-1}(\NMaps \cap \AdMaps) = \wCob$ or, equivalently, $S(\wCob) = \AdNMaps$. Also, observe that $(V/\wCob,\|\cdot\|_{\wCob})$ is a Banach space because $\wCob$ is closed in $V$. Combining these observations, we obtain the following.

\begin{corollary}
\label{cor:iso2}
The restriction $S:\wCob \to \AdNMaps$ is a linear isomorphism between the subspaces $(\wCob,\|\cdot\|)$ and $(\AdNMaps,\vertsup{\cdot})$. Moreover, the linear transformation given by $\overline{S}(v + \wCob) = S(v) + \AdNMaps$ is a linear isomorphism between $(V/\wCob,\|\cdot\|_{\wCob})$ and $(\AdMaps/\AdNMaps,\vertiii{\cdot}_{\AdNMaps})$, where $\vertiii{\cdot}_{\AdNMaps}$ is the quotient norm. In particular, $(\AdMaps/\AdNMaps,\vertiii{\cdot}_{\AdNMaps})$ is a Banach space.
\end{corollary}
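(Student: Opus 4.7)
The plan is to bootstrap everything from Proposition~\ref{prop:iso1} and the identity $S(\wCob) = \AdNMaps$ already recorded just above the statement. The proof of Proposition~\ref{prop:iso1} actually establishes the two-sided bound $\|v\| \leq \vertsup{S(v)} \leq C_\pi\|v\|$ for every $v \in V$ (the left inequality coming from $\|S^{-1}(\psi)\| \leq \vertsup{\psi}$ applied to $\psi = S(v)$, the right one from $\|A_F v\| \leq C_\pi \|v\|$), so $S$ is a bicontinuous linear bijection. Restricting $S$ to the subspace $\wCob$ therefore yields a bicontinuous linear bijection onto $S(\wCob) = \AdNMaps$, proving the first statement.

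For the second statement, I define $\overline{S}(v+\wCob) := S(v) + \AdNMaps$ and check the standard items. Well-definedness is immediate: if $v_1 - v_2 \in \wCob$, then $S(v_1) - S(v_2) = S(v_1 - v_2) \in S(\wCob) = \AdNMaps$. Linearity is inherited from $S$; surjectivity from $S(V) = \AdMaps$; and injectivity from the fact that $\overline{S}(v + \wCob) = \AdNMaps$ forces $S(v) \in \AdNMaps = S(\wCob)$, whence $v \in \wCob$ by injectivity of $S$.

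To see that $\overline{S}$ is bicontinuous, I rewrite the quotient norm using that $\wCob$ is a subspace and $S(\wCob) = \AdNMaps$:
\begin{equation*}
\vertiii{\overline{S}(v+\wCob)}_{\AdNMaps} \;=\; \inf_{u \in \wCob}\vertsup{S(v+u)}.
\end{equation*}
Applying the two-sided bound $\|w\| \leq \vertsup{S(w)} \leq C_\pi\|w\|$ with $w = v+u$ and taking the infimum over $u \in \wCob$ sandwiches the right-hand side between $\|v+\wCob\|_{\wCob}$ and $C_\pi\|v+\wCob\|_{\wCob}$, so both $\overline{S}$ and $\overline{S}^{-1}$ are continuous. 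The final assertion then follows at once: since $(V/\wCob,\|\cdot\|_{\wCob})$ is a Banach space---as already noted, because $\wCob$ is $\|\cdot\|$-closed in the complete $V$---the bicontinuous linear bijection $\overline{S}$ transports this structure to $(\AdMaps/\AdNMaps,\vertiii{\cdot}_{\AdNMaps})$. The whole argument is essentially bookkeeping on top of Proposition~\ref{prop:iso1}; the only point to watch is that $\overline{S}$ must be shown bicontinuous, and not merely an algebraic isomorphism, so that completeness genuinely transfers under $\overline{S}$.
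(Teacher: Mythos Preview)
Your proof is correct and follows essentially the same route as the paper: both hinge on Proposition~\ref{prop:iso1} together with the identification $S(\wCob)=\AdNMaps$ recorded just before the corollary, and then pass to the quotient. Your version is slightly more explicit in verifying the bicontinuity of $\overline{S}$ via the two-sided bound $\|w\|\leq\vertsup{S(w)}\leq C_\pi\|w\|$, whereas the paper simply invokes the general principle that a linear isomorphism restricting to one between subspaces descends to one between the quotients; but this is a difference of presentation, not of substance.
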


\begin{proof}
First, notice that the restriction $S:\wCob \to \AdNMaps$ is well-defined, since if $u \in \wCob$, then $S(u) \in \AdMaps$ and, due to Corollary \ref{cor:cob},
$$
\vertG{S(u)} = \limsup_{F \to G} \left\|\frac{S(u)(F)}{|F|}\right\| = \limsup_{F \to G} \|A_F(u)\| = 0,
$$
so $S(u) \in \AdNMaps$. Since $S$ is a restriction of a linear isomorphism, it suffices to prove that $S^{-1}(\psi)$ belongs to $\wCob$ for any $\psi \in \AdNMaps$. Indeed, 
$$
\limsup_{F \to G}\|A_F S^{-1}(\psi)\| = \limsup_{F \to G}\|A_F \psi(\{1_G\})\| = \limsup_{F \to G}\left\|\frac{\psi(F)}{|F|}\right\| = \vertG{\psi} = 0,
$$
and, again by Corollary \ref{cor:cob}, we conclude that $S^{-1}(\psi) \in \wCob$, so $(\wCob,\|\cdot\|)$ and $(\AdNMaps,\vertsup{\cdot})$ are linearly isomorphic.

Finally, by Proposition \ref{prop:iso1}, the map $S$ is a linear isomorphism between $(V,\|\cdot\|)$ and $(\AdMaps,\vertsup{\cdot})$. Since the restriction of $S$ is also a linear isomorphism between the corresponding subspaces $(\wCob,\|\cdot\|)$ and $(\AdNMaps,\vertsup{\cdot})$, we obtain that the map $\overline{S}$ induced by $S$ is a linear isomorphism between the corresponding quotient spaces $(V/\wCob,\|\cdot\|_{\wCob})$ and $(\AdMaps/\AdNMaps,\vertiii{\cdot}_{\AdNMaps})$, and since the former is a Banach space, we conclude that so is the latter.
\end{proof}

\begin{remark}
Notice that all the linear transformations involved in Corollary \ref{cor:iso2} are isometries if we assume that $\pi$ is isometric.
\end{remark}

\subsection{Asymptotical completeness of the space of set maps}

We now prove that the space of set maps is complete in an asymptotical sense, that is, with respect to the semi-norm $\vertG{\cdot}$.

\begin{proposition}
\label{prop:cauchy}

The space $\Maps$ is $\vertG{\cdot}$-complete. More specifically, if $(\varphi_m)_m$ is a $\vertG{\cdot}$-Cauchy sequence in $\Maps$, then there exist a subsequence $(\varphi_n)_n$ of $(\varphi_m)_m$ and an increasing cofinal sequence $((K_n,\delta_n))_n$ in $\FSet \times \R_{>0}$ such that
$$
\vertG{\varphi - \varphi_n} \to 0,
$$
where
$$
\varphi = \sum_n \mathds{1}_{\mathcal{I}_n}\varphi_n \in \Maps
$$
and $(\mathcal{I}_n)_n$ is the partition of $\FSet$ given by
$$
\mathcal{I}_n = \{F \in \FSet: F \textrm{ is $(K_n,\delta_n)$-invariant but not $(K_{n+1},\delta_{n+1})$-invariant}\}.
$$
\end{proposition}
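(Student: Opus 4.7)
The plan is to combine three ingredients: a rapidly $\vertG{\cdot}$-convergent subsequence, an inductive construction of an increasing cofinal sequence $((K_n,\delta_n))_n$ of invariance parameters, and then the piecewise definition of $\varphi$ using the partition $(\mathcal{I}_n)_n$. First, extracting a subsequence (which I relabel as $(\varphi_n)_n$) of the Cauchy sequence, I can assume $\vertG{\varphi_{n+1}-\varphi_n} \leq 2^{-(n+2)}$ for all $n \geq 1$. Since $\vertG{\cdot}$-Cauchyness implies that $(\vertG{\varphi_n})_n$ is bounded, set $C := \sup_n \vertG{\varphi_n} < \infty$.

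Next, I would build $((K_n, \delta_n))_n$ inductively, starting with $(K_1,\delta_1) = (\{1_G\}, 1)$, so that every $F \in \FSet$ is $(K_1,\delta_1)$-invariant. Enumerate $\FSet \times \mathbb{Q}_{> 0}$ as $\{(H_j, \eta_j)\}_{j \geq 1}$. For $n \geq 2$, pick $(K_n, \delta_n)$ with $K_{n-1} \cup H_n \subseteq K_n$ and $\delta_n \leq \min(\delta_{n-1}, \eta_n, 2^{-n})$, guaranteeing that the sequence is $\preccurlyeq$-increasing and cofinal; I would then enlarge $K_n$ and shrink $\delta_n$ further if necessary so that for every $(K_n,\delta_n)$-invariant $F$,
$$
\frac{\|(\varphi_n - \varphi_{n-1})(F)\|}{|F|} < 2^{-n} \quad \text{and} \quad \frac{\|\varphi_n(F)\|}{|F|} < \vertG{\varphi_n} + 1,
$$
both conditions being achievable by the definition of $\vertG{\cdot}$ as an infimum over invariance parameters. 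The sets $\mathcal{I}_n$ then partition $\FSet$: each $F$ is $(K_1,\delta_1)$-invariant, while for $n$ large enough one has $\delta_n < 2$ and some $g \in K_n$ with $gF \cap F = \emptyset$, violating $(K_n,\delta_n)$-invariance; hence there is a unique maximal $n$ with $F \in \mathcal{I}_n$.

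Define $\varphi(F) := \varphi_n(F)$ for $F \in \mathcal{I}_n$. To verify $\varphi \in \Maps$: the action $F \mapsto Fg^{-1}$ preserves $(K,\delta)$-invariance since $|K(Fg^{-1}) \Delta Fg^{-1}| = |KF \Delta F|$ and $|Fg^{-1}| = |F|$, so each $\mathcal{I}_n$ is $G$-invariant and $\varphi$ inherits $G$-equivariance from the $\varphi_n$. For boundedness, on $\mathcal{I}_1$ I use $\|\varphi_1(F)/|F|\| \leq \vertsup{\varphi_1} < \infty$, while on $\mathcal{I}_n$ with $n \geq 2$ the construction gives $\|\varphi_n(F)/|F|\| \leq C+1$; hence $\vertsup{\varphi} < \infty$. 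Finally, fixing $n$ and any $(K_n,\delta_n)$-invariant $F$, monotonicity of the parameters forces $F \in \mathcal{I}_m$ for some $m \geq n$ and $F$ to be $(K_k,\delta_k)$-invariant for every $n \leq k \leq m$. A telescoping estimate then gives
$$
\frac{\|\varphi(F)-\varphi_n(F)\|}{|F|} \leq \sum_{k=n}^{m-1} \frac{\|(\varphi_{k+1}-\varphi_k)(F)\|}{|F|} \leq \sum_{k=n}^{m-1} 2^{-(k+1)} < 2^{-n},
$$
so $\vertG{\varphi - \varphi_n} \leq 2^{-n} \to 0$. I expect the main obstacle to be the simultaneous bookkeeping---monotonicity, cofinality, the decay bound on $\varphi_{n}-\varphi_{n-1}$, and the uniform bound on $\varphi_n$---in the inductive construction of $((K_n, \delta_n))_n$, together with checking that $(\mathcal{I}_n)_n$ genuinely partitions $\FSet$.
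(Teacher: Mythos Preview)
Your proposal is correct and follows essentially the same approach as the paper: extract a rapidly convergent subsequence, inductively build an increasing cofinal sequence $((K_n,\delta_n))_n$ controlling both the increments $\varphi_n-\varphi_{n-1}$ and the size of $\varphi_n$ on sufficiently invariant sets, define $\varphi$ piecewise on the resulting partition, and conclude via a telescoping estimate. Your write-up is in fact slightly more explicit than the paper's in two places---you spell out why $(\mathcal{I}_n)_n$ genuinely partitions $\FSet$ (using cofinality to force eventual non-invariance of any fixed $F$) and why each $\mathcal{I}_n$ is $G$-invariant (since right translation preserves $(K,\delta)$-invariance)---both of which the paper leaves implicit; the only cosmetic slip is that in the partition argument you should require $\delta_n<1$ rather than $\delta_n<2$, but your construction already forces $\delta_n\leq 2^{-n}$ for $n\geq 2$, so this is harmless.
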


\begin{proof}
Let $(\varphi_m)_m$ be a $\vertG{\cdot}$-Cauchy sequence in $\Maps$. Then, there exists a constant $C > 0$ such that $\vertG{\varphi_m} \leq C$ for all $m$. Consider a subsequence $(\varphi_n)_n$ such that
$$
\vertG{\varphi_{n+1} - \varphi_n} \leq 2^{-n} \quad \text{for every}~n.
$$

Observe that
$$
\vertG{\varphi_n} = \inf_{(K,\delta)} \sup \left\{\left\|\frac{\varphi_n(F)}{|F|}\right\|: F \textrm{ is $(K,\delta)$-invariant}\right\}
$$
and let $(K_0,\delta_0) = (1_G,1)$. For $n \geq 1$, inductively construct a cofinal sequence $((K_n,\delta_n))_n$ in $\FSet \times \R_{>0}$ such that $K_{n+1} \subsetneq K_n$, $\delta_{n+1} < \delta_n$,
\begin{enumerate}
\item $\sup \left\{\left\|\frac{\varphi_n(F)}{|F|}\right\|: F \textrm{ is $(K_n,\delta_n)$-invariant}\right\} \leq \vertG{\varphi_n} + 2^{-n}$, and
\item $\sup \left\{\left\|\frac{\varphi_{n+1}(F) - \varphi_n(F)}{|F|}\right\|: F \textrm{ is $(K_n,\delta_n)$-invariant}\right\} \leq \vertG{\varphi_{n+1} - \varphi_n} + 2^{-n}$.
\end{enumerate}

The sequence $((K_n,\delta_n))_n$ defines a partition $(\mathcal{I}_n)_n$ of $\FSet$, where $\mathcal{I}_n$ is the set of subsets $F \in \FSet$ that are $(K_n,\delta_n)$-invariant and not $(K_{n+1},\delta_{n+1})$-invariant. Considering this, define $\varphi: \FSet \to V$ given by
$$
\varphi = \sum_n \mathds{1}_{\mathcal{I}_n}\varphi_n.
$$
Notice that $\|\varphi(F)\| = \sum_n \mathds{1}_{\mathcal{I}_n}(F)\|\varphi_n(F)\|$ and $\varphi$ is bounded since
\begin{align*}
\vertsup{\varphi}   &   =       \sup_{F \in \FSet} \sum_n \mathds{1}_{\mathcal{I}_n}(F)\left\|\frac{\varphi_n(F)}{|F|}\right\|    \\
                    &   =       \sup_n \sup\left\{\left\|\frac{\varphi_n(F)}{|F|}\right\|: F \in \mathcal{I}_n\right\}   \\
                    &   \leq     \sup_n \sup\left\{\left\|\frac{\varphi_n(F)}{|F|}\right\|: F \textrm{ is $(K_n,\delta_n)$-invariant}\right\}   \\
                    &   \leq      \sup_n \vertG{\varphi_n} + 2^{-n}  \leq      C + 1 < \infty,
\end{align*}
Moreover, $\varphi$ is $G$-invariant, since for $F \in \mathcal{I}_n$ and $g \in G$, we have that $g \cdot F \in \mathcal{I}_n$ and
$$
g \cdot \varphi(F) = g \cdot \varphi_n(F) = \varphi_n(Fg^{-1}) = \varphi(g \cdot F),
$$
so $\varphi \in \Maps$. Note that $(\varphi_n)_n$ converges to $\varphi$. Indeed, given $n_0 \in \N$,
\begin{align*}
\vertG{\varphi - \varphi_{n_0}}   &   =       \inf_{(K,\delta)} \sup \left\{\sum_k\mathds{1}_{\mathcal{I}_k}(F)\left\|\frac{\varphi_k(F) - \varphi_{n_0}(F)}{|F|}\right\|: F \textrm{ is $(K,\delta)$-invariant}\right\}    \\
                    &   =       \inf_{n \geq n_0} \sup \left\{\sum_k \mathds{1}_{\mathcal{I}_k}(F)\left\|\frac{\varphi_k(F) - \varphi_{n_0}(F)}{|F|}\right\|: F \textrm{ is $(K_n,\delta_n)$-invariant}\right\}    \\
                    &   =       \inf_{n \geq n_0} \sup \left\{\sum_{k \geq n} \mathds{1}_{\mathcal{I}_k}(F)\left\|\frac{\varphi_k(F) - \varphi_{n_0}(F)}{|F|}\right\|: F \textrm{ is $(K_n,\delta_n)$-invariant}\right\}    \\
                    &   =        \inf_{n \geq n_0} \sup_{k \geq n} \sup \left\{\left\|\frac{\varphi_k(F) - \varphi_{n_0}(F)}{|F|}\right\|: F \in \mathcal{I}_k\right\}    \\
                    &   \leq        \inf_{n \geq n_0} \sup_{k \geq n_0} \sup \left\{\left\|\frac{\varphi_k(F) - \varphi_{n_0}(F)}{|F|}\right\|: F \in \mathcal{I}_k\right\}    \\
                    &   =       \sup_{k \geq n_0} \sup \left\{\left\|\frac{\varphi_k(F) - \varphi_{n_0}(F)}{|F|}\right\|: F \in \mathcal{I}_k\right\}    \\
                     &   \leq    \sup_{k \geq n_0} \sup \left\{\sum_{i = n_0}^{k-1} \left\|\frac{\varphi_{i+1}(F) - \varphi_i(F)}{|F|}\right\|: F \in \mathcal{I}_k\right\}    \\
                    &   \leq    \sup_{k \geq n_0} \sum_{i = n_0}^{k-1} \sup \left\{\left\|\frac{\varphi_{i+1}(F) - \varphi_i(F)}{|F|}\right\|: F \textrm{ is $(K_k,\delta_k)$-invariant}\right\}    \\
                    &   \leq    \sup_{k \geq n_0} \sum_{i = n_0}^{k-1} \sup \left\{\left\|\frac{\varphi_{i+1}(F) - \varphi_i(F)}{|F|}\right\|: F \textrm{ is $(K_i,\delta_i)$-invariant}\right\}    \\
                    &   \leq    \sup_{k \geq n_0} \sum_{i = n_0}^{k-1} 2^{-i+1}  = \inf_{n \geq n_0} \sum_{i = n_0}^{\infty} 2^{-i+1}  = \inf_{n \geq n_0} 2^{-n_0+2} = 2^{-n_0+2}. 
\end{align*}
Letting $n_0 \to \infty$, we conclude. Since $(\varphi_m)_m$ was an arbitrary $\vertG{\cdot}$-Cauchy sequence, the space $\Maps$ is $\vertG{\cdot}$-complete.
\end{proof}

Since $\NMaps$ is the kernel of $\vertG{\cdot}$, the semi-norm $\vertG{\cdot}$ induces a norm $\vertK{\cdot}$ on the quotient space $\Maps/\NMaps$ given by
$$
\vertK{\varphi + \NMaps} = \vertG{\varphi},
$$
where $\varphi$ is any representative of the class $\varphi + \NMaps$. A direct consequence of Proposition \ref{prop:cauchy} is the following.

\begin{corollary}
The space $(\Maps/\NMaps, \vertK{\cdot})$ is a Banach space.
\end{corollary}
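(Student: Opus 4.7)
The plan is to deduce the corollary directly from Proposition \ref{prop:cauchy}, combined with the standard fact that the quotient of a complete semi-normed space by the kernel of its semi-norm is a Banach space (this is exactly the citation \cite[Proposition 3.2]{swartz2009} already mentioned in the preliminaries). There is essentially no new content to be proved; the work has been done in Proposition \ref{prop:cauchy}, which establishes the $\vertG{\cdot}$-completeness of $\Maps$.

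First I would verify that $\vertK{\cdot}$ is a well-defined norm. Since $\NMaps$ is the kernel of the semi-norm $\vertG{\cdot}$, for any $\varphi \in \Maps$ and $\psi \in \NMaps$ one has $\vertG{\varphi + \psi} = \vertG{\varphi}$, so the assignment $\vertK{\varphi + \NMaps} := \vertG{\varphi}$ is independent of the chosen representative and agrees with the usual infimum definition of the quotient semi-norm. Positive definiteness is automatic: if $\vertK{\varphi + \NMaps} = 0$, then $\vertG{\varphi} = 0$, i.e., $\varphi \in \NMaps$, so $\varphi + \NMaps = \NMaps$ is the zero class.

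For completeness, I would take a $\vertK{\cdot}$-Cauchy sequence $(\varphi_n + \NMaps)_n$ in $\Maps / \NMaps$. By the identity $\vertK{\varphi + \NMaps} = \vertG{\varphi}$, the sequence of representatives $(\varphi_n)_n$ is itself $\vertG{\cdot}$-Cauchy in $\Maps$. Applying Proposition \ref{prop:cauchy}, a subsequence $(\varphi_{n_k})_k$ converges in $\vertG{\cdot}$ to some $\varphi \in \Maps$, so $\vertK{(\varphi_{n_k} + \NMaps) - (\varphi + \NMaps)} = \vertG{\varphi_{n_k} - \varphi} \to 0$. Since a Cauchy sequence in a semi-normed (in fact normed) space converges whenever it has a convergent subsequence, the full sequence $(\varphi_n + \NMaps)_n$ converges to $\varphi + \NMaps$, establishing that $(\Maps/\NMaps, \vertK{\cdot})$ is a Banach space.

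The main obstacle, if any, was already absorbed into Proposition \ref{prop:cauchy}, where the delicate construction of the limit set map $\varphi = \sum_n \mathds{1}_{\mathcal{I}_n}\varphi_n$ along an increasing cofinal sequence of invariance parameters was carried out. Given that result, the corollary reduces to the routine passage from completeness of a semi-normed space to completeness of the quotient by the kernel, so I would present it essentially as a one-line deduction with the brief verifications above.
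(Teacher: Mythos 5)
Your proposal is correct and takes the same route the paper intends: the statement is presented as a direct consequence of Proposition \ref{prop:cauchy} ($\vertG{\cdot}$-completeness of $\Maps$) together with the standard fact, already cited in the preliminaries, that the quotient of a complete semi-normed space by the ($\vertG{\cdot}$-closed) kernel of its semi-norm is a Banach space; filling in the routine lifting and subsequence argument is exactly what is needed.
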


\begin{remark}
Observe that, since $\vertG{\cdot} \leq \vertsup{\cdot}$, every subset $\mathcal{W} \subseteq \Maps$ that is $\vertG{\cdot}$-closed has to be $\vertsup{\cdot}$-closed. In particular, $\NMaps$ is a $\vertsup{\cdot}$-closed subspace of $\Maps$.
\end{remark}

\subsection{A characterization of the space of asymptotically additive set maps}

\begin{proposition}
\label{cor2}
The space $\AdMaps$ is $\vertG{\cdot}$-complete and the space $\AsMaps$ is $\vertG{\cdot}$-complete and $\vertG{\cdot}$-closed. Moreover, $\AsMaps=\AdMaps + \NMaps$ and $[\AdMaps]_\NMaps = [\AsMaps]_\NMaps$. In particular, $([\AdMaps]_\NMaps, \vertK{\cdot})$ and $([\AsMaps]_\NMaps, \vertK{\cdot})$ coincide and are Banach spaces.
\end{proposition}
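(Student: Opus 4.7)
The plan is to derive all four statements from Theorem~\ref{thm:cuneo} (the additive realization theorem) together with Proposition~\ref{prop:cauchy} ($\vertG{\cdot}$-completeness of $\Maps$). The key intermediate step, from which everything else unfolds in a few lines, is the algebraic decomposition $\AsMaps = \AdMaps + \NMaps$.

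First I would prove $\AsMaps = \AdMaps + \NMaps$. The inclusion $\AdMaps + \NMaps \subseteq \AsMaps$ is immediate from the definition of $\vertG{\cdot}$: any $\psi + \eta$ with $\psi \in \AdMaps$ and $\eta \in \NMaps$ satisfies $\vertG{(\psi + \eta) - \psi} = \vertG{\eta} = 0$, so it lies in the $\vertG{\cdot}$-closure of $\AdMaps$, which is $\AsMaps$. The reverse inclusion is exactly the content of Theorem~\ref{thm:cuneo}: for $\varphi \in \AsMaps$ it produces $\psi \in \AdMaps$ with $\vertG{\varphi - \psi} = 0$, so $\eta := \varphi - \psi \in \NMaps$ and $\varphi = \psi + \eta$.

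Next, I would deduce closedness and completeness. If $(\varphi_n)_n \subseteq \AsMaps$ satisfies $\vertG{\varphi_n - \varphi} \to 0$ for some $\varphi \in \Maps$, then applying Theorem~\ref{thm:cuneo} to each $\varphi_n$ yields $\psi_n \in \AdMaps$ with $\vertG{\varphi_n - \psi_n} = 0$; the triangle inequality gives $\vertG{\psi_n - \varphi} \to 0$, so $\varphi$ lies in the $\vertG{\cdot}$-closure of $\AdMaps$, confirming $\varphi \in \AsMaps$. This shows $\AsMaps$ is $\vertG{\cdot}$-closed, and combined with Proposition~\ref{prop:cauchy} (a $\vertG{\cdot}$-closed subset of a $\vertG{\cdot}$-complete space is $\vertG{\cdot}$-complete), yields $\vertG{\cdot}$-completeness of $\AsMaps$. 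For $\AdMaps$, any $\vertG{\cdot}$-Cauchy sequence $(\psi_n)_n \subseteq \AdMaps \subseteq \AsMaps$ converges to some $\varphi \in \AsMaps$; Theorem~\ref{thm:cuneo} produces $\psi \in \AdMaps$ with $\vertG{\varphi - \psi} = 0$, and the triangle inequality $\vertG{\psi_n - \psi} \leq \vertG{\psi_n - \varphi} + \vertG{\varphi - \psi} \to 0$ gives $\vertG{\cdot}$-completeness of $\AdMaps$.

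For the quotient identifications, applying the identity $[W + U]_U = [W]_U$ from the preliminaries with $W = \AdMaps$ and $U = \NMaps$ yields $[\AsMaps]_\NMaps = [\AdMaps + \NMaps]_\NMaps = [\AdMaps]_\NMaps$. Since $\NMaps \subseteq \AsMaps$ (because $0 \in \AdMaps$) and $\AsMaps$ is $\vertG{\cdot}$-closed, the preimage of $[\AsMaps]_\NMaps$ under the quotient map $q\colon \Maps \to \Maps/\NMaps$ equals $\AsMaps$, which is closed; hence $[\AsMaps]_\NMaps$ is a closed subspace of the Banach space $(\Maps/\NMaps, \vertK{\cdot})$, and is therefore itself a Banach space under $\vertK{\cdot}$. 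There is no serious obstacle here: the real content is concentrated in Theorem~\ref{thm:cuneo}, and once that exact realization statement is available, the remaining steps amount to a short package of closure and quotient manipulations.
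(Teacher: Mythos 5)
Your proof is correct and rests on the same two pillars as the paper's: Theorem~\ref{thm:cuneo} to obtain $\AsMaps = \AdMaps + \NMaps$, and Proposition~\ref{prop:cauchy} for completeness of $\Maps$. The only organizational difference is that you prove $\vertG{\cdot}$-closedness of $\AsMaps$ first (directly, via the triangle inequality after realizing each $\varphi_n$ additively), deduce completeness of $\AsMaps$ as a closed subspace of the complete $\Maps$, and then derive completeness of $\AdMaps$; the paper instead proves completeness of $\AdMaps$ first, then of $\AsMaps$, and finally deduces closedness. Both orders are valid, and your route is arguably a touch tidier since closedness of $\AsMaps$ (the $\vertG{\cdot}$-closure of $\AdMaps$) is essentially automatic. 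You also make explicit the final Banach-space step — that $[\AsMaps]_\NMaps$ is a closed subspace of the Banach space $\Maps/\NMaps$ because its preimage $\AsMaps$ under the quotient map is closed — which the paper leaves implicit.
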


\begin{proof}
By Theorem \ref{thm:cuneo}, $\varphi \in \AsMaps$ if and only if $\vertG{\varphi-\psi} = 0$ for some $\psi \in \AdMaps$. Since $\varphi = \psi + (\varphi-\psi)$ and $\varphi-\psi \in \NMaps$, we have that $\AsMaps = \AdMaps + \NMaps$ and $[\AsMaps]_\NMaps = [\AdMaps]_\NMaps$.

Let $(\psi_n)_n$ be a $\vertG{\cdot}$-Cauchy sequence in $\AdMaps$. Since $\Maps$ is $\vertG{\cdot}$-complete, there exists $\varphi \in \Maps$ such that $\vertG{\varphi - \psi_n} \to 0$. In particular, $\varphi$ is asymptotically additive and, by Theorem \ref{thm:cuneo}, there exists $\psi \in \AdMaps$ such that $\vertG{\varphi - \psi} = 0$. Then,
$$
\vertG{\psi - \psi_n} \leq \vertG{\psi - \varphi} + \vertG{\varphi - \psi_n} = \vertG{\varphi - \psi_n} \to 0,
$$
so $\AdMaps$ is $\vertG{\cdot}$-complete. Since $\AsMaps = \AdMaps + \NMaps$, it follows that $\AsMaps$ is $\vertG{\cdot}$-complete as well, by using additive representatives. Finally, to see that $\AsMaps$ is $\vertG{\cdot}$-closed, suppose that $\varphi \in \Maps$ and $(\varphi_n)_n$ is a sequence in $\AsMaps$ such that $\vertG{\varphi_n - \varphi} \to 0$. Since $\AsMaps$ is $\vertG{\cdot}$-complete and $\AsMaps = \AdMaps + \NMaps$, there exists $\psi \in \AdMaps$ such that $\vertG{\varphi_n - \psi} \to 0$. Therefore, $\vertG{\varphi - \psi} = 0$, so $\varphi = \psi + (\varphi-\psi) \in \AdMaps + \NMaps = \AsMaps$.
\end{proof}

\begin{proposition}
\label{cor:iso3}
If $U$ is a subspace of $V$ and $W \subseteq V$ is a subset such that $\Cob_W \subseteq U \subseteq \wCob$, then the subspaces $(\big[\overline{\langle W \rangle}\big], \|\cdot\|_U)$ and $(\big[\overline{\langle W \rangle}\big], \|\cdot\|_\wCob)$ are linearly isomorphic. If, in addition, $\pi$ is isometric, then both spaces are isometric. 
\end{proposition}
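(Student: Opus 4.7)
The plan is to apply Corollary \ref{cor:LW} twice---once with the given subspace $U$ and once with $\wCob$ itself playing the role of $U$---and then to package the resulting inequalities into the desired isomorphism. The hypothesis $\Cob_W \subseteq U \subseteq \wCob$ is exactly what Corollary \ref{cor:LW} requires, and the analogous hypothesis $\Cob_W \subseteq \wCob \subseteq \wCob$ is trivially satisfied, so both applications are valid.

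Concretely, for every $w \in \overline{\langle W \rangle}$ the corollary yields
$$
C_\pi^{-1}\limsup_{F \to G}\|A_F w\| \;\leq\; \|w+U\|_U \;\leq\; \limsup_{F \to G}\|A_F w\|,
$$
together with the analogous double inequality for $\|w+\wCob\|_\wCob$. Chaining these two comparisons, I would obtain
$$
C_\pi^{-1}\|w+U\|_U \;\leq\; \|w+\wCob\|_\wCob \;\leq\; C_\pi\|w+U\|_U \quad \text{for every}~w \in \overline{\langle W \rangle},
$$
which collapses to the equality $\|w+U\|_U = \|w+\wCob\|_\wCob$ in the isometric case $C_\pi=1$, where both quantities also coincide with $\limsup_{F \to G}\|A_F w\|$.

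The isomorphism is then witnessed by the natural projection $\Phi(w+U) = w+\wCob$ from $\big[\overline{\langle W \rangle}\big]_U$ to $\big[\overline{\langle W \rangle}\big]_\wCob$. It is well-defined, linear, and surjective precisely because $U \subseteq \wCob$, and its bicontinuity (respectively, isometric nature) follows directly from the comparison above. I expect the only real nuisance to be injectivity: if $\Phi(w+U) = 0$ then $w \in \overline{\langle W \rangle} \cap \wCob$, so Lemma \ref{lem:cob} gives $\limsup_{F \to G}\|A_F w\| = 0$ and hence $\|w+U\|_U = 0$ by the upper bound just derived. Thus $\ker\Phi$ is contained in the null subspace of $\|\cdot\|_U$, and $\Phi$ descends to a bijective linear map between the corresponding Hausdorff quotients carrying the asserted equivalent (resp.\ equal) norms. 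This is the content of the claimed linear isomorphism (resp.\ isometry). The main obstacle is exactly this injectivity point---when $U$ fails to be $\|\cdot\|$-closed, $\Phi$ need not be a set-theoretic bijection---but the inequality chain above shows that this failure is invisible to the seminorm, so no further work is needed.
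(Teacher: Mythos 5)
Your argument is correct and follows essentially the same route as the paper: both define the natural quotient map $w + U \mapsto w + \wCob$ on $\big[\overline{\langle W \rangle}\big]$ and invoke Corollary \ref{cor:LW} twice (once for $U$, once for $\wCob$) to sandwich the two quotient seminorms within a factor of $C_\pi$ of one another. Two small remarks: (i) your chain $C_\pi^{-1}\|w+U\|_U \leq \|w+\wCob\|_\wCob \leq C_\pi\|w+U\|_U$ is the correct reading of Corollary \ref{cor:LW}, whereas the paper's displayed chain literally ends with $C_\pi\|w+U\|_U \leq \|w+\wCob\|_\wCob$, which cannot hold when $C_\pi > 1$ (one always has $\|w+U\|_U \geq \|w+\wCob\|_\wCob$ since $U \subseteq \wCob$), so the paper's display has a typo though the intent is clear; (ii) you are also more careful than the paper on the injectivity point, noting explicitly that $\ker\theta$ may be nonzero set-theoretically when $U$ is not closed, but is contained in the null subspace of $\|\cdot\|_U$ so the map is a topological isomorphism of seminormed spaces in the intended sense.
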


\begin{proof}
Consider the linear transformation $\theta: (\big[\overline{\langle W \rangle}\big], \|\cdot\|_U) \to (\big[\overline{\langle W \rangle}\big], \|\cdot\|_\wCob)$ given by $\theta(w + U) =  w + \wCob$ for $w \in \overline{\langle W \rangle}$. Since $U \subseteq \wCob$, the map $\theta$ is well-defined and surjective. By Corollary \ref{cor:LW},
$$
\|\theta(w+U)\|_\wCob = \|w+\wCob\|_\wCob \leq \limsup_{F \to G} \|A_F w\| \leq C_\pi \|w+U\|_U \leq \|w+\wCob\|_\wCob,
$$
so $\theta$ is also continuous and injective, and therefore a linear isomorphism. If, in addition, $\pi$ is isometric, then $C_\pi = 1$, so $\|\theta(w+U)\|_\wCob = \|w+U\|_U$ and $\theta$ is an isometry.
\end{proof}

If $U$ is a closed subspace of $V$, we know that the quotient space $(V/U,\|\cdot\|_U)$ is a Banach space. Considering this, we have the following.

\begin{theorem}
\label{thm:iso4}

Let $U$ be a subspace of $V$ such that $U \subseteq \wCob$. Consider the map $\hat{S}: V/U \to [\AdMaps]_\NMaps$ given by $\hat{S}(v + U) = S(v) + \NMaps$.  Then the map $\hat{S}$ is a surjective and continuous linear transformation from $(V/U,\|\cdot\|_U)$ onto $([\AdMaps]_\NMaps, \vertK{\cdot})$. Moreover, if $U = \wCob$, then $\hat{S}$ is a linear isomorphism and if, additionally, $\pi$ is isometric, then $\hat{S}$ is an isometry.
\end{theorem}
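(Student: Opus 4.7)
\textbf{Proof proposal for Theorem \ref{thm:iso4}.} The plan is to factor the map $\hat{S}$ through the linear isomorphism $S: V \to \AdMaps$ from Proposition \ref{prop:iso1} and then use the coboundary estimates (Propositions \ref{prop:cob1}, \ref{prop:cob2} and Corollary \ref{cor:cob}) to control the quotient semi-norms. I would proceed in four steps.

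First, I would check that $\hat{S}$ is a well-defined linear map into $[\AdMaps]_\NMaps$. If $v + U = v' + U$, then $v - v' \in U \subseteq \wCob$, so by Corollary \ref{cor:cob} we have $\limsup_{F \to G} \|A_F(v-v')\| = 0$, that is, $\vertG{S(v-v')} = 0$, so $S(v) - S(v') \in \NMaps$. Linearity is inherited from $S$, and clearly $S(v) \in \AdMaps$ for every $v$, so the image lies in $[\AdMaps]_\NMaps$.

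Second, I would establish surjectivity: if $\psi \in \AdMaps$, Proposition \ref{prop:iso1} produces $v \in V$ with $S(v) = \psi$, so $\hat{S}(v + U) = \psi + \NMaps$.

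Third, for continuity I would compute
$$
\vertK{\hat{S}(v + U)} = \vertG{S(v)} = \limsup_{F \to G} \left\|\frac{S_F v}{|F|}\right\| = \limsup_{F \to G} \|A_F v\|,
$$
and then invoke Proposition \ref{prop:cob1} (using $U \subseteq \wCob$) to bound this by $C_\pi \|v + U\|_U$. This already gives a bounded linear surjection from $(V/U, \|\cdot\|_U)$ onto $([\AdMaps]_\NMaps, \vertK{\cdot})$ for any subspace $U \subseteq \wCob$.

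Fourth, in the special case $U = \wCob$, I would prove injectivity and continuity of the inverse. Injectivity follows from Corollary \ref{cor:cob}: if $\hat{S}(v + \wCob) = \NMaps$, then $\limsup_{F \to G} \|A_F v\| = \vertG{S(v)} = 0$, hence $v \in \wCob$. For the inverse estimate I would apply Proposition \ref{prop:cob2} with $W = V$:
$$
\|v + \wCob\|_\wCob \leq \inf_{F \in \FSet} \|A_F v\| \leq \limsup_{F \to G} \|A_F v\| = \vertK{\hat{S}(v + \wCob)}.
$$
This together with continuity makes $\hat{S}$ a linear isomorphism of Banach spaces. If in addition $\pi$ is isometric, then $C_\pi = 1$ and Corollary \ref{cor:LW} upgrades the sandwich to equalities, so $\|v + \wCob\|_\wCob = \vertK{\hat{S}(v + \wCob)}$ and $\hat{S}$ is an isometry.

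I do not expect a serious obstacle: the result is essentially a bookkeeping consequence of Proposition \ref{prop:iso1} once the correspondence $S^{-1}(\AdNMaps) = \wCob$ (from Corollary \ref{cor:iso2}) is in hand. The only mildly delicate point is making sure that continuity of $\hat{S}^{-1}$ in the case $U = \wCob$ comes directly from the explicit bound in Proposition \ref{prop:cob2}, so no appeal to the open mapping theorem (and hence no subtle completeness check beyond what Corollary \ref{cor:iso2} already provides) is needed.
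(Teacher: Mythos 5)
Your proof is correct and follows the paper's overall architecture, but it diverges from the paper at one substantive step: the continuity of $\hat{S}^{-1}$ in the case $U = \wCob$. The paper argues this indirectly via the Bounded Inverse Theorem, which requires first verifying that both $(V/\wCob,\|\cdot\|_\wCob)$ and $([\AdMaps]_\NMaps,\vertK{\cdot})$ are Banach spaces. You instead produce the explicit lower bound
\[
\|v + \wCob\|_\wCob \;\le\; \inf_{F \in \FSet}\|A_F v\| \;\le\; \limsup_{F \to G}\|A_F v\| \;=\; \vertK{\hat{S}(v+\wCob)},
\]
which follows from Proposition \ref{prop:cob2} (or Corollary \ref{cor:LW} directly, taking $W = V$ and $U = \wCob$). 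This is more elementary: it requires no appeal to the open mapping theorem or to completeness, and it has the added virtue that combined with the forward bound $\vertK{\hat{S}(v+\wCob)} \le C_\pi\|v+\wCob\|_\wCob$ it immediately yields the isometry statement when $C_\pi = 1$, rather than treating the isometric case as a separate computation as the paper does. The remaining differences are cosmetic: for well-definedness you invoke Corollary \ref{cor:cob}, whereas the paper uses Proposition \ref{prop:cob1}; both reduce to the same observation about averages of weak coboundaries vanishing asymptotically.
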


\begin{proof}
First, note that $S(v) + \NMaps \in [\AdMaps]_\NMaps$ for any $v \in V$. Next, note that the map $\hat{S}$ does not depend on the representative. Indeed, if we pick $v_1,v_2 \in v + U$, then $v_1 - v_2 \in U$, and by Proposition \ref{prop:cob1},
\begin{align*}
\vertK{(S(v_1) + \NMaps) - (S(v_2)+\NMaps)} &   =   \vertK{S(v_1-v_2) + \NMaps} \\
                                            &   \leq \vertG{S(v_1-v_2)}    \\
                                            &   \leq C_\pi \left\|(v_1-v_2) + U\right\|_U \leq C_\pi \left\|U\right\|_U = 0.  
\end{align*}

We now verify that $\hat{S}$ is linear and continuous. Given $v_1 + U, v_2 + U \in V/U$ and $\alpha \in \R$, we have that
\begin{align*}
\hat{S}((v_1 + U) + \alpha(v_2 + U))    &   =   \hat{S}(v_1 + \alpha v_2 + U)   \\
                                                &   =   S(v_1 + \alpha v_2) + \NMaps \\
                                                &   =   (S(v_1) + \NMaps) + (\alpha S(v_2) + \NMaps)    \\
                                                &   = \hat{S}(v_1 + U) + \alpha \hat{S}(v_2 + U),
\end{align*}
so $\hat{S}$ is linear. In addition, by Proposition \ref{prop:cob1} again,
\begin{align*}
\vertK{\hat{S}(v + U)} = \vertK{S(v) + \NMaps} = \vertG{S(v)} \leq C_\pi\|v + U\|_U,
\end{align*}
so $\hat{S}$ is continuous. To see that $\hat{S}$ is surjective, consider an arbitrary $\psi + \NMaps$ in $[\AdMaps]_\NMaps$ with $\psi \in \AdMaps$. Then, since $\psi$ is additive,
$$
\hat{S}(\psi(\{1_G\}) + U) = S(\psi(\{1_G\})) + \NMaps = \psi + \NMaps.
$$
This proves that $\hat{S}$ is a surjective and continuous linear transformation.

Assume that $U = \wCob$ and let us prove that $\hat{S}$ is injective. If $\hat{S}(v_1+\wCob) = \hat{S}(v_2+\wCob)$, then $S(v_1) + \NMaps = S(v_2) + \NMaps$, so $S(v_1-v_2) \in \NMaps$ and
$$
0 = \vertG{S(v_1-v_2)} \geq \|(v_1-v_2) + \Cob\|_{\Cob} \geq \|(v_1-v_2) + \wCob\|_{\wCob},
$$
where we have appealed to Proposition \ref{prop:cob2}. Therefore, $(v_1-v_2) + \wCob = \wCob$, i.e., $v_1+\wCob = v_2+\wCob$.

Finally, since $V/\wCob$ and $[\AdMaps]_\NMaps$ are Banach spaces, by the Bounded Inverse Theorem, the inverse $\hat{S}^{-1}: [\AdMaps] \to V/\wCob$ given by $\hat{S}^{-1}(\psi + \NMaps) = \psi(\{1_G\})$ is also continuous. Then, $\hat{S}$ is a linear isomorphism between $V/\wCob$ and $[\AdMaps]_\NMaps$. Moreover, if $\pi$ is isometric (i.e., if $C_\pi = 1$), $\hat{S}$ is an isometry, since
$$
\vertK{\hat{S}(v+\wCob)} = \vertK{S(v)+\NMaps} = \vertG{S(v)} = \|v + \wCob\|_\wCob.
$$
\end{proof}

The diagram in Figure \ref{diagram} summarizes all the results of this section. Here, $U$ is a subspace of $V$ and $W \subseteq V$ such that $\Cob_W \subseteq U \subseteq \wCob$, and the map $j$ is defined by $j(\psi) = \psi$, which is continuous since $\vertG{\cdot} \leq \vertsup{\cdot}$.

\begin{figure}[ht]
\label{diagram}
\centering
\begin{tikzpicture}[baseline]
    \node[draw, rounded corners, inner xsep=0.1cm, inner ysep=0.1cm](box){
$$
\begin{tikzcd}[row sep=1.0cm, column sep=0.6cm]
                &   (\wCob,\|\cdot\|) \arrow[d, hook] \arrow[r,"\sim","S"'] &  (\AdNMaps,\vertsup{\cdot}) \arrow[d, hook] &   \\
                &   (V,\|\cdot\|) \arrow[r, "\sim","S"'] \arrow[dl, "+U"',twoheadrightarrow] \arrow[d, "+\wCob",twoheadrightarrow] &   (\AdMaps,\vertsup{\cdot}) \arrow[d, "+\AdNMaps",twoheadrightarrow] \arrow[r, "j"]  &  (\AsMaps,\vertG{\cdot}) \arrow[dd, "+\NMaps"] \\
(V/U,\|\cdot\|_{U}) \arrow[r, "+\wCob",twoheadrightarrow]    & (V/\wCob,\|\cdot\|_{\wCob}) \arrow[rd,"\sim", "\hat{S}"'] \arrow[r, "\sim","\overline{S}"'] &   (\AdMaps/\AdNMaps,\vertiii{\cdot}_{\AdNMaps})  \arrow[d, "\sim"]  &  \\
(\big[\overline{\langle W \rangle}\big],\|\cdot\|_{U}) \arrow[r, "\sim","+\wCob"'] \arrow[u, hook]    & (\big[\overline{\langle W \rangle}\big],\|\cdot\|_{\wCob}) \arrow[u,hook]  & ([\AdMaps],\vertK{\cdot}) \ar[equal]{r}  &  ([\AsMaps],\vertK{\cdot})
\end{tikzcd}
$$
};
\end{tikzpicture}
\caption{Summary of relations between spaces.}
\end{figure}

\section{Relative asymptotical additivity}
\label{sec:relative}

A natural, and relevant for applications, question that stems from the result by Cuneo \cite[Theorem 1.2]{cuneo2020additive} is whether asymptotically additive sequences of functions with certain regularity have an additive realization with the same regularity. In the applications given in \cite{cuneo2020additive} to thermodynamic formalism this is a fundamental question that remained open. Of course, the same can be asked in relation to Theorem \ref{thm:cuneo}. In this section we provide a definition of asymptotically additive with respect to a subset and go on to establish conditions so that additive realizations belong to it.

Given $\varphi \in \Maps$ and a subset $W \subseteq V$, we say that $\varphi$ is {\bf asymptotically additive relative to $W$} if
$$
\inf_{w \in W} \limsup_{F \to G} \left\|\frac{\varphi(F)}{|F|}-A_F w\right\| = 0.
$$
Clearly, being asymptotically additive relative to $W$ is a stronger property than merely being asymptotically additive, which corresponds to the particular case $W = V$. In particular, by Corollary \ref{cor:cuneo}, there exists an additive realization $v \in V$ such that
$$
\limsup_{F \to G} \left\|\frac{\varphi(F)}{|F|}-A_F v\right\| = 0.
$$
We are interested in studying when it is possible to chose an additive realization $v$ in $W$.

\subsection{Geometrical considerations}

Denote by $\AsMaps(W)$ the set of maps in $\Maps$ that are asymptotically additive relative to $W$.

\begin{lemma}
\label{lem:closed}
The set $\AsMaps(W)$ is $\vertG{\cdot}$-closed.
\end{lemma}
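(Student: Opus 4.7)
The plan is straightforward: show that the defining infimum for membership in $\AsMaps(W)$ is continuous with respect to $\vertG{\cdot}$, via a routine triangle-inequality argument.

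Let $(\varphi_n)_n$ be a sequence in $\AsMaps(W)$ and suppose $\varphi \in \Maps$ satisfies $\vertG{\varphi - \varphi_n} \to 0$. I want to show $\varphi \in \AsMaps(W)$, i.e., that for every $\epsilon > 0$ there exists $w \in W$ with
$$\limsup_{F \to G} \left\|\frac{\varphi(F)}{|F|} - A_F w\right\| < \epsilon.$$

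Given such $\epsilon$, first I would pick $n$ large enough that $\vertG{\varphi - \varphi_n} < \epsilon/2$. Since $\varphi_n \in \AsMaps(W)$, the definition of relative asymptotic additivity yields some $w \in W$ with
$$\limsup_{F \to G} \left\|\frac{\varphi_n(F)}{|F|} - A_F w\right\| < \epsilon/2.$$
Applying the triangle inequality inside the limit superior and using that $\limsup$ is subadditive,
$$\limsup_{F \to G} \left\|\frac{\varphi(F)}{|F|} - A_F w\right\| \leq \vertG{\varphi - \varphi_n} + \limsup_{F \to G} \left\|\frac{\varphi_n(F)}{|F|} - A_F w\right\| < \epsilon.$$
Since $\epsilon > 0$ was arbitrary, the infimum over $w \in W$ vanishes, so $\varphi \in \AsMaps(W)$.

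There is no real obstacle here; the only small point to keep in mind is that $\vertG{\varphi - \varphi_n}$ is by definition $\limsup_{F \to G}\|(\varphi(F) - \varphi_n(F))/|F|\|$, which is exactly the quantity needed to bound the difference of the two limit superiors above by the triangle inequality.
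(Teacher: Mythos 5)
Your proof is correct and uses essentially the same argument as the paper: the triangle inequality for $\limsup$, followed by passing to the infimum over $W$ (the paper takes the infimum before letting $n\to\infty$, you fix $\epsilon$ and choose $n$ and then $w$; these are trivially equivalent phrasings).
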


\begin{proof}
Let $(\varphi_n)_n$ be a sequence in $\AsMaps(W)$ and $\varphi \in \Maps$ such that $\vertG{\varphi_n - \varphi} \to 0$ as $n \to \infty$. It suffices to prove that $\varphi$ belongs to $\AsMaps(W)$. Indeed, given $w \in W$ and $n \in \N$,
\begin{align*}
\limsup_{F \to G} \left\|\frac{\varphi(F)}{|F|} - A_F w\right\| &    \leq   \limsup_{F \to G} \left(\left\|\frac{\varphi(F)}{|F|} - \frac{\varphi_n(F)}{|F|}\right\| + \left\|\frac{\varphi_n(F)}{|F|} - A_F w\right\|\right)   \\
                    &   \leq    \vertG{\varphi-\varphi_n} + \limsup_{F \to G} \left\|\frac{\varphi_n(F)}{|F|} - A_F w\right\|.
\end{align*}
Taking the infimum over $w \in W$, we obtain that
$$
\inf_{w \in W}\limsup_{F \to G} \left\|\frac{\varphi(F)}{|F|} - A_F w\right\|    \leq    \vertG{\varphi-\varphi_n} + \inf_{w \in W}\limsup_{F \to G} \left\|\frac{\varphi_n(F)}{|F|} - A_F w\right\| = \vertG{\varphi-\varphi_n},
$$
and letting $n \to \infty$, the result follows.
\end{proof}

\begin{lemma}
\label{lem:convex}
If $W$ is convex, then $\AsMaps(W)$ is convex.
\end{lemma}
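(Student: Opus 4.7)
The plan is to exploit linearity of $A_F$ together with the convexity of $W$ in a direct triangle-inequality argument. Fix $\varphi_1, \varphi_2 \in \AsMaps(W)$ and $\lambda \in [0,1]$. I want to show that $\varphi := \lambda \varphi_1 + (1-\lambda)\varphi_2$ lies in $\AsMaps(W)$, which by definition means that for every $\epsilon > 0$, I need to produce some $w \in W$ witnessing the required inequality.

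First I would fix $\epsilon > 0$ and, by the defining property of $\AsMaps(W)$, select $w_1, w_2 \in W$ such that
$$
\limsup_{F \to G} \left\|\frac{\varphi_i(F)}{|F|} - A_F w_i\right\| < \epsilon \quad \text{for } i = 1,2.
$$
Since $W$ is convex, the element $w := \lambda w_1 + (1-\lambda)w_2$ belongs to $W$, and by the linearity of $A_F$,
$$
\frac{\varphi(F)}{|F|} - A_F w = \lambda\left(\frac{\varphi_1(F)}{|F|} - A_F w_1\right) + (1-\lambda)\left(\frac{\varphi_2(F)}{|F|} - A_F w_2\right).
$$
Applying the triangle inequality, taking the limit superior, and using that $\limsup$ is subadditive for non-negative quantities, I obtain
$$
\limsup_{F \to G}\left\|\frac{\varphi(F)}{|F|} - A_F w\right\| \leq \lambda \epsilon + (1-\lambda)\epsilon = \epsilon.
$$

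Taking the infimum over $w \in W$ (specifically, using the above as an upper bound for this infimum) yields
$$
\inf_{w \in W} \limsup_{F \to G}\left\|\frac{\varphi(F)}{|F|} - A_F w\right\| \leq \epsilon,
$$
and since $\epsilon > 0$ was arbitrary, the infimum equals $0$, so $\varphi \in \AsMaps(W)$. I do not foresee any real obstacle here; the only subtlety is remembering that $\varphi$ itself is bounded and $G$-equivariant (hence in $\Maps$), which follows immediately from $\varphi_1, \varphi_2 \in \Maps$ and the fact that $\Maps$ is a vector space.
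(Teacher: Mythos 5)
Your proof is correct and follows essentially the same argument as the paper: pick near-optimal witnesses $w_1, w_2 \in W$, use convexity of $W$ to form $w = \lambda w_1 + (1-\lambda) w_2$, and combine linearity of $A_F$ with the triangle inequality and subadditivity of $\limsup$. The only cosmetic difference is using an arbitrary $\epsilon > 0$ where the paper uses $2^{-n}$.
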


\begin{proof}
Given maps $\varphi_1,\varphi_2 \in \AsMaps(W)$ and $n \in \N$, let $w_1,w_2 \in W$ such that
$$
\limsup_{F \to G} \left\|\frac{\varphi_i(F)}{|F|}-A_F w_i\right\| \leq 2^{-n} \quad \text{for $i=1,2$.}
$$
Since $W$ is convex, we have that $\lambda w_1+(1-\lambda)w_2 \in W$ for $\lambda \in [0,1]$. Then,
\begin{align*}
        &   \limsup_{F \to G} \left\|\frac{(\lambda\varphi_1+(1-\lambda)\varphi_2)(F)}{|F|}-A_F(\lambda w_1+(1-\lambda)w_2)\right\|  \\
\leq    & \quad   \lambda\limsup_{F \to G} \left\|\frac{\varphi_1(F)}{|F|}-A_Fw_1\right\| + (1-\lambda)\limsup_{F \to G} \left\|\frac{\varphi_2(F)}{|F|}-A_Fw_2\right\|   \\
\leq    &  \quad \lambda 2^{-n} + (1-\lambda)2^{-n} = 2^{-n}.
\end{align*}

Since $n$ was arbitrary, we conclude that
$$
\inf_{w \in W}\limsup_{F \to G} \left\|\frac{(\lambda\varphi_1+(1-\lambda)\varphi_2)(F)}{|F|}-A_Fw\right\| = 0.
$$
\end{proof}

\subsection{A classification theorem for additive realizations}
In our next result we characterize the set to which  additive realizations of asymptotically additive relative to a subset $W$ belong. Interestingly, we decompose this set into two parts and prove that all additive realizations belong to the same one.

\theoremtwo*

\begin{proof}
Notice that $\varphi: \FSet \to V$ is asymptotically additive relative to $W$ if and only if, for each $n \in \N$, there exists $w_n \in W$ such that $\vertG{\varphi - S(w_n)} \leq 2^{-n}$, so
$$
\vertG{S(w_n) - S(w_m)} \leq 2^{-n} + 2^{-m} \quad \text{for all}~n,m \in \N,
$$
hence $(S(w_n))_n$ is a Cauchy sequence in $(\AsMaps,\vertG{\cdot})$. Next, observe that $(S(w_n))_n$ is a Cauchy sequence in $(\AsMaps,\vertG{\cdot})$ if and only if $(S(w_n) + \NMaps)_n$ is a Cauchy sequence in $([\AsMaps],\vertK{\cdot})$, because $\vertK{\varphi' + \NMaps} = \vertG{\varphi'}$ for every $\varphi' \in \AsMaps$. By Proposition \ref{cor2}, $[\AsMaps] = [\AdMaps]$ and, equivalently, $(S(w_n) + \NMaps)_n$ is a Cauchy sequence in $([\AdMaps],\vertK{\cdot})$. By Theorem \ref{thm:iso4}, this is equivalent to $(w_n + \wCob)_n$ being a Cauchy sequence in $(V/\wCob,\|\cdot\|_\wCob)$. Notice that, since $w_n \in W$ for each $n$, $(w_n + \wCob)_n$ is a Cauchy sequence in $(\big[\overline{\langle W \rangle}\big], \|\cdot\|_\wCob)$ as well. By Proposition \ref{cor:iso3}, $(w_n + \Cob_W)_n$ is a Cauchy sequence in $(V/\Cob_W,\|\cdot\|_{\Cob_W})$. Since $(V/\Cob_W,\|\cdot\|_{\Cob_W})$ is complete, there exists $v \in V$ such that $(w_n + \Cob_W)_n$ converges to $v + \Cob_W$ in $V/\Cob_W$, which implies that $v \in \overline{W + \Cob_W}$. It then follows that $v$ is an additive realization of $\varphi$, because $\lim_n (w_n + \Cob_W) = v + \Cob_W$ implies $\lim_n (w_n + \wCob) = v + \wCob$ and
$$
\vertG{\varphi - S(v)} \leq \vertG{\varphi - S(w_n)} + \vertG{S(w_n)-S(v)} \leq 2^{-n} + \vertG{S(w_n)-S(v)} \to 0
$$
as $n \to \infty$. This proves that $(A1) \implies (A2)$. To see the converse, observe that if $v \in \overline{W + \Cob_W}$ is an additive realization of $\varphi$, then there exists a sequence $(w_n)_n$ in $W$ such that $w_n + \Cob_W$ converges to $v + \Cob_W$, so
$$
\vertG{\varphi - S(w_n)} \leq \vertG{\varphi - S(v)} + \vertG{S(v) - S(w_n)} = \vertG{S(v) - S(w_n)},
$$
and 
$$
\inf_{w \in W} \vertG{\varphi - S(w)} \leq \inf_{n \in \N} \vertG{\varphi - S(w_n)} = \inf_{n \in \N} \vertG{S(v) - S(w_n)} = 0.
$$

By (A2), we know that there exists an additive realization $v \in \overline{W+\Cob_W}$ of $\varphi$ and, by Lemma \ref{lem:witness}, we know that $\witness = v + \wCob$. Then, $\witness \subseteq \overline{W+\Cob_W} + \wCob \subseteq \overline{W+\Cob}$, since $\Cob_W \subseteq \Cob$. Therefore, $\witness \cap (W + \wCob) = \emptyset$ or $\witness \cap (W + \wCob) \neq \emptyset$. In the former, we conclude directly that (B2) holds. In the latter, we conclude that $\witness = (w + \wCob) + \wCob = w + \wCob$ for some $w \in W$, so (B1) holds.
\end{proof}

\begin{remark}
\label{rem:tool}
Every $v \in \overline{W}$ is an additive realization for some set map $\varphi: \FSet \to \langle W \rangle$ asymptotically additive relative to $W$. It suffices to consider a sequence $(v_n)_n$ in $W$ that converges to $v$ and let $\varphi(F) = S_F v_{|F|}$ for every $F \in \FSet$. Since $S_F v_{|F|}$ is just a linear combination of elements in $W$, we have that $\varphi(F)$ belongs to $\langle W \rangle $ for every $F \in \FSet$. In addition, $v$ is an additive realization of $\varphi$, since
$$
\lim_{F \to G} \left\|\frac{\varphi(F)}{|F|}-A_F v\right\|  =   \lim_{F \to G} \left\|A_F(v_{|F|}-v)\right\|  \leq   \lim_{F \to G} C_\pi\left\|v_{|F|}-v\right\| = 0,
$$
where we have used that $\pi$ is uniformly bounded. Notice, however, that this fact does not address the question whether there are examples in which (B1) and (B2) from Theorem \ref{thm:cuneo-rel} actually occur. Such examples will be provided in \S \ref{subsec:examples} and \S \ref{subsec:examples-thermo}.
\end{remark}

\begin{corollary}
\label{cor:cuneo2}
Let $G$ be a countable amenable group, $(V,\|\cdot\|)$ a complete semi-normed space, and $\pi: G \to \mathrm{Isom}(V)$ a uniformly bounded representation. Given a non-empty subset $W \subseteq V$ and a subspace $U$ such that $\Cob_W \subseteq U \subseteq \wCob$, the following are equivalent:
\begin{enumerate}
    \item $W + U$ is closed in $V$.
    \item For every set map $\varphi: \FSet \to V$ that is asymptotically additive relative to $W$, there exists an additive realization of $\varphi$ in $W$.
\end{enumerate}
\end{corollary}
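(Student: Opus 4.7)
The plan is to prove the two implications separately, leveraging Theorem~\ref{thm:cuneo-rel} and Lemma~\ref{lem:witness} to translate between the asymptotic and the realized worlds.

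For the implication $(1)\Rightarrow(2)$, assume that $W+U$ is closed and let $\varphi\in\Maps$ be asymptotically additive relative to $W$. Theorem~\ref{thm:cuneo-rel} furnishes an additive realization $v\in\overline{W+\Cob_W}$. Because $\Cob_W\subseteq U$, one has $\overline{W+\Cob_W}\subseteq\overline{W+U}=W+U$, so $v=w+u$ with $w\in W$ and $u\in U\subseteq\wCob$. By Lemma~\ref{lem:witness}, $\witness=v+\wCob=w+u+\wCob=w+\wCob$, and in particular $w\in W$ is an additive realization of $\varphi$.

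For the converse $(2)\Rightarrow(1)$, take $v\in\overline{W+U}$ and sequences $w_n\in W$, $u_n\in U$ with $w_n+u_n\to v$. The idea is to apply (2) to the additive set map $\varphi:=S(v)$, which is asymptotically additive relative to $W$ by the estimate
\[
\limsup_{F\to G}\|A_F v-A_F w_n\|\le C_\pi\|v-w_n-u_n\|,
\]
obtained by splitting $v-w_n=(v-w_n-u_n)+u_n$, using $\|A_F\cdot\|\le C_\pi\|\cdot\|$, and noting $\limsup_F\|A_F u_n\|=0$ since $u_n\in U\subseteq\wCob$ (Lemma~\ref{lem:cob}); the right-hand side tends to zero as $n\to\infty$. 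Hypothesis (2) then yields $w_0\in W$ that is an additive realization of $\varphi$, so Corollary~\ref{cor:cob} gives $v-w_0\in\wCob$.

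The remaining and most delicate step, which I expect to be the main obstacle, is to upgrade $v-w_0\in\wCob$ to $v-w_0\in U$, so that $v=w_0+(v-w_0)\in W+U$. My plan is to invoke Proposition~\ref{cor:iso3}: the map $w+U\mapsto w+\wCob$ is a linear isomorphism (with comparable semi-norms via the factor $C_\pi$) between $\bigl[\overline{\langle W\rangle}\bigr]$ equipped with $\|\cdot\|_U$ and with $\|\cdot\|_\wCob$. Locating $v-w_0$ in $\overline{\langle W\rangle}$ via the approximating sequence $w_n+u_n\to v$ together with $\Cob_W\subseteq U$, the vanishing of $\|(v-w_0)+\wCob\|_\wCob$ transfers to the vanishing of $\|(v-w_0)+U\|_U$, placing $v-w_0$ in $\overline{U}$. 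A final closure argument, using the Cauchy character of $(w_n+U)$ in $V/U$ together with the concrete approximation produced by the $u_n$'s, then pins down $v-w_0\in U$, yielding $v\in W+U$ and hence closedness of $W+U$.
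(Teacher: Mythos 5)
Your $(1)\Rightarrow(2)$ argument is correct and coincides with the paper's: closedness of $W+U$ squeezes the additive realization from $\overline{W+\Cob_W}$ into $W+\wCob$, and Lemma~\ref{lem:witness} extracts a realization in $W$.

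The gap you flag in $(2)\Rightarrow(1)$ is genuine and cannot be closed in general. From $w_0\in W$ being an additive realization of $S(v)$ you get $v-w_0\in\wCob$, hence $v\in W+\wCob$; but to conclude $v\in W+U$ you need $v-w_0\in U$, and when $U\subsetneq\wCob$ there is simply no reason for this. The planned repair via Proposition~\ref{cor:iso3} does not go through: the approximants $w_n+u_n$ have $u_n\in U$, and $U$ need not lie in $\overline{\langle W\rangle}$ (indeed $\Cob_W\not\subseteq\langle W\rangle$ in general, since $\pi(g)w$ need not belong to $W$); moreover, even if it applied, the conclusion $\|(v-w_0)+U\|_U=0$ would only place $v-w_0$ in $\overline{U}$, not in $U$. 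In fact the implication $(2)\Rightarrow(1)$ is false as stated: condition (2) does not mention $U$, whereas (1) does, so the equivalence could only hold if closedness of $W+U$ were independent of the choice of $U$ in the range $\Cob_W\subseteq U\subseteq\wCob$. Concretely, for $W=\{0\}$ one has $\Cob_W=\{0\}$ and (2) holds automatically (asymptotic additivity relative to $\{0\}$ means $\vertG{\varphi}=0$, which forces $0$ to be an additive realization); but if $\wCob$ is infinite-dimensional, taking $U$ a dense, non-closed subspace of $\wCob$ makes $W+U=U$ non-closed. Note that your direct argument does succeed in the special case $U=\wCob$, since then $v-w_0\in\wCob=U$ and $v\in W+U$ immediately.

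For comparison, the paper's second paragraph assumes that some $\varphi$ asymptotically additive relative to $W$ has no realization in $W$ (that is, $\neg(2)$) and derives that $W+U$ is not closed ($\neg(1)$). That is the contrapositive of $(1)\Rightarrow(2)$, not of $(2)\Rightarrow(1)$: the paper's proof also reproves only the first implication, and the corollary as stated holds only when $U=\wCob$ (equivalently, it characterizes the containment $\overline{W+\Cob_W}\subseteq W+\wCob$, not closedness of an arbitrary intermediate $W+U$).
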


\begin{proof}
If $W + U$ is closed in $V$, then $\overline{W+\Cob_W} \subseteq W + U \subseteq W + \wCob$. Therefore, if $\varphi: \FSet \to V$ is asymptotically additive relative to $W$, there exists an additive realization $v \in W + \wCob$, i.e., $\witness = w + \wCob$ for some $w \in W$; in particular, there exists an additive realization, namely $w$, of $\varphi$ in $W$.

Let us suppose that $\varphi: \FSet \to V$ is asymptotically additive relative to $W$ and there is no additive realization $w \in W$ of $\varphi$. Then, we know that there exists an additive realization in $\overline{W + \Cob_W}$ and there is no additive realization in $W + \wCob$. In particular,
$$
\overline{W + U} \setminus (W + U) \supseteq \overline{W + \Cob_W} \setminus (W + \wCob) \neq \emptyset,
$$
so $W + U$ is not closed in $V$.
\end{proof}

\begin{remark}
As discussed in \S \ref{sec:folner}, we can also consider the notion of asymptotical additivity relative to a subset $W$ by restricting ourselves to a particular F{\o}lner sequence $(F_n)_n$, i.e.,
$$
\inf_{w \in W} \limsup_{n \to \infty} \left\|\frac{\varphi(F_n)}{|F_n|}-A_{F_n} w\right\| = 0.
$$
In this case, it is easy to see that everything can be adapted accordingly, by replacing
$$
\lim_{F \to G} \left\|\frac{\varphi(F)}{|F|}-A_{F} w\right\| = 0.
$$
with the weaker conclusion
$$
\lim_{n \to \infty} \left\|\frac{\varphi(F_n)}{|F_n|}-A_{F_n} w\right\| = 0.
$$
\end{remark}

\begin{remark}
The condition asking for $W + U$ to be closed is satisfied in natural scenarios. For example, if $W$ is a closed and $G$-invariant subspace of $V$ (i.e., $\pi(G)W = W$), then $\Cob_W \subseteq W$ and $W + \Cob_W = W$, so $W + \Cob_W$ is closed. 
\end{remark}

\subsection{Examples}
\label{subsec:examples}
We provide several examples exhibiting the different behaviors that occur in Theorem \ref{thm:cuneo-rel}.

\begin{example}[$L^p$ spaces]
Let $G \acts (X,\mu)$ be an ergodic probability measure preserving action. Consider the Banach space $V = L^1_\mu(X)$ endowed with the norm $\|\cdot\| = \|\cdot\|_1$ and the associated Koopman representation $\pi: G \to \Isom(V)$ given by $\pi(g)f(x) = f(g^{-1}\cdot x)$ for $x \in X$ $\mu$-a.s. Given $p \geq 1$, take the subset $W = L^p_\mu(X) \subseteq V$. Since $L^p_\mu(X)$ is $\|\cdot\|_1$-dense in $L^1_\mu(X)$, we have that $\wCob_{W} = \wCob$. In addition, since the system is ergodic, the only $G$-invariant functions are the constants ones. Then (e.g., see \cite[Theorem 4.28]{kerr2016ergodic}), if we take $U = \wCob_W$,
$$
W + U = L^p_\mu(X) + \wCob_{L^p_\mu(X)} = \R\mathds{1} + \wCob_{L^1_\mu(X)} = L^1_\mu(X) = V,
$$
where $\R\mathds{1}$ denotes the subspace of real-valued constant functions. Therefore, $W + U$ is closed, and Corollary \ref{cor:cuneo2} assures that for every set map $\varphi: \FSet \to L^1_\mu(X)$ that is asymptotically additive relative to $L^p_\mu(X)$, there exists an additive realization of $\varphi$ in $L^p_\mu(X)$.

Note that our results remain valid if we replace the Banach spaces of classes of functions $L^p_\mu(X)$ by the corresponding complete semi-normed spaces of functions $\mathcal{L}^p_\mu(X)$.
\end{example}

\begin{example}[Bounded and continuous functions]
Fix a compact metric space $X$ and let $G \acts (X,\mu)$ be a continuous and ergodic probability measure preserving action. Consider the Banach space $V =\mathcal{B}(X)$ of bounded real-valued functions endowed with the uniform norm $\|\cdot\| = \|\cdot\|_\infty$ and the associated Koopman representation $\pi: G \to \Isom(V)$ given by $\pi(g)f(x) = f(g^{-1}\cdot x)$ for every $x \in X$. Let $E \subseteq \R$ be a closed set  and consider the subset $W = \{f \in \mathcal{C}(X): \int{f}d\mu \in E\} \subseteq V$. Then, since $\mathcal{C}(X)$ is closed in $\mathcal{B}(X)$ and $E$ is closed in $\R$, it follows that $W$ is closed in $V$. Moreover, since $\mu$ is $G$-invariant, the set $W$ is $G$-invariant, so $\Cob_W \subseteq W$. Therefore, $W + \Cob_W = W$ is closed, and for every set map $\varphi: \FSet \to \mathcal{B}(X)$ that is asymptotically additive relative to $W$, there exists an additive realization $h \in W$ of $\varphi$.
\end{example}

\begin{example}[Walters condition]
Let $X \subseteq \{0,1\}^\Z$ be a transitive subshift and  $T: X \to X$ the shift map. Denote  by $\mathrm{Wal}(X,T)$ the set of functions $f: X \to \R$ that satisfies the \emph{Walters condition}, that is, for every $\epsilon>0$, there exists $\delta>0$ such that
$$
d_n(x, y) \leq \delta \implies \left|\sum_{i=0}^{n-1} f(T^ix)-\sum_{i=0}^{n-1} f(T^iy)\right| \leq \epsilon,
$$
for all $n \in \N$ and for all $x, y \in X$, where $d_n$ denotes the \emph{$n$-th Bowen metric} (see \cite{bousch2001condition}).

Let $V = \mathcal{C}(X)$ and let $\pi: \Z \to \Isom(V)$ be the associated Koopman representation given by $\pi(-n)f(x) = f(T^n x)$ for every $x \in X$ and $n \in \Z$. Consider $W = \mathrm{Wal}(X,T)$. It is well-known that $W$ is dense but not equal to $V$ \cite{bousch2001condition}. We will establish that there exists a set map $\varphi: \FSet \to V$ that is asymptotically additive relative to $W$ for which there is no additive realization for $\varphi$ in $W$. To do this, due to Corollary \ref{cor:cuneo2}, it suffices to verify that $W + U$ is not closed for some subspace $\Cob_W \subseteq U \subseteq \wCob$. In this case, it is convenient to consider $U = \Cob$, since by \cite[Proposition 1]{bousch2001condition}, every coboundary satisfies the Walters condition, i.e., $\Cob \subseteq W$. Therefore, $W + U = W$, that is not closed in $V$.
\end{example}

\begin{example}
Fix a compact metric space $X$ and let $G \acts X$ be a continuous action. Consider the dual space $V =\mathcal{C}(X)^*$ endowed with the linear operator norm and the representation $\pi: G \to \Isom(V)$ given by $\pi(g) L(f) = L(\kappa(g)f)$ for every $L \in \mathcal{C}(X)^*$, $f \in \mathcal{C}(X)$, and $g \in G$, where $\kappa: G \to \Isom(\mathcal{C}(X))$ is the associated Koopman representation.

Take $W$ to be the set $\mathcal{P}(X)$ of Borel probability measures on $X$, through the identification given by the Riesz representation theorem. Then, $W$ is closed (as it is closed for the weak* topology) and $G$-invariant. Therefore, Corollary \ref{cor:cuneo2} assures that for every set map $\varphi: \FSet \to \mathcal{C}(X)^*$ that is asymptotically additive relative to $\mathcal{P}(X)$, there exists an additive realization of $\varphi$ in $\mathcal{P}(X)$. Similarly, if $W$ is the set $\mathcal{P}_G(X)$ of $G$-invariant Borel probability measures on $X$, then $W$ is closed and $G$-invariant. Moreover, its points are fixed by $\pi$ and $\Cob_W = \{0\}$, and the same conclusion holds.
\end{example}

\section{Applications to thermodynamic formalism} \label{sec:thermo}

A natural and relevant application of our results is in the realm of thermodynamic formalism.
Indeed, as observed by Cuneo \cite{cuneo2020additive} for $\Z$-actions, the thermodynamic formalism developed to study asymptotically additive sequences \cite{feng-huang} can be reduced 
 to the classical case of continuous functions by means of additive realizations. Following the same line of thought, in this section  we extend the thermodynamic formalism so as to consider general group actions and the related asymptotically additive maps. By means of Theorem \ref{thm:cuneo} and Corollary \ref{cor:cuneo}, we are able to reduce it to the classical case.  We define the pressure and the notion of equilibrium state and go on to prove the variational principle. Our results extend the theory  for group actions, as studied in  \cite{Moulin,stepin}, to the asymptotically additive setting.
We stress that we are able to deal with both compact and non-compact phase spaces. Moreover, we obtain new results even in the classical setting of $\Z$-actions.

An open question that arose from Cuneo's work was whether asymptotically additive sequences of functions with certain regularity have an additive realization with the same regularity. In order to understand the relevance of the question, recall that the thermodynamic formalism of regular functions is very well behaved. In the $\Z$-action setting when the phase space is compact (e.g., see \cite{bowen}), there exists a unique equilibrium state, the pressure function is real analytic, and the equilibrium state has strong ergodic properties. In order to address this question, Theorem \ref{thm:cuneo-rel} is a fundamental result. It establishes a dichotomy, either an additive realization belongs to the same regularity class $W$ or there exists an additive realization in the complement of the regularity class modulo weak coboundaries, that is, in $V \setminus (W + \wCob)$. In Example \ref{ex:hol}, we show that both instances of this dichotomy can occur for natural regularity classes. Theorem \ref{thm:cuneo-rel} clarifies what has been observed in the setting of thermodynamic formalism of matrix cocycles \cite[Theorem 2.9]{bkm}. Indeed, in \cite{bkm} an asymptotically additive sequence of H\"older functions is constructed for which the additive realization is not H\"older.
Similar examples were also obtained in \cite{holanda}.

\subsection{Pressure} \label{sec_pres} 

The mathematical notion of pressure was brought from statistical mechanics into dynamics in the early 1970s by Ruelle and Sinai among others in the context of $\Z$- and $\Z^d$-actions over compact spaces \cite{ruelle,sinai}. This definition was subsequently extended to actions of amenable groups; see for example \cite{ollagnier2007ergodic,Moulin,bufetov2011pressure}. The compactness assumption was dropped for symbolic $\Z$-actions in \cite{mauldin1996dimensions, sarig1999thermodynamic} and for actions of $\Z^d$ and general amenable groups in \cite{muir2011new} and \cite{beltran2024thermodynamic}, respectively. Non-additive versions of thermodynamic formalism in which the pressure is evaluated in a sequence of functions instead of a single function, were introduced by Falconer in the early 1980s \cite{falconer1988}. This theory has been greatly expanded so as to deal with different types of cocycles (e.g., see \cite{barreira1,liang2012topological,yan2013sub}). In this section we define the pressure for amenable group actions and asymptotically additive maps in the symbolic setting.

Consider the {\bf $G$-full shift} over $\N$, that is, the set $\N^G = \{x: G \to \N\}$ of $\N$-colorings of $G$, endowed with the prodiscrete topology, where $\N$ denotes the set of non-negative integers. Fix a strictly increasing sequence $(E_m)_m$ in $\FSet$ such that $E_1 = \{1_G\}$ and $\bigcup_m E_m=G$. Then, the topology of $\N^G$ is metrizable by the metric $d$ given by $d(x, y)=2^{-\inf \left\{m \geq 1: x_{E_m} \neq y_{E_m}\right\}}$, where $x_F$ denotes the restriction of $x$ to a set $F \subseteq G$.

The group $G$ acts on $\N^G$ by translations, i.e.,
\begin{equation*}
(g \cdot x)(h) = x(hg) \quad \text{for every}~x \in X~\text{and}~g,h \in G.
\end{equation*}
A {\bf $G$-subshift} will be any closed and $G$-invariant subset $X \subseteq \N^G$. We will denote by $X_F=\left\{x_F: x \in X\right\}$ the set of restrictions of $x \in X$ to $F$. The sets of the form $[w]=\left\{x \in X: x_F=w\right\}$, for $w \in X_F$ and $F \in \FSet$, are called cylinder sets. The family of such sets is the standard basis for the subspace topology of $X$. It is well-known that $X = A^G$ is a compact $G$-subshift for any finite subset $A \subseteq \N$.

Fix a $G$-subshift $X$. Consider the linear space of real-valued continuous functions $\mathcal{C}(X)$. In this context, the Koopman representation $\pi:G \to \mathrm{Isom}(\mathcal{C}(X))$ is given by
$$
\pi(g)\phi(x) = \phi(g^{-1} \cdot x) \quad \text{for every}~g \in G, \phi \in \mathcal{C}(X),~\text{and}~x \in X.
$$

Given a set map $\varphi: \FSet \to \mathcal{C}(X)$, we define the {\bf partition function for $\varphi$ on $F$} as
\begin{equation*}
\hat{Z}_F(\varphi):= \sum_{w \in X_F} \exp \left(\sup \varphi(F)([w])\right),
\end{equation*}
where $\sup\varphi(F)([w]) = \sup\{\varphi(F)(x): x \in [w]\}$. We define the {\bf pressure of $\varphi$}, which we denote by $\hat{P}(\varphi)$, as 
$$
\hat{P}(\varphi) := \lim_{F \to G} \frac{1}{|F|} \log \hat{Z}_F(\varphi),
$$
whenever the limit exists.

Consider again the map $S: \mathcal{C}(X) \to \AdMaps$ given by
$$
S(\phi)(F) = S_F(\phi) = \sum_{g \in F} \phi(g \cdot ~).
$$
Let $\phi \in \mathcal{C}(X)$ and $F \in \FSet$, and define the {\bf partition function for $\phi$ on $F$} as
\begin{equation*}
Z_F(\phi):= \hat{Z}_F(S(\phi)) = \sum_{w \in X_F} \exp \left(\sup S_F(\phi)([w])\right).
\end{equation*}
We define the {\bf pressure of $\phi$}, which we denote by $P(\phi)$, as 
$$
P(\phi) := \hat{P}(S(\phi)) = \lim_{F \to G} \frac{1}{|F|} \log Z_F(\phi),
$$
whenever the limit exists. Note that the definitions of $Z_F(\phi)$ and $P(\phi)$ recover the standard ones.

We will say that $\phi \in \mathcal{C}(X)$ has {\bf finite oscillation} if
$$
\delta(\phi) := \sup \left\{|\phi(x)-\phi(y)|: x(1_G)=y(1_G)\right\}<\infty,
$$
and that is {\bf exp-summable} if
$$
\sum_{a \in \N} \exp (\sup\{\phi(x): x(1_G) = a\})<\infty.
$$

\begin{lemma}
\label{lem:pressure}
Let $X$ be a $G$-subshift. If $\varphi \in \Maps$ and $\phi \in \mathcal{C}(X)$ are such that
$$
\vertG{\varphi - S(\phi)} = 0,
$$
then $\hat{P}(\varphi)$ exists if and only if $P(\phi)$ exists. Moreover, if $P(\phi)$ exists, then $\hat{P}(\varphi) = P(\phi)$. 
\end{lemma}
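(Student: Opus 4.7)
The plan is to push the hypothesis $\vertG{\varphi - S(\phi)} = 0$ directly into the definition of $\hat{Z}_F$ via a cylinder-by-cylinder comparison of suprema. First I would set $\epsilon_F := \|\varphi(F) - S_F\phi\|_\infty$. Since $\varphi, S(\phi) \in \Maps$, their difference is in $\Maps$, so $\epsilon_F \leq |F|\,\vertsup{\varphi - S(\phi)} < \infty$ for every $F \in \FSet$. Moreover, the quantity $\epsilon_F/|F|$ is nonnegative, and $\vertG{\varphi - S(\phi)} = 0$ means precisely that $\limsup_{F \to G} \epsilon_F/|F| = 0$; combined with $\liminf_{F\to G} \epsilon_F/|F| \geq 0$, Lemma \ref{lem:limsup} yields $\lim_{F \to G} \epsilon_F/|F| = 0$.

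Next, I would compare the partition functions pointwise at the level of cylinders. For each $w \in X_F$ and each $x \in [w]$, the bound $|\varphi(F)(x) - S_F\phi(x)| \leq \epsilon_F$ gives
\begin{equation*}
\left|\sup \varphi(F)([w]) - \sup S_F\phi([w])\right| \leq \epsilon_F.
\end{equation*}
Exponentiating and summing over $w \in X_F$ yields
\begin{equation*}
e^{-\epsilon_F} Z_F(\phi) \;\leq\; \hat{Z}_F(\varphi) \;\leq\; e^{\epsilon_F} Z_F(\phi),
\end{equation*}
valid in $[0, \infty]$. In particular, $Z_F(\phi)$ is finite if and only if $\hat{Z}_F(\varphi)$ is finite. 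Assuming finiteness (otherwise neither pressure is defined), taking logarithms and dividing by $|F|$ gives
\begin{equation*}
\frac{\log Z_F(\phi)}{|F|} - \frac{\epsilon_F}{|F|} \;\leq\; \frac{\log \hat{Z}_F(\varphi)}{|F|} \;\leq\; \frac{\log Z_F(\phi)}{|F|} + \frac{\epsilon_F}{|F|}.
\end{equation*}

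Since $\epsilon_F/|F| \to 0$ as $F \to G$, the existence of either of the limits defining $P(\phi)$ or $\hat{P}(\varphi)$ forces the other to exist, and the squeeze inequality above gives equality of the limits. I do not anticipate a serious obstacle; the only mild subtlety is ensuring that $\vertG{\varphi - S(\phi)} = 0$ truly yields a genuine limit of $\epsilon_F/|F|$ rather than just a limit superior, which is handled by Lemma \ref{lem:limsup} thanks to nonnegativity.
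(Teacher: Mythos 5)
Your argument is correct and follows the same route as the paper's proof: both bound $\|\varphi(F)-S_F\phi\|_\infty$ using the hypothesis, deduce the cylinder-wise estimate $|\sup\varphi(F)([w])-\sup S_F\phi([w])|\leq\|\varphi(F)-S_F\phi\|_\infty$, sandwich the partition functions, and then take logarithms and divide by $|F|$. The paper phrases this with a fixed $\epsilon > 0$ and $(K,\delta)$-invariant sets, whereas you package the error as $\epsilon_F := \|\varphi(F)-S_F\phi\|_\infty$ and pass directly to $\lim_{F\to G}\epsilon_F/|F| = 0$ via Lemma~\ref{lem:limsup}; these are equivalent.
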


\begin{proof}
Due to the assumption on $\varphi$ and $\phi$, we have
$$
\lim_{F \to G} \left\|\frac{\varphi(F)}{|F|} - A_F(\phi) \right\|_\infty = \vertG{\varphi - S(\phi)} = 0.
$$

Fix $\epsilon > 0$ and consider $(K,\delta) \in \FSet \times \R_{\geq 0}$ such that
$$
\left\|\varphi(F) - S_F(\phi) \right\|_\infty < \epsilon|F|
$$
for every $(K,\delta)$-invariant set $F$. Then, for every $x \in [w]$,
$$
S_F(\phi)(x) - \epsilon|F| \leq \varphi(F)(x) \leq S_F(\phi)(x) + \epsilon|F|,
$$
so
$$
\sup S_F(\phi)([w]) - \epsilon|F| \leq \sup \varphi(F)([w]) \leq \sup S_F(\phi)([w]) + \epsilon|F|
$$
and
$$
Z_F(\phi) \exp(-\epsilon|F|) \leq \hat{Z}_F(\varphi) \leq Z_F(\phi) \exp(\epsilon|F|).
$$

After taking $\log(\cdot)$ and dividing by $|F|$, we obtain that
$$
- \epsilon \leq \frac{\log \hat{Z}_F(\varphi)}{|F|} - \frac{\log Z_F(\phi)}{|F|} \leq \epsilon,
$$
and since $\lim_{F \to G}$ is additive, the result follows.
\end{proof}

If $X$ is compact, it is known that $P(\phi)$ always exists \cite{ollagnier2007ergodic}. In \cite{beltran2024thermodynamic}, it was shown that when $X = \N^G$, if $\phi$ is exp-summable and uniformly continuous with finite oscillation, then $P(\phi)$ exists as well. Considering this, we have the following result.

\begin{proposition}
\label{prop:varprin}
Let $X$ be a $G$-subshift. If $\varphi \in \Maps$ is asymptotically additive, then there exists $\phi \in \mathcal{C}(X)$ such that
$$
\vertG{\varphi - S(\phi)} = 0.
$$
If $X$ is compact, then $\hat{P}(\varphi)$ exists and $\hat{P}(\varphi) = P(\phi)$, and if $X = \N^G$ and $\phi$ is exp-summable and uniformly continuous potential with finite oscillation, the same conclusion holds. 
\end{proposition}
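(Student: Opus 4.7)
The plan is that this proposition is essentially an application of the machinery already established in earlier sections, combined with two existence results for the pressure cited from the literature. Before starting, I would quickly verify that the setup fits the general framework of the paper: $\mathcal{C}(X)$ with the uniform norm is a Banach space (in particular, a complete semi-normed space), and the Koopman representation $\pi(g)\phi(x) = \phi(g^{-1}\cdot x)$ is isometric, hence uniformly bounded with $C_\pi = 1$. Thus $\Maps$, $\AdMaps$, $\AsMaps$, and the map $S$ are all defined in this context.

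For the first assertion, the strategy is to apply Theorem \ref{thm:cuneo} to the asymptotically additive set map $\varphi$, which yields an additive set map $\psi \in \AdMaps$ with
\[
\vertG{\varphi - \psi} = 0.
\]
By Proposition \ref{prop:iso1}, the map $S: \mathcal{C}(X) \to \AdMaps$ is a linear isomorphism, and its inverse is given by $S^{-1}(\psi) = \psi(\{1_G\})$. Defining $\phi := \psi(\{1_G\}) \in \mathcal{C}(X)$, one gets $S(\phi) = \psi$ and therefore $\vertG{\varphi - S(\phi)} = 0$, which is the desired identity.

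For the second assertion, the plan is to directly combine the first part with Lemma \ref{lem:pressure}. In the compact case, the classical existence of the pressure for continuous potentials on compact $G$-subshifts (see, e.g., \cite{ollagnier2007ergodic}) guarantees that $P(\phi)$ exists. Lemma \ref{lem:pressure} then immediately yields that $\hat{P}(\varphi)$ exists and equals $P(\phi)$. In the non-compact case $X = \N^G$, under the additional assumptions that $\phi$ is exp-summable and uniformly continuous with finite oscillation, the existence of $P(\phi)$ is furnished by the result from \cite{beltran2024thermodynamic} recalled just before the statement, and again Lemma \ref{lem:pressure} closes the argument.

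I do not foresee a substantive technical obstacle, as each ingredient is already in place; the proof is essentially a bookkeeping of the chain Theorem \ref{thm:cuneo} $\to$ Proposition \ref{prop:iso1} $\to$ Lemma \ref{lem:pressure}, with the existence of $P(\phi)$ (in either the compact or the $X = \N^G$ setting) supplied externally. The only mildly delicate point to articulate clearly is that, in the non-compact case, the hypothesis on regularity and summability is imposed on the realization $\phi$ produced by Theorem \ref{thm:cuneo}, rather than directly on $\varphi$; the statement is conditional on $\phi$ having these properties, so no further verification is required beyond invoking the cited existence theorem.
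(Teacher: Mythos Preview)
Your proposal is correct and matches the paper's own proof essentially step for step: the paper invokes Corollary~\ref{cor:cuneo} (which is just Theorem~\ref{thm:cuneo} combined with the identification $S: V \to \AdMaps$ of Proposition~\ref{prop:iso1}) to produce $\phi$, cites the existence of $P(\phi)$ in each case, and then applies Lemma~\ref{lem:pressure}. Your only difference is cosmetic---you unpack Corollary~\ref{cor:cuneo} into its two ingredients explicitly---and your remark about the conditional nature of the hypothesis on $\phi$ in the non-compact case is accurate.
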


\begin{proof}
By Corollary \ref{cor:cuneo}, there exists $\phi \in \mathcal{C}(X)$ such that $\vertG{\varphi - S(\phi)} = 0$. If $X$ is compact or $X = \N^G$ and $\phi$ is exp-summable and uniformly continuous potential with finite oscillation, then $P(\phi) = \lim_{F \to G} \frac{1}{|F|} \log Z_F(\phi)$ exists. By Lemma \ref{lem:pressure}, we conclude that $\hat{P}(\varphi)$ exists and $\hat{P}(\varphi) = P(\phi)$.
\end{proof}

\subsection{Variational principle and equilibrium states}

A fundamental result in thermodynamic formalism that relates the (topological) pressure with the (measure-theoretic) free energy is called \emph{variational principle}. It was first proved, in complete generality, by Walters \cite{walters-var} for $\Z$-actions in 1976. Ever since, generalizations for other group actions (e.g., \cite{paulo, Moulin, stepin}) and for non-additive versions of the pressure (e.g., \cite{cao, yan2013sub}) have been obtained. In this section, we prove that the pressure defined in \S \ref{sec_pres} satisfies the corresponding variational principle.

Recall that $\mathcal{P}_G(X)$ denotes the set of $G$-invariant Borel probability measures on $X$.

\begin{lemma}
\label{lem:varprin1}
Let $X$ be a $G$-subshift. If $\varphi \in \Maps$ and $\phi \in \mathcal{C}(X)$ are such that $$\vertG{\varphi - S(\phi)} = 0,$$
then
$$
\lim_{F \to G} \int{\frac{\varphi(F)}{|F|}}d\nu = \int{\phi}d\nu \quad \text{for all}~\nu \in \mathcal{P}_G(X).
$$

\end{lemma}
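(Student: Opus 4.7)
The plan is to exploit two ingredients: the $G$-invariance of $\nu$, which forces $\int A_F\phi\,d\nu$ to equal $\int\phi\,d\nu$ exactly for every $F$, and the hypothesis $\vertG{\varphi - S(\phi)} = 0$, which provides uniform control of $\varphi(F)/|F|$ against $A_F\phi$ as $F$ becomes more invariant.

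First I would verify the integrability issue. Since $\varphi \in \Maps$ and $V=\mathcal{C}(X)$, each $\varphi(F)$ is a continuous function on $X$; boundedness of $\vertsup{\varphi}$ yields $\|\varphi(F)\|_\infty \leq |F|\cdot\vertsup{\varphi} < \infty$, so $\varphi(F)$ is $\nu$-integrable. Similarly $\phi$ and each $A_F\phi$ are continuous and bounded, hence integrable.

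Next, using $G$-invariance of $\nu$, for every $g \in G$ we have $\int \pi(g^{-1})\phi\,d\nu = \int \phi(g\cdot x)\,d\nu(x) = \int\phi\,d\nu$, so by linearity
\begin{equation*}
\int A_F\phi\,d\nu \;=\; \frac{1}{|F|}\sum_{g\in F}\int \pi(g^{-1})\phi\,d\nu \;=\; \int\phi\,d\nu \quad \text{for every } F\in\FSet.
\end{equation*}
Therefore
\begin{equation*}
\left|\int\frac{\varphi(F)}{|F|}\,d\nu - \int\phi\,d\nu\right| \;=\; \left|\int\left(\frac{\varphi(F)}{|F|} - A_F\phi\right)\,d\nu\right| \;\leq\; \left\|\frac{\varphi(F)}{|F|} - A_F\phi\right\|_\infty,
\end{equation*}
since $\nu$ is a probability measure.

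Finally, taking the limit superior as $F \to G$ and invoking the hypothesis,
\begin{equation*}
\limsup_{F\to G}\left|\int\frac{\varphi(F)}{|F|}\,d\nu - \int\phi\,d\nu\right| \;\leq\; \limsup_{F\to G}\left\|\frac{\varphi(F)}{|F|} - A_F\phi\right\|_\infty \;=\; \vertG{\varphi - S(\phi)} \;=\; 0,
\end{equation*}
so the limit exists and equals $\int\phi\,d\nu$, as desired. There is no real obstacle here: the argument is essentially a direct application of $G$-invariance of $\nu$ together with the domination of the $\nu$-integral by the uniform norm; the only minor point worth stating explicitly is the verification that $\varphi(F)$ is integrable, which follows from $\vertsup{\varphi}<\infty$.
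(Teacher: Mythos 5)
Your proof is correct and follows essentially the same route as the paper's: use $G$-invariance of $\nu$ to get $\int S_F(\phi)\,d\nu = |F|\int\phi\,d\nu$, then dominate $\left|\int(\varphi(F)/|F| - A_F\phi)\,d\nu\right|$ by the uniform norm and let $F\to G$. Your version is slightly more explicit in spelling out the $G$-invariance calculation and the integrability check, which the paper leaves implicit, but the substance is identical.
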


\begin{proof}
Take $\nu \in \mathcal{P}_G(X)$ and fix $\epsilon > 0$. Consider $(K,\delta) \in \FSet \times \R_{\geq 0}$ such that
$$
\left\|\varphi(F) - S_F(\phi) \right\| < \epsilon|F|
$$
for every $(K,\delta)$-invariant set $F$. Therefore,
\begin{align*}
\left|\int{\varphi(F)}d\nu - |F|\int{\phi}d\nu \right|  &   = \left|\int{(\varphi(F) - S_F(\phi))}d\nu \right|  \\
                                                        &   \leq \int{\left\|\varphi(F) - S_F(\phi) \right\|}d\nu   \\
                                                        &   < \int{\epsilon|F|}d\nu = \epsilon|F|.
\end{align*}

Dividing by $|F|$, we obtain that
$$
\left|\int{\frac{\varphi(F)}{|F|}}d\nu - \int{\phi}d\nu \right|  <  \epsilon
$$
for every $(K,\delta)$-invariant set $F$. Since for every $\epsilon > 0$ we can find $(K,\delta)$ such that for every $(K,\delta)$-invariant set $F$ the equation above is satisfied, we conclude that
$$
\lim_{F \to G}\int{\frac{\varphi(F)}{|F|}}d\nu = \int{\phi}d\nu.
$$
\end{proof}

Given $\nu \in \mathcal{P}_G(X)$, the {\bf Kolmogorov-Sinai entropy} of $\nu$ is defined as
$$
h(\nu) = \lim_{F \to G} \frac{H_\nu(F)}{|F|},
$$
where $H_\nu(F)$ denotes the Shannon entropy of the partition $\{[w]: w \in X_F\}$. See \cite[\S 9]{kerr2016ergodic}.

\begin{theorem}[Variational principle]
\label{thm:varprin}
Let $X$ be a $G$-subshift. If $\varphi \in \Maps$ is asymptotically additive, then there exists $\phi \in \mathcal{C}(X)$ such that
$$
\vertG{\varphi - S(\phi)} = 0.
$$
If $X$ is compact, then
$$
\hat{P}(\varphi) = \sup\left\{h(\nu) + \lim_{F \to G} \int{\frac{\varphi(F)}{|F|}}d\nu: \nu \in \mathcal{P}_G(X)\right\},
$$
and if $X = \N^G$ and $\phi$ is exp-summable and uniformly continuous potential with finite oscillation, the same conclusion holds.
\end{theorem}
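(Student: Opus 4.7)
The proof is essentially a reduction to the classical variational principle via the additive realization provided by Theorem \ref{thm:cuneo}. The plan is to produce $\phi$, transfer the pressure and the integrals from $\varphi$ to $\phi$, and then invoke the known variational principle for single continuous potentials.

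First, the existence of $\phi \in \mathcal{C}(X)$ with $\vertG{\varphi - S(\phi)} = 0$ is immediate from Theorem \ref{thm:cuneo} (or equivalently Corollary \ref{cor:cuneo}), applied to the Koopman representation of $G$ on $\mathcal{C}(X)$. This is the first assertion, which is in fact already recorded in Proposition \ref{prop:varprin} and does not require compactness of $X$.

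Next, the two quantities in the desired identity will be shown to equal the corresponding classical quantities for $\phi$. On one hand, by Proposition \ref{prop:varprin}, the hypotheses on $X$ (either $X$ compact, or $X = \N^G$ with $\phi$ exp-summable, uniformly continuous, and of finite oscillation) guarantee that $P(\phi)$ exists, and hence that $\hat{P}(\varphi)$ exists with
\[
\hat{P}(\varphi) = P(\phi).
\]
On the other hand, by Lemma \ref{lem:varprin1}, for every $\nu \in \mathcal{P}_G(X)$ the limit $\lim_{F \to G} \int \tfrac{\varphi(F)}{|F|}\, d\nu$ exists and equals $\int \phi\, d\nu$. Therefore
\[
\sup_{\nu \in \mathcal{P}_G(X)} \left\{ h(\nu) + \lim_{F \to G} \int \frac{\varphi(F)}{|F|}\, d\nu \right\}
= \sup_{\nu \in \mathcal{P}_G(X)} \left\{ h(\nu) + \int \phi\, d\nu \right\}.
\]

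It then remains to invoke the classical variational principle for the continuous potential $\phi$ to identify the right-hand side above with $P(\phi)$. In the compact case, this is the amenable-group version of the variational principle proved by Moulin Ollagnier (see \cite{ollagnier2007ergodic,Moulin,stepin}). In the case $X = \N^G$ with $\phi$ exp-summable, uniformly continuous, and of finite oscillation, this is the non-compact variational principle of \cite{beltran2024thermodynamic}. Combining these identities yields the stated equality for $\hat{P}(\varphi)$.

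There is no real obstacle in this argument beyond checking that the hypotheses of the classical variational principle are met by the chosen $\phi$; this is automatic in the compact case and is precisely the hypothesis assumed on $\phi$ in the $\N^G$ case. The only mildly subtle point is that the additive realization $\phi$ is not unique (it is determined only up to $\wCob$ by Lemma \ref{lem:witness}), so one should verify that the statements used—existence of $P(\phi)$ and the classical variational principle—depend only on the class of $\phi$ modulo weak coboundaries, which is standard since both $P(\phi)$ and $h(\nu)+\int\phi\,d\nu$ are invariant under adding a coboundary, and extend by continuity in $\|\cdot\|_\infty$ to the weak-coboundary closure.
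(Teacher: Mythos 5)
Your proof follows the paper's argument exactly: construct an additive realization $\phi$ via Theorem \ref{thm:cuneo}/Corollary \ref{cor:cuneo}, identify $\hat P(\varphi)=P(\phi)$ via Proposition \ref{prop:varprin}, replace $\lim_{F\to G}\int \varphi(F)/|F|\,d\nu$ by $\int \phi\,d\nu$ via Lemma \ref{lem:varprin1}, and then invoke the classical amenable-group variational principle for $\phi$. The closing remark about independence modulo $\wCob$ is unnecessary here (and in the $X=\N^G$ case somewhat misleading, since exp-summability and finite oscillation are not coboundary-invariant); the theorem's hypotheses are placed on the particular $\phi$ one chooses, so the argument does not rely on such invariance.
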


\begin{proof}
Due to Lemma \ref{lem:varprin1}, it follows that
$$
\sup\left\{h(\nu) + \lim_{F \to G} \int{\frac{\varphi(F)}{|F|}}d\nu: \nu \in \mathcal{P}_G(X)\right\} = \sup\left\{h(\nu) + \int{\phi}d\nu: \nu \in \mathcal{P}_G(X)\right\}.
$$

If $X$ is compact or $X = \N^G$ and $\phi$ is exp-summable and uniformly continuous potential with finite oscillation (see \cite{beltran2024thermodynamic}), then
$$
\sup\left\{h(\nu) + \int{\phi}d\nu: \nu \in \mathcal{P}_G(X)\right\} = P(\phi).
$$

Due to Proposition \ref{prop:varprin}, $P(\phi) = \hat{P}(\varphi)$, and we conclude.
\end{proof}

\begin{remark}
More generally, we can say that if $\vertG{\varphi - S(\phi)} = 0$, $P(\phi) = \hat{P}(\varphi)$, and the variational principle holds for $\phi$, then the variational principle holds for $\varphi$.
\end{remark}

The variational principle provides a criterion to choose relevant invariant measures among the, sometimes very large, set of invariant measures. Indeed, measures attaining the supremum in Theorem \ref{thm:varprin} are relevant since they capture all the weighted disorder seen by the pressure. We say that $\mu \in \mathcal{P}_G(X)$ is an {\bf equilibrium state} for $\varphi$ if
$$
\hat{P}(\varphi) = h(\mu) + \lim_{F \to G} \int{\frac{\varphi(F)}{|F|}}d\mu.
$$

Observe that, if $\vertG{\varphi-S(\phi)}=0$, then
$$
\hat{P}(\varphi) = P(\phi) = h(\mu) + \int{\phi}d\mu = h(\mu) + \lim_{F \to G} \int{\frac{\varphi(F)}{|F|}}d\mu,
$$
so the equilibrium states for $\varphi$ coincide with those for $\phi$.

In the compact case, the equilibrium states for $\phi$ are guaranteed to exist due to the upper semi-continuity of the entropy map $\nu \mapsto h(\nu)$ and the compactness of $\mathcal{P}_G(X)$.

\subsection{Some applications to the one-dimensional case}

\label{subsec:examples-thermo}
Let $k \in \N$ and consider the $\Z$-full shift $\Z \acts \{1, \dots ,k\}^\Z$ where $\Z$ acts by translations and let $V = \mathcal{C}( \{1, \dots ,k\}^\Z)$. The Koopman representation is given by $\kappa: \Z \to \Isom(V)$, where $\pi(-n)f(x) = f(n \cdot x)$ for every $n \in \Z$ and $x \in  \{1, \dots ,k\}^\Z$. Given $W \subseteq V$, we know that, by Remark \ref{rem:tool}, every $f \in \overline{W}$ is an additive realization of some set map $\varphi: \mathcal{F}(\Z) \to V$ with $\varphi(\Z) \subseteq W$.

We stress that the fine thermodynamic properties of $\varphi$ can be reduced to that of its additive realizations. Indeed, if $\varphi$ has a Lipschitz additive realization, then we have the following consequences (see \cite{bowen, parry1990zeta}):
\begin{enumerate}
    \item There exists a unique equilibrium state $\mu$ for $\varphi$.
    \item The measure $\mu$ is Gibbs, satisfies the Central Limit Theorem and it has exponential decay of correlations with respect to Lipschitz observables.
    \item The pressure function $t \mapsto P(t\varphi)$ is real analytic.
\end{enumerate}

In the following examples we will study different settings that show that an additive realization of certain asymptotically additive set maps  $\varphi: \FSet \to V$ may belong to $W$ or not. The existence of an additive realization in $W$ reduces to study the question whether $(f + \wCob) \cap W$ is empty or not, where $f$ is any additive realization of $\varphi$ in $V$.

\begin{example} \label{ex:hol}
Let $W \subseteq V$ be the subset of Lipschitz functions. It is well-known that $\overline{W} = V$.
\begin{itemize}

\item $\witness \cap W = \emptyset$: By \cite[Theorem V.2.2]{israel}, there exists $f \in V$ with uncountably many ergodic equilibrium states. By Remark \ref{rem:tool}, since $\left<W\right> = W$, there exists a set map $\varphi: \mathcal{F}(\Z) \to W$ that is asymptotically additive relative to $W$ with $f$ as an additive realization. On the other hand, every Lipschitz function has a unique equilibrium state. By Lemma \ref{lem:witness}, $\witness \cap W = (f + \wCob) \cap W = \emptyset$, because the set of equilibrium states for each function in $f + \wCob$ is the same. Therefore, $\varphi$ is asymptotically additive relative to $W$ but has no additive realization in $W$.

\item $\witness \cap W = \emptyset$: By \cite{hofbauer}, there exists $f \in V$ for which the pressure function $t \mapsto P(tf)$ is not real-analytic. By Remark \ref{rem:tool}, since $\left<W\right> = W$, there exists a set map $\varphi: \mathcal{F}(\Z) \to W$ that is asymptotically additive relative to $W$ with $f$ as an additive realization. On the other hand, for every Lipschitz function the pressure function is real-analytic \cite[Chapter 4]{parry1990zeta}. By Lemma \ref{lem:witness}, $\witness \cap W = (f + \wCob) \cap W = \emptyset$, because the pressure functions coincides for each function in $f + \wCob$. Therefore, $\varphi$ is asymptotically additive relative to $W$ but has no additive realization in $W$.

\item $\witness \cap W \neq \emptyset$ and $\witness \cap V \setminus W \neq \emptyset$: Let $f \in W$ and $h \in \wCob \setminus W$. By Remark \ref{rem:tool}, since $\left<W\right> = W$, there exists a set map $\varphi: \mathcal{F}(\Z) \to W$ that is asymptotically additive relative to $W$ with $f+h$ as an additive realization. Notice that $f+h$ is not in $W$, so $\witness \cap V \setminus W \neq \emptyset$. On the other hand, $\witness \cap W \neq \emptyset$, because $\witness = ((f+h)+\wCob)$, $-h \in \wCob$, and $f = (f+h)-h \in W$. Therefore, $\varphi$ is asymptotically additive relative to $W$, and has additive realizations in $W$ and its complement. Notice that Theorem \ref{thm:cuneo-rel} guarantees that $\witness \subseteq \overline{W + \Cob}$ and, since $\witness \cap W \neq \emptyset$, it also implies that $\witness \cap V \setminus (W + \wCob) = \emptyset$, which is compatible with the fact that $\witness \cap V \setminus W \neq \emptyset$ because $W + \wCob \subsetneq \overline{W + \Cob}$.
\end{itemize}
\end{example}

\begin{remark}
We emphasize that questions about the regularity of additive realizations are more commonly addressed in the setting of almost additive sequences rather than asymptotically additive ones. In \cite{bricenoiommi2}, we discuss a notion of almost additivity that shows how the results of this article on asymptotic additivity are also pertinent to the study of that framework. Furthermore, we proved that the developed theory permits the extension of several ergodic theorems to certain non-additive settings.
\end{remark}

\subsection*{Acknowledgments}

We thank the anonymous referees for their helpful suggestions, which improved an earlier version of this work.

\subsection*{Funding}

R.B. was partially supported by ANID/FONDECYT Regular 1240508. G.I. was partially supported by ANID/FONDECYT Regular 1230100.

\bibliographystyle{abbrv}
\bibliography{references}

\end{document}